\begin{document}

\parskip0pt
\parindent10pt

\newenvironment{answer}{\color{Blue}}{\color{Black}}
\newenvironment{exercise}
{\color{Blue}\begin{exr}}{\end{exr}\color{Black}}

\theoremstyle{plain} 
\newtheorem{theorem}{Theorem}[section]
\newtheorem*{theorem*}{Theorem}
\newtheorem{prop}[theorem]{Proposition}
\newtheorem{porism}[theorem]{Porism}
\newtheorem{lemma}[theorem]{Lemma}
\newtheorem{cor}[theorem]{Corollary}
\newtheorem{conj}[theorem]{Conjecture}
\newtheorem{funfact}[theorem]{Fun Fact}
\newtheorem*{claim}{Claim}
\newtheorem{question}{Question}
\newtheorem*{conv}{Convention}

\theoremstyle{remark}
\newtheorem{exr}{Exercise}
\newtheorem*{rmk}{Remark}

\theoremstyle{definition}
\newtheorem{defn}{Definition}
\newtheorem{example}{Example}

\renewcommand{\mod}[1]{{\ifmmode\text{\rm\ (mod~$#1$)}\else\discretionary{}{}{\hbox{ }}\rm(mod~$#1$)\fi}}

\newcommand{\ns}{\mathrel{\unlhd}}
\newcommand{\tr}{\text{tr}}
\newcommand{\wt}[1]{\widetilde{#1}}
\newcommand{\wh}[1]{\widehat{#1}}
\newcommand{\cbrt}[1]{\sqrt[3]{#1}}
\newcommand{\floor}[1]{\left\lfloor#1\right\rfloor}
\newcommand{\abs}[1]{\left|#1\right|}
\newcommand{\ds}{\displaystyle}
\newcommand{\nn}{\nonumber}
\newcommand{\re}{\text{Re}}
\renewcommand{\ker}{\textup{ker }}
\renewcommand{\char}{\textup{char }}
\renewcommand{\Im}{\textup{Im }}
\renewcommand{\Re}{\textup{Re }}
\newcommand{\area}{\textup{area }}
\newcommand{\isom}
    {\ds \mathop{\longrightarrow}^{\sim}}
\renewcommand{\ni}{\noindent}
\renewcommand{\bar}{\overline}
\newcommand{\morph}[1]
    {\ds \mathop{\longrightarrow}^{#1}}

\newcommand{\Gal}{\textup{Gal}}
\newcommand{\Aut}{\textup{Aut}}
\newcommand{\Crypt}{\textup{Crypt}}
\newcommand{\disc}{\textup{disc}}
\newcommand{\sgn}{\textup{sgn}}
\newcommand{\del}{\partial}

\newcommand{\mattwo}[4]{
\begin{pmatrix} #1 & #2 \\ #3 & #4 \end{pmatrix}
}

\newcommand{\vtwo}[2]{
\begin{pmatrix} #1 \\ #2 \end{pmatrix}
}
\newcommand{\vthree}[3]{
\begin{pmatrix} #1 \\ #2 \\ #3 \end{pmatrix}
}
\newcommand{\vcol}[3]{
\begin{pmatrix} #1 \\ #2 \\ \vdots \\ #3 \end{pmatrix}
}

\newcommand*\wb[3]{%
  {\fontsize{#1}{#2}\usefont{U}{webo}{xl}{n}#3}}

\newcommand\myasterismi{%
  \par\bigskip\noindent\hfill
  \wb{10}{12}{I}\hfill\null\par\bigskip
}
\newcommand\myasterismii{%
  \par\bigskip\noindent\hfill
  \wb{15}{18}{UV}\hfill\null\par\medskip
}
\newcommand\myasterismiii{%
  \par\bigskip\noindent\hfill
  \wb{15}{18}{z}\hfill\null\par\bigskip
}

\newcommand{\one}{{\rm 1\hspace*{-0.4ex} \rule{0.1ex}{1.52ex}\hspace*{0.2ex}}}

\renewcommand{\v}{\vec{v}}
\newcommand{\w}{\vec{w}}
\newcommand{\e}{\vec{e}}
\newcommand{\m}{\vec{m}}
\renewcommand{\u}{\vec{u}}
\newcommand{\vecx}{\vec{e}_1}
\newcommand{\vecy}{\vec{e}_2}
\newcommand{\vo}{\vec{v}_1}
\newcommand{\vt}{\vec{v}_2}

\renewcommand{\o}{\omega}
\renewcommand{\a}{\alpha}
\renewcommand{\b}{\beta}
\newcommand{\g}{\gamma}
\newcommand{\sig}{\sigma}
\renewcommand{\d}{\delta}
\renewcommand{\t}{\theta}
\renewcommand{\k}{\kappa}
\newcommand{\ve}{\varepsilon}
\newcommand{\op}{\text{op}}

\newcommand{\Z}{\mathbb Z}
\newcommand{\ZN}{\Z_N}
\newcommand{\Q}{\mathbb Q}
\newcommand{\N}{\mathbb N}
\newcommand{\R}{\mathbb R}
\newcommand{\C}{\mathbb C}
\newcommand{\F}{\mathbb F}
\newcommand{\T}{\mathbb T}
\renewcommand{\H}{\mathbb H}
\newcommand{\B}{\mathcal B}
\newcommand{\p}{\mathcal P}
\renewcommand{\P}{\mathbb P}
\renewcommand{\r}{\mathcal R}
\renewcommand{\c}{\mathcal C}
\newcommand{\h}{\mathcal H}
\newcommand{\f}{\mathcal F}
\newcommand{\s}{\mathcal S}
\renewcommand{\L}{\mathcal L}
\newcommand{\lam}{\lambda}
\newcommand{\E}{\mathcal E}
\newcommand{\Ex}{\mathbb E}
\newcommand{\D}{\mathbb D}
\newcommand{\oh}{\mathcal O}
\newcommand{\n}{\mathcal N}
\newcommand{\I}{\mathcal I}

\newcommand{\diam}{\text{ diam}}
\newcommand{\vol}{\text{vol}}
\newcommand{\Int}{\text{Int}}

\newcommand{\Span}{\text{span}}
\newcommand{\AP}{\text{AP}}

\newcommand{\MoM}{\text{MoM}}

\newcommand{\0}{{\vec 0}}

\newcommand{\ignore}[1]{}

\newcommand{\poly}[1]{\textup{Poly}_{#1}}

\newcommand*\circled[1]{\tikz[baseline=(char.base)]{
            \node[shape=circle,draw,inner sep=2pt] (char) {#1};}}

\newcommand*\squared[1]{\tikz[baseline=(char.base)]{
            \node[shape=rectangle,draw,inner sep=2pt] (char) {#1};}}

\title{Freezing transition and moments of moments of the Riemann zeta function} 

\author{Michael J. Curran}
\email{Michael.Curran@maths.ox.ac.uk}
\address{Mathematical Institute, University of Oxford, Oxford, OX2 6GG, United Kingdom.}

\maketitle

\begin{abstract}
Moments of moments of the Riemann zeta function, defined by
\[
\text{MoM}_T(k,\beta) := \frac{1}{T}\int_T^{2T} \Bigg(\int\displaylimits_{
|h|\leq (\log T)^\theta}|\zeta(\tfrac{1}{2} + i t + ih)|^{2\beta}~dh\Bigg)^k~dt
\]
where $k,\beta \geq 0$ and $\theta > -1$, were introduced by Fyodorov and Keating \cite{FK} when comparing extreme values of zeta in short intervals to those of characteristic polynomials of random unitary matrices.
We study the $k = 2$ case as $T \rightarrow \infty$ and obtain sharp upper bounds for $\MoM_T(2,\b)$ for all real $0\leq \b \leq 1$ as well as lower bounds of the conjectured order for all $\b \geq 0$.
In particular, we show that the second moment of moments undergoes a freezing phase transition with critical exponent $\b = \tfrac{1}{\sqrt{2}}$.
\end{abstract}

\section{Introduction}

Moments of the Riemann zeta function and other $L$-functions are classical objects in analytic number theory utilized in the study large values and the value distribution of $L$-functions.
It has long been conjectured that for $\b \geq 0$ there is an asymptotic
\[
\frac{1}{T}\int_T^{2T} |\zeta(\tfrac{1}{2}+ i t)|^{2\b} \sim C_\b (\log T)^{\b^2}
\]
as $T \rightarrow \infty$ for certain constants $C_\b$.
The values of the constants $C_\b$ were conjectured recently in the work of Keating and Snaith \cite{KS} using analogies between the Riemann zeta function and characteristic polynomials of random matrices.
These asymptotics have only been established in the case of $\b = 1$ due to work of Hardy and Littlewood \cite{HL} and the case of $\b = 2$ in work of Ingham \cite{InghamMV}, however.
Upper bounds of the correct order are known for real $\b \leq 2$ due to work of Heap, Radziwiłł and Soundararajan \cite{HRS}, and are known for all $\b \geq 0$ assuming the Riemann hypothesis due to Harper \cite{Harper}.
Lower bounds are known unconditionally for all $\b \geq 0$ due to  Heap and Soundararajan \cite{HS}.

While relatively little is known unconditionally about the moments of zeta for $\b >  2$, in recent literature there has been interest in moments and the value distribution of zeta in short intervals on the critical line. 
Instead of studying zeta along an interval $[T,2T]$, one instead chooses a point $t\in [T,2T]$ uniformly at random and then tries to understand the typical behavior of the moments or maximum of zeta in the shorter interval $[t-1, t + 1]$, say.
The question of the maximum of zeta in short intervals is the subject of the Fyodorov-Hiary-Keating conjecture \cite{FHK, FK}, which predicts that if $t$ is chosen uniformly at random from $[T,2T]$ then as $T \rightarrow \infty$
\begin{equation}\label{eqn:FHKConj}
\P\left(\max_{|h-t|\leq 1}  |\zeta(\tfrac{1}{2} + i t + i h)| > e^y\frac{\log T }{(\log \log T)^{3/4}}  \right) \rightarrow 1 - F(y),
\end{equation}
where $F(y)$ is a cumulative distribution function with tail decay $1-F(y) \sim ye^{-2y}$ as $y\rightarrow \infty$.
The fraction $\frac{3}{4}$ and the tail decay are manifestations of correlations of nearby shifts of zeta. 
If these correlations were not present, probabilistic heuristics would predict a $\tfrac{1}{4}$ in place of $\tfrac{3}{4}$ and a tail decay of $e^{-2y}$ instead of $ye^{-2y}$.
An analogous conjecture is known to hold for the circular-$\b$ ensemble in the random matrix setting-- refer to the work Paquette and Zeitouni \cite{PZCBE, PZCUE}.

There has been a lot of recent progress toward the conjecture (\ref{eqn:FHKConj}).
Conditionally on the Riemann hypothesis, Najnudel showed  with probability $1-o_\ve(1)$ that   $\max_{|h-t|\leq 1}  |\zeta(\tfrac{1}{2} + i t + i h)|$ is bounded by $(\log T)^{1+ \ve}$, as well corresponding upper and lower bounds for the imaginary part of $\log \zeta$ \cite{Najnudel}.
Unconditionally Arguin, Belius, Bourgade, Radziwiłł, and Soundararajan \cite{FHKLeadingOrder} proved  that $\max_{|h-t|\leq 1}  |\zeta(\tfrac{1}{2} + i t + i h)|$  lies in the interval $[(\log T)^{1-\ve},(\log T)^{1+ \ve}]$ with probability $1-o_\ve(1)$.
Harper \cite{HarperPartition} then refined the upper bound showing that with probability $1-o(1)$ 
\[
\max_{|h-t|\leq 1}  |\zeta(\tfrac{1}{2} + i t + i h)| \leq \frac{\log T}{(\log \log T)^{3/4 + o(1)}}.
\]
Most recently Arguin, Bourgade, and Radziwiłł established the upper bound in the Fyodorov-Hiary-Keating conjecture \cite{FHKUpper}. 
Indeed, one consequence of \cite{FHKUpper} is  that for fixed $y$ the probability in (\ref{eqn:FHKConj}) is $O(y e^{-2y})$ as $T \rightarrow \infty$.

Moments of zeta in short intervals were also recently studied Fyodorov and Keating \cite{FK} followed by Arguin, Ouimet, and Radziwiłł \cite{AOR}.
The problem now is to understand the behavior of the short moments or the partition function 
\begin{equation}\label{eq:MSI}
\int_{|h| \leq (\log T)^\t} |\zeta(\tfrac{1}{2} + i t + i h)|^{2\b} dt
\end{equation}
where $\t > -1$ and $t$ is chosen uniformly at random from $[T,2T]$. 
The typical behavior of these short moments was investigated by Fyodorov and Keating \cite{FK} when $\t = 0$, and their behavior for all $\t > -1$ was determined by  Arguin, Ouimet, and Radziwiłł \cite{AOR}.
One particularly interesting conclusion of the analysis is that there is a so-called freezing transition as one varies the parameter $\b$.
More precisely, for fixed $\t > -1$ there is a critical value $\b_c(\t)$ such that when $\b < \b_c(\t)$, the moments (\ref{eq:MSI}) are governed by the typical values of $\zeta(\tfrac{1}{2} + i t)$,
yet when $\b > \b_c(\t)$ the moments (\ref{eq:MSI}) are dominated by just a few large values of $\zeta(\tfrac{1}{2} + i t)$.

In this paper, we will not be interested in the typical values of the moments (\ref{eq:MSI}), but instead larger values.
More precisely, we will investigate the $k = 2$ case of the so-called \emph{moments of moments} of zeta:
\[
\MoM_T(k,\b) := \frac{1}{T}\int_T^{2T} \Bigg(~\int\displaylimits_{
|h|\leq (\log T)^\theta}|\zeta(\tfrac{1}{2} + i t + ih)|^{2\b}~dh\Bigg)^k~dt
\]
where $\b \geq 0$ and $\t > -1$.
In other terms, we  study the variance of the moments (\ref{eq:MSI}).
The problem of moments of moments was first studied by Fyodorov, Hiary, and Keating in the random matrix setting \cite{FHK, FK}, who conjectured asymptotics for
\[
\MoM_{U(N)}(k,\b) := \int_{U(N)}\left(\frac{1}{2\pi} \int_{0}^{2\pi} |P_{N}(A,\t)|^{2\b} d\t \right)^k dA
\]
as $N\rightarrow \infty$ where $dA$ is the Haar measure on the unitary group $U(N)$ of size $N$ and $P_N(A,\t) := \det(I - Ae^{-i\t})$ is the characteristic polynomial of $A$.
There has been a lot of recent progress on these conjectures, and it has now been proven that (see \cite{AKUnitary, BKUnitary, CK, Fahs}) for $\b \geq 0$ and $k \in \N$ that
\[
\MoM_{U(N)}(k,\b) \sim
\begin{cases}
A(k,\b) N^{k\b^2}   & k < 1/\b^2\\
B(k,\b) N^{k^2\b^2 - k + 1}   & k > 1/\b^2 
\end{cases}
\]
for certain constants $A(k,\b), B(k,\b)$ as $N \rightarrow\infty$.
For $k \geq 2$, the transition at the critical exponent $\b=1 / \sqrt{k}$ was analyzed by Keating and Wong \cite{KW}, who showed
\[
\MoM_{U(N)}(1/\b^2,\b) \sim  D_\b N \log N
\]
for certain explicit constants $D_\b$ that also appear in the theory of Gaussian multiplicative chaos.

In the case $\t = 0$, the random matrix philosophy of Keating and Snaith \cite{KS} leads us to the prediction that if  $k \geq 2$ is an integer and $\b \geq 0$ then
\[
\MoM_{T}(k,\b) \sim
\begin{cases}
A'(k,\b) (\log T)^{k\b^2}   & k < 1/\b^2\\
B'(k,\b) (\log T) (\log\log T )  & k = 1/\b^2\\
C'(k,\b) (\log T)^{k^2\b^2 - k + 1}   & k > 1/\b^2
\end{cases}
\]
for certain constants $A'(k,\b), B'(k,\b), C'(k,\b) $ as $T \rightarrow\infty$.
These conjectures were also shown to hold for $\b,k \in \N$ by Bailey and Keating \cite{BKZetaMoM} assuming the standard conjectures on shifted moments of zeta due to Conrey, Farmer, Keating, Rubinstein, and Snaith \cite{CFKRS}.
These asymptotics do not however include the critical or subcritical regimes where $\b \leq 1/\sqrt{k}$.

The goal of this paper is to provide upper and lower bounds of the expected order of magnitude in the case $k = 2$ and non-integer $\b$. 
In particular we will show that there is indeed a phase transition at $\b = \tfrac{1}{\sqrt{2}}$, although this transition occurs in the sub leading order terms when $\t > 0$.
The starting point of the method is to rewrite the integral as
\begin{equation}\label{eqn:MoMTensor}
\MoM_T(2,\b) = \frac{1}{T}\iint\displaylimits_{|h_1|,|h_2|\leq (\log T)^\t}  \int_T^{2T} |\zeta(\tfrac{1}{2} + i t + ih_1)\zeta(\tfrac{1}{2} + i t + ih_2)|^{2\b} ~dt ~dh_1 ~dh_2.
\end{equation}
Therefore we can understand the $\MoM_T(2,\b)$ by understanding the correlation structure of zeta in short intervals and then integrating over all of the possible shifts $h_1, h_2$.
Morally, we will find that if $|h_1 - h_2| \ll 1/\log T$ then 
\[
\frac{1}{T}\int_T^{2T} |\zeta(\tfrac{1}{2} + i t + ih_1)\zeta(\tfrac{1}{2} + i t + ih_2)|^{2\b} ~dt  \approx 
(\log T)^{4 \b^2} ,
\]
while when $|h_1 - h_2| \gg 1/\log T$ that 
\[
\frac{1}{T}\int_T^{2T} |\zeta(\tfrac{1}{2} + i t + ih_1)\zeta(\tfrac{1}{2} + i t + ih_2)|^{2\b} ~dt  \approx 
(\log T)^{2 \b^2} |\zeta(1 + i (h_1 - h_2))|^{2\b^2}.
\]
So $\zeta(\tfrac{1}{2} + i t + ih_1)$ and $\zeta(\tfrac{1}{2} + i t + ih_2)$ are almost perfectly correlated when $h_1$ and $h_2$ are close, and they decorrelate for more distant shifts. 
Indeed one may see that the factor of $ |\zeta(1 + i (h_1 - h_2))|^{2\b^2}$ is bounded on average by computing the moments of zeta on the one line.
A similar correlation structure for $\log |\zeta(\tfrac{1}{2} + i t)|$ was proven by Bourgade \cite{Bourgade}, and this correlation structure is what gives rise to the phase transition at $\b = 1/\sqrt{2}$.
However as we look at larger shifts, we also see that a large value of $\zeta(1 + i h)$ on the one line will cause
$\zeta(\tfrac{1}{2} + i t)$ and $\zeta(\tfrac{1}{2} + i t + ih)$ to have an unusually large average correlation.

For $\b = 1$, an asymptotic formula for $\MoM_T(2,1)$ as $T \rightarrow \infty$ is obtained in Kovaleva's thesis, which also studies the fourth moment of zeta with shifts as large as $T^{6/5-\ve}$ \cite{Kovaleva}.
However, this asymptotic does not address the particularly interesting critical case  $\b = 1/\sqrt{2}$.
Assuming RH, Chandee \cite{Chandee} studied more general shifted moments
\[
M_{\bm{\a},\bm{\b}}(T) = \int_T^{2T}  \prod_{k = 1}^m |\zeta(\tfrac{1}{2} + i (t + \a_k))|^{2 \b_k} dt
\]
where $\bm{\a} = \bm{\a}(T) =  (\a_1, \ldots, \a_m)$ and $\bm{\b} = (\b_1 \ldots , \b_m)$ satisfy $|\a_k| \leq T/2$, $\b_k\geq 0$, and $|\a_j - \a_k| = O(1)$. Under these assumptions Chandee obtained upper bounds of the conjectured order up to a $(\log T)^{\ve}$ loss.  Due to this loss, we are unable to detect the $\log \log T$ factor at the phase transition; however the work of Chandee could still be used to give upper bounds of the conjectured order up to a factor of $(\log T)^\ve$ for $\MoM_T(k,\b)$ when $-1 < \t \leq 0$, $\b \geq 0$ and $k \in \N$.
In the special case where $\bm{\b} = (\b, \b)$, Ng, Shen, and Wong \cite{NSW} removed the $(\log T)^\ve$ loss using the work of Harper \cite{Harper}.
This work is sufficient to obtain upper bounds of the conjectured order for $\MoM_T(2,\b)$ when $-1 < \t \leq 0$ for all $\b \geq 0$.
Ng, Shen, and Wong also gave upper bounds in the larger regime where $|\a_1 - \a_2| \leq T^{3/5}$; however these bounds are not sharp when $|\a_1 - \a_2|$ is unbounded and result in a loss of order $\log \log T$ when bounding $\MoM_T(2,\b)$ with $\t > 0$.

To unconditionally obtain upper bounds for $\MoM_T(2,\b)$ when $\b$ is not an integer, we will use a principle pioneered in works of Heap, Radziwiłł, and Soundararajan \cite{HRS} and Radziwiłł and Soundararajan \cite{RS-EC}; to obtain lower bounds we will use a principle in the works of Heap and Soundararajan \cite{HS} and Radziwiłł and Soundararajan \cite{RS-LB}.
Overall, these works demonstrate that if one can asymptotically evaluate the twisted $2k^{\text{th}}$ moment of a family of $L$-functions for some $k > 0$, then one can obtain upper bounds of the correct order for the  $2\b^{\text{th}}$ moment in that family when $\b \leq k$ as well as sharp lower bounds for for all $\b \geq 0$. This method has been used to give conjecturally sharp upper and lower bounds for moments of several families of $L$-functions; see \cite{Curran, HRS, HS, RS-LB, RS-EC} for example.

The first task of this paper will be to establish the corresponding twisted second and fourth moment formulae with shifts of order $(\log T)^\t$.
The second moment can be handled using a slight modification of the techniques used in Young's proof of Levinson's theorem \cite{YoungSL}, and the fourth moment will follow from some minor modifications to the work of Hughes and Young on the twisted fourth moment \cite{HYTwisted4}.
With these formulae in hand, we may then follow the principle of \cite{HRS} and \cite{RS-EC} to establish sharp upper bounds for $\MoM_T(2,\b)$ for all $\b \leq 1$, and lower  bounds for all $\b \geq 0$.

\begin{theorem}\label{thm:mainUpperBounds}
Let $-1 < \theta \leq 0$. If $0 \leq \beta < 1/\sqrt{2}$ then
\[
\MoM_T(2,\b) \ll 
\frac{1}{1-2\b^2} (\log T)^{2\b^2(1-\t) + 2\t}.
\]
For $1/\sqrt{2} < \b \leq 1$
\[
\MoM_T(2,\b) \ll 
\frac{1}{2\b^2-1} (\log T)^{4\b^2 + \t - 1},
\]
and for $\b = 1/\sqrt{2}$ 
\[
\MoM_T(2,\tfrac{1}{\sqrt{2}})  \ll (1 + \t) (\log T)^{1 + \t} (\log\log T).
\]
Instead if $\t > 0$, then if $0\leq \b \leq \min(\sqrt{(1+\t)/2}, 1)$
\[
\MoM_T(2,\b) \ll 
(\log T)^{2\b^2 + 2\t} + \frac{1}{1-2\b^2}(\log T)^{2\b^2 + \t},
\]
where the sub leading term is replaced by $(1+\t)(\log T)^{1+\t}(\log\log T)$ if $\b = 1/\sqrt{2}$. 
In the case $\min((\sqrt{1+\t})/2,1)\leq \b \leq 1$ we have
\[
\MoM_T(2,\b) \ll 
\frac{1}{2\b^2-1}(\log T)^{4\b^2 -1 + \t}.
\]
All implied constants are absolute assuming $T$ is taken sufficiently large in terms of $\theta$.
\end{theorem}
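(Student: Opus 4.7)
The plan is to start from the decomposition (\ref{eqn:MoMTensor}) and adapt the Heap--Radziwi\l\l--Soundararajan \cite{HRS} / Radziwi\l\l--Soundararajan \cite{RS-EC} principle to the two-shift setting. For each shift $h$, one approximates $\zeta(\tfrac12+it+ih)^\b$ by a product of short Dirichlet polynomials $\mathcal{N}_\ell(t,h)$ supported on primes in dyadic windows up to $X := T^{1/(\log\log T)^3}$. Splitting $[T,2T]$ into a ``good set'' where all these approximations are valid, and a ``bad set'' of measure $\ll T (\log T)^{-A}$ (via standard mean-square estimates on the $\mathcal{N}_\ell$), yields on the good set
\[
|\zeta(\tfrac12+it+ih_1)\zeta(\tfrac12+it+ih_2)|^{2\b} \ll |N(t,h_1)N(t,h_2)|^{2}
\]
for a Dirichlet polynomial $N$ of length a small power of $T$ built from the $\mathcal{N}_\ell$, while the bad set is absorbed via Cauchy--Schwarz and the twisted fourth moment (Theorem~\ref{thm:twisted4th}).

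After integrating the good-set bound over $t$ by orthogonality and invoking Theorems~\ref{thm:twisted2nd} and \ref{thm:twisted4th} for the shift-twisted polynomial mean values, the problem reduces to an integral over $(h_1,h_2)$. Substituting $u=h_1-h_2$ and $v=h_1+h_2$, the double integral becomes a one-dimensional integral of the form $(\log T)^\t \int_0^{2(\log T)^\t} f(u)\, du$, where $f(u)$ captures the correlation structure: $f(u) \asymp (\log T)^{4\b^2}$ for $|u|\lesssim 1/\log T$ and $f(u) \asymp (\log T)^{2\b^2}|\zeta(1+iu)|^{2\b^2}$ for larger shifts, matching the heuristic stated in the introduction.

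The final step is to split the $u$-integral at the scales $|u|=1/\log T$ and $|u|=1$. On $|u|\le 1/\log T$ one uses $|\zeta(1+iu)|\ll \log T$ to contribute the ``close shifts'' term $(\log T)^{4\b^2+\t-1}/(2\b^2-1)$. On $1/\log T \le |u|\le\min(1,(\log T)^\t)$ the bound $|\zeta(1+iu)|\asymp|u|^{-1}$ reduces to $\int|u|^{-2\b^2}\,du$, producing the three regimes separated by $\b=1/\sqrt{2}$: convergence at the upper endpoint for $\b<1/\sqrt 2$ (giving $1/(1-2\b^2)$), divergence at the lower endpoint for $\b>1/\sqrt 2$ (giving $1/(2\b^2-1)$), and a $\log\log T$ factor at $\b=1/\sqrt 2$. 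When $\t>0$, on $1\le|u|\le(\log T)^\t$ the classical mean value $\int_1^U|\zeta(1+iu)|^{2\b^2}\,du\ll U$ contributes the additional $(\log T)^\t$ responsible for the ``product'' term $(\log T)^{2\b^2+2\t}$. Matching these three contributions to the sub-cases in the theorem closes the argument.

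The principal obstacle is establishing sufficient shift-uniformity in the twisted fourth moment: the main terms of Theorem~\ref{thm:twisted4th} must encode the correlation factor $|\zeta(1+i(h_1-h_2))|^2$ down to the scale $|h_1-h_2|\sim 1/\log T$, rather than merely yielding the uniform bound $(\log T)^{4\b^2}$ one would get from the coincident-shift case. A secondary technical point is constructing the Dirichlet polynomial proxies $\mathcal{N}_\ell(t,h)$ in a manner compatible with both shifts, so that the ``good set'' estimate holds uniformly in $h_1, h_2 \in [-(\log T)^\t, (\log T)^\t]$; this is in principle standard given the availability of the shifted moment asymptotics, but requires careful tracking of shift dependence through the mean-square estimates.
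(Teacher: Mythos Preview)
Your endpoint analysis is right and matches the paper: the reduction to a one-dimensional integral in $u=h_1-h_2$, the splitting at the scales $1/\log T$ and $1$, the trichotomy at $\b=1/\sqrt 2$ coming from $\int |u|^{-2\b^2}\,du$, and the extra $(\log T)^\t$ from the mean value of $|\zeta(1+iu)|^{2\b^2}$ on $[1,(\log T)^\t]$ when $\t>0$ are exactly how the paper concludes.

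The gap is in your description of the HRS step. The claim that on a good set one has
\[
|\zeta(\tfrac12+it+ih_1)\zeta(\tfrac12+it+ih_2)|^{2\b}\ \ll\ |N(t,h_1)N(t,h_2)|^{2}
\]
for a short Dirichlet polynomial $N$ is not available unconditionally: a pointwise majorant of $|\zeta|^\b$ by a short polynomial would have to vanish near the zeros of $\zeta$, which it cannot. What the HRS mechanism actually gives (this is Proposition~\ref{prop:interpolation} in the paper) is the interpolation inequality obtained from Young's inequality,
\[
|\zeta(s)\zeta(w)|^{2\b}\ \ll\ |\zeta(s)\zeta(w)|^{2}\prod_j|\n_j(s;\b-1)\n_j(w;\b-1)|^2\ +\ \prod_j|\n_j(s;\b)\n_j(w;\b)|^2\ +\ (\text{rare-event terms}),
\]
with the ``bad set'' encoded by the high powers of $\p_v$ in the last group. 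The twisted fourth moment (Theorem~\ref{thm:twisted4th}) is therefore used for the \emph{main} term, to evaluate $\int_T^{2T}|\zeta(s)\zeta(w)|^2\prod_j|\n_j(\cdot\,;\b-1)|^4\,dt$ and extract the correlation factor $|\zeta(1+i(h_1-h_2))|^{2}\,|\zeta(1+1/\log T_\ell+i(h_1-h_2))|^{2\b^2-2}$ (Proposition~\ref{prop:upperBoundTwistedFourth}); it is not merely a device for absorbing a small exceptional set. The pure Dirichlet-polynomial term $\prod_j|\n_j(\cdot\,;\b)|^2$ is handled by the mean value theorem alone (Proposition~\ref{prop:upperBoundDPMV}) and produces the complementary factor $|\zeta(1+1/\log T_\ell+i(h_1-h_2))|^{2\b^2}$.

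So the correction is localized: replace your good-set polynomial majorant by the pointwise interpolation inequality above, and recognize that the twisted fourth moment carries the main term rather than an error. With that substitution, the rest of your plan (integrate over $u$, split at $1/\log T$ and $1$, read off the regimes) is exactly the paper's argument.
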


\begin{rmk}
If one assumes RH and that $\t \leq 0$, the restriction $\b \leq 1$ may be removed using the work of Ng, Shen, and Wong \cite{NSW}. 
For $\theta > 0$ and $0\leq \b \leq \min(\sqrt{(1+\t)/2}, 1)$, the sub leading order term can be removed provided one that $T$ is sufficiently large in terms of $\b$ or if one allows the implicit constant to depend on $\b$.
This technicality is necessary because the phase transition occurs in  the sub leading order terms of the second moment of moments.
\end{rmk}
\noindent

\begin{theorem}\label{thm:mainLowerBound}
Let $-1< \theta \leq 0$. If $0 \leq \beta < 1/\sqrt{2}$ then
\[
\MoM_T(2,\b) \gg_{\b}
(\log T)^{2\b^2(1-\t) + 2\t}.
\]
For $\b \geq 1/\sqrt{2}$
\[
\MoM_T(2,\b) \gg_{\b}
(\log T)^{4\b^2 + \t - 1},
\]
and for $\b = 1/\sqrt{2}$ 
\[
\MoM_T(2,\tfrac{1}{\sqrt{2}})  \gg (1 + \t) (\log T)^{1 + \t} (\log\log T).
\]
Instead if $\t > 0$, then if $0\leq \b \leq \sqrt{(1+\t)/2}$
\[
\MoM_T(2,\b) \gg_\b 
(\log T)^{2\b^2 + 2\t}.
\]
When $\b \geq \sqrt{(1+\t)/2}$, we have
\[
\MoM_T(2,\b) \gg_\b
\frac{1}{2\b^2-1}(\log T)^{4\b^2 -1 + \t},
\]
where all implied constants  depend only on $\b$ provided $T$ is large enough in terms of $\t$.
\end{theorem}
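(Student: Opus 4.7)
The plan is to adapt the Heap--Radziwiłł--Soundararajan lower bound method \cite{HRS, RS-EC} to the product $\zeta(\tfrac{1}{2}+it+ih_1)\zeta(\tfrac{1}{2}+it+ih_2)$ and then integrate the resulting pointwise bound over the shifts $h_1,h_2$. Starting from (\ref{eqn:MoMTensor}), the first task is to prove, uniformly in $|h_j|\leq (\log T)^\theta$, a pointwise lower bound for
\[
I(h_1,h_2) := \frac{1}{T}\int_T^{2T}\big|\zeta(\tfrac{1}{2}+it+ih_1)\zeta(\tfrac{1}{2}+it+ih_2)\big|^{2\beta}\,dt
\]
matching the conjectural correlation structure described after (\ref{eqn:MoMTensor}): order $(\log T)^{4\beta^2}$ on the diagonal $|h_1-h_2|\lesssim 1/\log T$ and order $(\log T)^{2\beta^2}|\zeta(1+i(h_1-h_2))|^{2\beta^2}$ off the diagonal.

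To produce this pointwise bound I would introduce a Dirichlet polynomial $A(s;h_1,h_2)$ of length $T^\delta$ for a small $\delta>0$, built à la \cite{HRS} as a product of short Euler-type factors over disjoint ranges of primes, that plays the role of a mollifier for $\bigl(\zeta(s+ih_1)\zeta(s+ih_2)\bigr)^{\beta-1}$ and whose moments of fixed order can be controlled by near-Gaussian methods. Hölder's inequality (with a dual form when $\beta<1$) then yields
\[
I(h_1,h_2) \geq \frac{\left|\tfrac{1}{T}\int_T^{2T}\zeta(\tfrac{1}{2}+it+ih_1)\zeta(\tfrac{1}{2}+it+ih_2)\,\overline{A(\tfrac{1}{2}+it;h_1,h_2)}\,dt\right|^{q}}{\left(\tfrac{1}{T}\int_T^{2T}|A(\tfrac{1}{2}+it;h_1,h_2)|^{q/(q-1)}\,dt\right)^{q-1}}
\]
for a suitable conjugate exponent $q=q(\beta)$. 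Theorem~\ref{thm:twisted2nd} evaluates the numerator uniformly in shifts of size $(\log T)^\theta$, while the denominator is a moment of a short Dirichlet polynomial, controlled by mean value theorems combined with the product structure of $A$.

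It remains to integrate this pointwise lower bound over $|h_j|\leq (\log T)^\theta$. The diagonal strip $|h_1-h_2|\leq 1/\log T$ contributes at order $(\log T)^{4\beta^2+\theta-1}$. The off-diagonal contribution, after the change of variables $u=h_1-h_2$, reduces to $(\log T)^{2\beta^2}$ times a weighted integral of $|\zeta(1+iu)|^{2\beta^2}$ over $1/\log T\leq |u|\leq 2(\log T)^\theta$; using the pole-type behaviour $|\zeta(1+iu)|\asymp 1/|u|$ for small $u$ and the boundedness of $|\zeta(1+iu)|$ on moderate $u$, this integral yields $(\log T)^{2\beta^2(1+\theta)}$ when $\theta\leq 0$ and $(\log T)^{2\beta^2+2\theta}$ when $\theta>0$. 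Comparing the two contributions identifies the phase transition: at $\beta=1/\sqrt{2}$ when $\theta\leq 0$, and at $\beta=\sqrt{(1+\theta)/2}$ when $\theta>0$. At the critical value $\beta=1/\sqrt{2}$, the boundary integral $\int_{1/\log T}^{1}du/u$ is responsible for the extra $\log\log T$.

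The main obstacle is constructing the mollifier $A(s;h_1,h_2)$ and controlling its interaction with $\zeta(\cdot+ih_1)\zeta(\cdot+ih_2)$ uniformly in both shifts. The two Euler approximations at $h_1$ and $h_2$ force both the numerator (through Theorem~\ref{thm:twisted2nd}) and the denominator (through the moment of $A$) to depend on $h_1-h_2$ in matched ways, so that the resulting pointwise lower bound recovers the $|\zeta(1+i(h_1-h_2))|^{2\beta^2}$ correlation decoration. Faithfully tracking this shift-dependence across the full range $|h_j|\leq (\log T)^\theta$, including its saturation near $|h_1-h_2|\sim 1/\log T$, is the technical heart of the argument, and it is what dictates the distinct behaviours of the integrated answer in the $\theta\leq 0$ and $\theta>0$ regimes.
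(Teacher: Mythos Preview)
Your overall architecture---establish a pointwise lower bound for $I(h_1,h_2)$ uniform in the shifts and then integrate over $|h_j|\le(\log T)^\theta$---is exactly the paper's route (Proposition~\ref{prop:mainLowerBound}), and your account of the integration step, including the emergence of the $\log\log T$ at $\beta=1/\sqrt{2}$ and the shift of the critical exponent to $\sqrt{(1+\theta)/2}$ when $\theta>0$, is correct. The pointwise step, however, does not go through as you describe, for two reasons.

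First, Theorem~\ref{thm:twisted2nd} plays no role in the proof; the paper says so explicitly at the start of Section~\ref{sec:twistedMoments}. That theorem concerns $\zeta(\tfrac12+\alpha_1+it)\zeta(\tfrac12+\alpha_2-it)$, i.e.\ one factor conjugated, and its CFKRS main term is a sum of swapped pieces of comparable size, from which a sharp lower bound is awkward to extract---the paper flags exactly this obstacle in the Remark after~(\ref{eqn:InterpolB<1}). Instead the numerator is taken with both factors $\zeta(\tfrac12+it+ih_1)\zeta(\tfrac12+it+ih_2)$ and evaluated via a separate approximate functional equation; the dual term then carries $\wt X_{h_1,h_2,t}=\lam(\tfrac12+ih_1+it)\lam(\tfrac12+ih_2+it)$, which oscillates at frequency $\asymp\log t$ and is killed by a van der Corput integration by parts, leaving a single positive diagonal sum (Proposition~\ref{prop:lowerTwistedB<1}).

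Second, and more seriously, your two-term H\"older does not cover $0\le\beta\le 1$, the range containing the critical point. For $\beta\le 1$ the paper uses the three-factor Heap--Soundararajan inequality~(\ref{eqn:InterpolB<1}); the middle factor is
\[
\int_\R |\zeta(\tfrac12+it+ih_1)\zeta(\tfrac12+it+ih_2)|^{2}\,\big|\n(\tfrac12+it+ih_1;\b-1)\n(\tfrac12+it+ih_2;\b-1)\big|^{2}\,w(t/T)\,dt,
\]
a twisted \emph{fourth} moment, and its upper bound (Proposition~\ref{prop:upperBoundTwistedFourth}) rests on Theorem~\ref{thm:twisted4th}. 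Your proposal never invokes the twisted fourth moment, so as written it cannot deliver the pointwise bound for $\beta<1$. (For $\beta\ge 1$ the paper's H\"older does collapse to a two-factor inequality~(\ref{eqn:InterpB>1}), and there your outline essentially matches what the paper does.)
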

\noindent
Theorems \ref{thm:mainUpperBounds} and \ref{thm:mainLowerBound}  are consequences of the following result, which may be of independent interest.

\begin{theorem}\label{thm:shiftedMoments}
If $0 \leq \b \leq 1$, $\t > -1$ and $|h_1|,|h_2|\leq (\log T)^\t$ then
\[
\frac{1}{T}\int_T^{2T} |\zeta(\tfrac{1}{2} + i t + ih_1)\zeta(\tfrac{1}{2} + i t + ih_2)|^{2\b} ~dt \ll (\log T)^{2\b^2} |\zeta(1 + i (h_1 - h_2) + 1/\log T)|^{2\b^2}.
\]
For all $\b \geq 0$, we have a lower bound 
\[
\frac{1}{T}\int_T^{2T} |\zeta(\tfrac{1}{2} + i t + ih_1)\zeta(\tfrac{1}{2} + i t + ih_2)|^{2\b} ~dt \gg_\b (\log T)^{2\b^2} |\zeta(1 + i (h_1 - h_2) + 1/\log T)|^{2\b^2}.
\]
\end{theorem}
\noindent
This improves the result of Ng, Shen, and Wong \cite{NSW} and provides a lower bound under the more restrictive assumptions that $\b \leq 1$ and that the shifts are smaller than $(\log T)^\t$. 
This bound is of the same strength as the result of \cite{NSW} when $|h_1 - h_2|$ is bounded, but is stronger when $|h_1 - h_2|$ tends to infinity.

We will first establish the necessary twisted moment formulae in section \ref{sec:twistedMoments}.
Subsequently we will prove Theorem \ref{thm:mainUpperBounds} in section \ref{sec:upperBounds} and then Theorem \ref{thm:mainLowerBound} in section \ref{sec:lowerBounds}.
We now introduce some notation we will use throughout the paper.
The notation is very similar to that of \cite{HRS}, with only a few cosmetic differences.
Given some positive integer $\ell$ and parameters $T_0 < T_1 < \ldots < T_\ell$, define for $1\leq j \leq \ell$
\[
\p_j(s) := \Re \sum_{T_{j-1} < p \leq T_j } p^{-s}, \qquad P_j := \sum_{T_{j-1} < p \leq T_j } p^{-1}.
\]
If $T_\ell$ is chosen to be some small power of $T$, then the sum of all the $\p_j(s)$ will be a good proxy for $\log |\zeta(s)|$ on average \cite{HRS, HS}, and each $P_j$ can be thought of as the variance of each $\p_j(s)$ on the half line.
Then given parameters $K_j \geq 0$ for $1\leq j \leq \ell$ let
\[
\n_j(s;\b) := \sum_{m = 0}^{K_j} \frac{1}{m!} (\b \p_j(s) )^m = \sum_{\substack{p\mid n \Rightarrow p\in (T_{j-1},T_j]\\ \Omega(n) \leq K_j}} \frac{\b^{\Omega(n)} g(n)}{n^s}
\]
where $g(n)$ is the multiplicative function such that $g(p^a) = 1/a!$ and $\Omega(n)$ is the number of prime divisors of $n$ counting multiplicity.
Notice that $\n_j(s;\b)$ is simply a truncation of the series expansion for $\exp(\b \p_j(s))$.
Since $\p_j(s)$ is a reasonable proxy for $\log |\zeta(s)|$, if we choose parameters $K_j$ to be large multiples of the variances $P_j$ then one might expect
\[
\n(s;\b) := \prod_{j\leq \ell} \n_j(s;\b)
\]
to behave like $\zeta(s)^\b$ on average on the half line.
The main difficulty here is that if the $\p_j(s)$ are unusually large for some bad set of $s$, then the series approximation for $\exp(\b \p_j(s))$ will be weaker.
Fortunately, such events are rare, and we can discard the contribution of such bad $s$ using the incremental structure of $\n(s;\b)$.

\section*{Acknowledgements}
\noindent
The author would like to
thank his supervisor Jon Keating for suggesting this problem and for helpful discussions and feedback.

\section{Twisted moments with mesoscopic shifts}\label{sec:twistedMoments}

\subsection{Twisted second moment}
An asymptotic for the twisted second moment of zeta with unbounded shifts is not necessary to prove Theorems \ref{thm:mainUpperBounds} or \ref{thm:mainLowerBound}. However, such an estimate will imply an asymptotic for $\MoM_T(2,1)$ and will serve as a good warm up to the twisted fourth moment estimate we will require.

Before stating our twisted moment estimate, we introduce some convenient notation.
Denote
\[
\lam(s) = \pi^{s-1/2} \frac{\Gamma((1-s)/2)}{\Gamma(s/2)}.
\]
Therefore the functional equation for $\zeta$ can be expressed as $\zeta(s) = \lam(1-s) \zeta(1-s)$, and $\lam$ satisfies its own functional equation $\lam(s) \lam(1-s) = 1$.
Define
\[
G(s) = e^{s^2} \left(1 - \left(\frac{2s}{\a_1+ \a_2} \right)^2\right), 
\]
\[
g_{\a_1,\a_2}(s,t) = \pi^{-s} \frac{\Gamma\left(\frac{\tfrac{1}{2} + \a_1 + s + i t}{2}\right)\Gamma\left(\frac{\tfrac{1}{2} + \a_2 + s - i t}{2}\right)}{\Gamma\left(\frac{\tfrac{1}{2} + \a_1 + i t}{2}\right)\Gamma\left(\frac{\tfrac{1}{2} + \a_2  - i t}{2}\right)},
\]
\[
X_{\a_1,\a_2,t} =  \lam(\tfrac{1}{2} + \a_1  + i t) \lam(\tfrac{1}{2} + \a_2 - i t).
\]
The precise definition of $G$ here is not too important–
the key properties are that $G(0) = 1$, $G$ has a zero at $\tfrac{\a_1+\a_2}{2}$, and $G$ decays rapidly in any vertical strip.
By equation 4.12.3 of \cite{Titchmarsh}, we can approximate
\begin{align*}
X_{\a_1,\a_2,t} =   &\left(1 + O\left(\frac{1}{t}\right)\right)\\
\times \exp\Bigg((\a_1 + it) \log\left(\frac{2\pi}{t + \Im \a_1}\right) &+ (\a_2-i t) \log\left(\frac{2\pi}{t - \Im \a_2}\right) + \a_1-\a_2 \Bigg). 
\end{align*}
Therefore if we define
\[
Y_{\a_1,\a_2,t} := \exp\left(\frac{\a_1 + it}{2} \log\left(\frac{2\pi}{t + \Im \a_1}\right) + i\frac{\a_2-i t}{2}\log\left(\frac{2\pi}{t - \Im \a_2}\right) + \frac{\a_1-\a_2}{2} \right) 
\]
we have the approximation $X_{\a_1,\a_2}= Y_{\a_1,\a_2}^2(1 + O(1/t))$.
We will use this quantity to symmetrize our formula for the twisted second moment.
Finally, we write
\[
V_{\a_1,\a_2}(x,t) = \frac{1}{2\pi i} \int_{(1)} \frac{G(s)}{s} g_{\a_1,\a_2}(s,t)x^{-s} ~ ds.
\]
By Proposition 5.4 of \cite{IK} it follows that for $A \geq 0$, $j \in \Z_{\geq 0}$ and $T$ sufficiently large in terms of $\theta$
\begin{equation}\label{eqn:VSecondDecay}
t^j \frac{\del^j}{\del t^j} V_{\a_1,\a_2}(x,t)  \ll_{A,j} (1 + |x|/t)^{-A}.
\end{equation}
We will make use of the following version of the approximate functional equation given by theorem 5.3 of \cite{IK}.

\begin{lemma}\label{lem:AFE2nd}
Let $t\in [T/4,4T]$, $\Re \a_j \ll 1/ \log T$, $|\Im \a_j| \leq (\log T)^\theta$ for $j = 1,2$, and $\theta > -1$. Then for all $A > 0$ 
\begin{align*}
    \zeta(\tfrac{1}{2} + \a_1 + i t) \zeta(\tfrac{1}{2} + \a_2 - i t) = \sum_{m,n} \frac{1}{m^{1/2 + \a_1} n^{1/2 + \a_2}} \left(\frac{m}{n}\right)^{- i t} V_{\a_1,\a_2}(mn,t)\\
    + X_{\a_1,\a_2,t} \sum_{m,n} \frac{1}{m^{1/2 - \a_2} n^{1/2 - \a_1}} \left(\frac{m}{n}\right)^{- i t} V_{-\a_2,-\a_1}(mn,t) + O_A((1 + t)^{-A}).
\end{align*}
\end{lemma}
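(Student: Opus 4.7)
The plan is to follow the standard contour-shift proof of the approximate functional equation, as in Theorem 5.3 of \cite{IK}, with care to accommodate the shifts $\a_1,\a_2$. Starting from
\[
I := \frac{1}{2\pi i}\int_{(2)} \frac{G(s)}{s} g_{\a_1,\a_2}(s,t)\,\zeta(\tfrac{1}{2}+\a_1+s+it)\,\zeta(\tfrac{1}{2}+\a_2+s-it)\,ds,
\]
I would evaluate $I$ in two different ways and equate the results. On $\Re s=2$, both zeta factors are absolutely convergent Dirichlet series, so expanding them and exchanging sum with integral (justified by the super-exponential vertical decay of $G$), then deforming the inner contour to $(1)$ (no poles crossed), yields the first sum in the lemma.

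Alternatively, I would shift the contour for $I$ from $(2)$ to $(-2)$. The simple pole of $1/s$ at $s=0$ contributes $G(0)g_{\a_1,\a_2}(0,t)\zeta(\tfrac{1}{2}+\a_1+it)\zeta(\tfrac{1}{2}+\a_2-it)=\zeta(\tfrac{1}{2}+\a_1+it)\zeta(\tfrac{1}{2}+\a_2-it)$. The polar contributions of the two zetas at $s=\tfrac{1}{2}-\a_j\mp it$ lie at imaginary part of magnitude at least $t/2$ (once $T$ is large enough in terms of $\theta$, since $|\Im\a_j|\leq(\log T)^\theta=o(t)$), where $|G(s)|\ll e^{-t^2/3}$; their residues are therefore absorbed into $O_A((1+t)^{-A})$. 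On the remaining $(-2)$-integral, apply $\zeta(u)=\lam(u)\zeta(1-u)$ to each zeta and substitute $s=-u$. Using $G(-u)=G(u)$, the minus sign from $1/(-u)$, and the algebraic identity
\[
g_{\a_1,\a_2}(-u,t)\,\lam(\tfrac{1}{2}+\a_1-u+it)\,\lam(\tfrac{1}{2}+\a_2-u-it)=X_{\a_1,\a_2,t}\,g_{-\a_2,-\a_1}(u,t),
\]
the transformed integral becomes $-X_{\a_1,\a_2,t}$ times the analogous integral on $(2)$ with shifts $-\a_2,-\a_1$, which upon Dirichlet series expansion (and deforming the inner contour back to $(1)$) is exactly $-X_{\a_1,\a_2,t}$ times the second sum in the lemma. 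Rearranging yields the claim.

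The one nontrivial algebraic step is the displayed identity; I would verify it directly by cancelling the $\Gamma$-factors in $g_{\a_1,\a_2}(-u,t)$ against the $\Gamma$-factors in the denominators of the two $\lam$'s, after which the surviving $\Gamma$-ratio and the $\pi$-power $\pi^{u+(\a_1-u+it)+(\a_2-u-it)}=\pi^{-u+\a_1+\a_2}$ match exactly with those appearing in $X_{\a_1,\a_2,t}g_{-\a_2,-\a_1}(u,t)$. Maintaining uniformity in the shifts is not a serious obstacle: because $t\asymp T$ while $|\Im\a_j|\leq(\log T)^\theta=o(t)$, Stirling-type asymptotics and the bounds on $G$ and $g_{\a_1,\a_2}$ hold uniformly, and the Gaussian vertical decay of $G$ comfortably dominates both the polynomial growth from the $(1-(2s/(\a_1+\a_2))^2)$ factor and any $\Gamma$-ratio growth. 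The main bookkeeping hazard is simply keeping the signs of the various $\pm it$ straight as they propagate through the functional-equation substitution, which is mechanical but where typos are easiest to make.
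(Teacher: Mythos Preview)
Your proposal is correct and is exactly the standard contour-shift argument behind Theorem~5.3 of \cite{IK}, which is all the paper invokes here; the paper gives no independent proof of this lemma but simply cites that theorem. Your verification of the key $\Gamma$-identity and your handling of the residues at $s=\tfrac{1}{2}-\a_j\mp it$ via the Gaussian decay of $G$ are both fine, so there is nothing to add.
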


\noindent
We can now prove the necessary twisted second moment estimate.

\begin{theorem}\label{thm:twisted2nd}
Let $\theta > -1$, $w$ be a smooth non-negative function supported on $[1/2,4]$, and   
\[
A_{\a_1,\a_2}(s) = \sum_{n\leq T^\eta} \frac{a_{\a_1,\a_2}(n)}{n^s},~ B_{\a_1,\a_2}(s) = \sum_{n\leq T^\eta} \frac{b_{\a_1,\a_2}(n)}{n^s} 
\] 
be Dirichlet polynomials with $\eta < 1/2$ and $a_{\a_1,\a_2}(n), b_{\a_1,\a_2}(n) \ll_\ve n^\ve$. 
Then uniformly in $\Re \a_j \ll 1/ \log T$ and $|\Im \a_j| \leq (\log T)^\theta$ for $j = 1,2$ 
\begin{align*}
    \int_{\R} \zeta(\tfrac{1}{2} + \a_1 + it)\zeta(&\tfrac{1}{2} + \a_2 - it) A_{\a_1,\a_2}(\tfrac{1}{2} + i t) \overline{B_{\a_1,\a_2}(\tfrac{1}{2} + i t) } w(t/T) ~ dt \\
    = \frac{1}{(2\pi i)^2}\oint\displaylimits_{|z_j - \a_j| \asymp 1/\log T} &\frac{\zeta(1 + z_1 + z_2) (z_1 - z_2)^2}{(z_1-\a_1)(z_1-\a_2)(z_2-\a_1)(z_2-\a_2)}F_{\a_1,\a_2}(z_1,z_2)\\
    \times &\int_\R   Y_{\a_1,\a_2,t} ~ Y_{-z_2,-z_1,t} ~ w(t/T) ~ dt ~ dz_1 ~ dz_2 + O(T^{1-\d})
\end{align*}
for some $\d>0$ where 
\[
F_{\a_1,\a_2}(z_1,z_2) = \sum_{n,m\leq T^\eta} \frac{a_{\a_1,\a_2}(n) \overline{b_{\a_1,\a_2}(m)} }{[n,m]} \frac{(n,m)^{z_1+z_2}}{n^{z_2}m^{z_1}}.
\]
\end{theorem}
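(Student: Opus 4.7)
The proof follows the standard recipe for shifted second moments, adapted to mesoscopic imaginary shifts. The plan is to apply Lemma~\ref{lem:AFE2nd} to express $\zeta(\tfrac12+\a_1+it)\zeta(\tfrac12+\a_2-it)$ as a ``straight'' Dirichlet sum involving $V_{\a_1,\a_2}(mn,t)$ plus a ``dual'' sum weighted by $X_{\a_1,\a_2,t}$, then multiply by $A_{\a_1,\a_2}(\tfrac12+it)\overline{B_{\a_1,\a_2}(\tfrac12+it)}w(t/T)$ and integrate over $t$. After expanding $A$ and $B$, each piece becomes a fourfold sum over $(m,n,a,b)$, which I would split into a diagonal contribution (namely $am=bn$ for the straight piece, and its analog for the dual piece) and an off-diagonal remainder.

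\textbf{Main term.} For the diagonal pieces, I would insert the Mellin representation
\[
V_{\a_1,\a_2}(x,t) = \frac{1}{2\pi i}\int_{(1)}\frac{G(s)}{s}g_{\a_1,\a_2}(s,t)x^{-s}\,ds
\]
and evaluate the resulting diagonal Dirichlet series as an Euler product of the form $\zeta(1+\cdots)F_{\a_1,\a_2}(\cdots)$. After using $X_{\a_1,\a_2,t}=Y_{\a_1,\a_2,t}^2(1+O(1/t))$ to symmetrize the dual piece, the two main terms assemble into the double contour integral of the theorem: the denominator $\prod_{i,j}(z_i-\a_j)$ is arranged so that evaluating residues at $z_j=\a_j$ reproduces the straight contribution, while the dual contribution (which in unsymmetrized form lives at $z_j=-\a_j$) is packaged in via the factor $Y_{-z_2,-z_1,t}$, and the factor $(z_1-z_2)^2$ in the numerator cancels the would-be pole coming from the zero of $G(s)$ at $s=\tfrac{\a_1+\a_2}{2}$. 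This is a direct instance of the CFKRS recipe for second moments with shifts.

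\textbf{Main obstacle.} The real work is bounding the off-diagonal contribution by $O(T^{1-\d})$ uniformly over $|\Im\a_j|\leq(\log T)^\theta$. After executing the $t$-integral, these terms reduce to binary additive divisor sums of the shape $\sum_{am-bn=h}\tau_{\a}(am)\tau_{\a}(bn)$ against smooth oscillatory weights, for which the analysis in Young's proof of Levinson's theorem \cite{YoungSL} applies since the Dirichlet polynomial length satisfies $\eta<\tfrac12$. The needed adaptation is to track dependence on $\Im\a_j$ through the stationary-phase analysis of the $t$-integral, using the derivative bound \eqref{eqn:VSecondDecay} and the Stirling asymptotics for $g_{\a_1,\a_2}(s,t)$. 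Since $(\log T)^\theta=T^{o(1)}$, any polynomial loss in the shifts is absorbed into a slightly smaller $\d$, which yields the required uniformity.
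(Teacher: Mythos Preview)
Your strategy for the main term is exactly the paper's: apply Lemma~\ref{lem:AFE2nd}, extract the diagonal $hm=kn$, insert the Mellin form of $V_{\a_1,\a_2}$, shift the $s$-contour to pick up the residue at $s=0$ (the zero of $G$ at $s=\tfrac{\a_1+\a_2}{2}$ kills the $\zeta$-pole), replace $X_{\a_1,\a_2,t}$ by $Y_{\a_1,\a_2,t}^2$, symmetrize via $Y_{\a_1,\a_2,t}^{-1}=Y_{-\a_2,-\a_1,t}$, and invoke Lemma~2.5.1 of \cite{CFKRS}. So far this is a perfect match.

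Where you diverge from the paper is the off-diagonal, and you make it sound harder than it is. There are no binary additive divisor sums here: the Dirichlet coefficients in the approximate functional equation are $1$, not $\tau$, so after twisting by $A,B$ the off-diagonal is a sum over $(h,k,m,n)$ with $hm\neq kn$ and no divisor weights. Nor is any stationary-phase analysis needed. The paper simply integrates by parts repeatedly in $t$: for $hm\neq kn$ the integrand carries the phase $(hm/kn)^{-it}$, and each integration by parts against $w(t/T)V_{\a_1,\a_2}(mn,t)$ gains a factor $(T|\log(hm/kn)|)^{-1}$ using \eqref{eqn:VSecondDecay}. Since $hm,kn$ are integers, $|\log(hm/kn)|\gg 1/\max(hm,kn)$; combining this with the effective truncation $mn\ll T$ from the decay of $V$, one checks that for any $\eta<\tfrac12$ enough integrations by parts yield $O(T^{1-\d})$. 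This is precisely what Young does in \cite{YoungSL}, so your citation is apt, but the mechanism is elementary IBP rather than the shifted-convolution machinery your phrasing suggests (that enters only in the fourth-moment Theorem~\ref{thm:twisted4th}). The mesoscopic shifts $|\Im\a_j|\le(\log T)^\theta$ are harmless here for the reason you state: they only perturb the $t$-derivative bounds \eqref{eqn:VSecondDecay} by $T^{o(1)}$.
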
 

\begin{rmk}
While there is be plenty of flexibility in the choice of contours about the shifts $\a_j$, one must take some care to ensure the contours do not intersect.
\end{rmk}

\begin{proof}
By Lemma \ref{lem:AFE2nd}, up to an arbitrary power savings in $T$ the integral of interest equals
\begin{align*}
&\sum_{h,k\leq T^\eta} \frac{a_{\a_1,\a_2}(h)\overline{b_{\a_1,\a_2}(k)}}{\sqrt{hk}}\sum_{m,n} \int_\R  \left(\frac{hm}{kn}\right)^{-it} V_{\a_1,\a_2}(mn,t)\frac{1}{m^{1/2+\a_1}n^{1/2+\a_2}} w(t/T)~dt \\
+ 
\sum_{h,k\leq T^\eta} &\frac{a_{\a_1,\a_2}(h)\overline{b_{\a_1,\a_2}(k)}}{\sqrt{hk}}\sum_{m,n} \int_\R X_{\a_1,\a_2,t}  \left(\frac{hm}{kn}\right)^{-it} V_{-\a_2,-\a_1}(mn,t)\frac{1}{m^{1/2-\a_2}n^{1/2-\a_1}} w(t/T)~dt  
\end{align*}
Inserting the definition of $V_{\a_1,\a_2}$ we see the diagonal terms with $hm = kn$ contribute 
\begin{align*}
&\quad\sum_{h,k\leq T^\eta} \frac{a_{\a_1,\a_2}(h)\overline{b_{\a_1,\a_2}(k)}}{2\pi i \sqrt{hk}} \int_\R  \int_{(1)}\frac{G(s)}{s}g_{\a_1,\a_2}(s,t) \sum_{hm = kn}\frac{1}{m^{1/2+\a_1+s}n^{1/2+\a_2+s}} w(t/T)~ds ~dt \\
+ 
&\sum_{h,k\leq T^\eta} \frac{a_{\a_1,\a_2}(h)\overline{b_{\a_1,\a_2}(k)}}{2\pi i \sqrt{hk}} \int_\R X_{\a_1,\a_2,t} \int_{(1)} \frac{G(s)}{s}g_{-\a_2,-\a_1}(s,t) \sum_{hm = kn}  \frac{1}{m^{1/2-\a_2 + s}n^{1/2-\a_1 + s}} w(t/T)~ ds ~dt 
\end{align*}
The inner Dirichlet series can be evaluated by parameterizing $m = \ell k/(h,k)$, $n = \ell h/(h,k)$ for $\ell \geq 1$ which gives
\[
\sum_{hm = kn}  \frac{1}{m^{1/2+\a_1 + s}n^{1/2+\a_2 + s}} = \frac{(h,k)^{1+\a_1 + \a_2 + 2s}}{h^{1/2+\a_2 + s} k^{1/2+\a_1+s}}\zeta(1 + \a_1 + \a_2 + 2s)
\]
for $\Re s = 1$.
Shifting the contour to $\Re s = -1/4+\ve$, we pick up a pole at $s = 0$ while the zero of $G(s)$ cancels out the pole of $\zeta(1+\a_1+\a_2 + 2 s)$.
Using Stirling's approximation and that the shifts have small real part  to bound the integral on $\Re s = -1/4+\ve$, we may simplify our expression to
\begin{align}\label{eqn:twisted2ndSum}
\sum_{h,k\leq T^\eta} &\frac{a_{\a_1,\a_2}(h)\overline{b_{\a_1,\a_2}(k)}}{[h,k]} \int_\R  \Bigg(\frac{(h,k)^{\a_1 + \a_2}}{h^{\a_2}k^{\a_1}}\zeta(1+\a_1+\a_2)  \\
&+ X_{\a_1,\a_2,t} \frac{(h,k)^{-\a_1 - \a_2}}{h^{-\a_1}k^{-\a_2}}\zeta(1-\a_1-\a_2)    \Bigg)w(t/T) ~dt + O\left(T^{1/2+\ve}\sum_{h,k\leq T^\eta} (hk)^{-1/4+\ve} \right) \nonumber.
\end{align}
The error term is $O(T^{1-\d})$ provided $\eta < 1/2$.
To evaluate the main term, we first replace $X_{\a_1,\a_2,t}$ by $Y_{\a_1,\a_2,t}^2$ at a negligible cost.
Because $Y_{\a_1,\a_2,t}^{-1} = Y_{-\a_2,-\a_1,t}$, we can now symmetrize our expression as
\begin{align}
\sum_{h,k\leq T^\eta} &\frac{a_{\a_1,\a_2}(h)\overline{b_{\a_1,\a_2}(k)}}{[h,k]} \int_\R  Y_{\a_1,\a_2,t} \Bigg( Y_{-\a_2,-\a_1,t} \frac{(h,k)^{\a_1 + \a_2}}{h^{\a_2}k^{\a_1}}\zeta(1+\a_1+\a_2)  \\
&+ Y_{\a_1,\a_2,t} \frac{(h,k)^{-\a_1 - \a_2}}{h^{-\a_1}k^{-\a_2}}\zeta(1-\a_1-\a_2)    \Bigg)w(t/T) ~dt + O\left(T^{1/2+\ve}\sum_{h,k\leq T^\eta} (hk)^{-1/4+\ve} \right) \nonumber.
\end{align}
Now an appeal to lemma 2.5.1 of \cite{CFKRS} permits us to rewrite the main term as the desired contour integral. 
To conclude, we must show that the off-diagonal terms with $hm \neq kn$ give a negligible contribution provided $\eta < 1/2$.
This follows by using repeated integration by parts and (\ref{eqn:VSecondDecay}).

\end{proof}

\subsection{Twisted fourth moment}
We now cover the corresponding notation for the twisted fourth moment estimate.
Write $\sigma_{z_1,z_2}(n) = \sum_{a b =  n} a^{-z_1} b^{-z_2}$ for the generalized divisor sum functions, and for the sake of concreteness let 
\[
G(s) = e^{s^2} \left(1 - \left(\frac{2s}{\a_1+ \a_3} \right)^2\right)\left(1 - \left(\frac{2s}{\a_1+ \a_4} \right)^2\right)\left(1 - \left(\frac{2s}{\a_2+ \a_3} \right)^2\right)\left(1 - \left(\frac{2s}{\a_2+ \a_4} \right)^2\right).
\]
For simplicity of notation, denote $\bm{\a} = (\a_1,\a_2,\a_3,\a_4)$ and $\pi\bm{\a} = (\a_3,\a_4,\a_1,\a_2)$. Write
\[
g_{\bm{\a}}(s,t) = g_{\a_1,\a_3}(s,t) g_{\a_2,\a_4} (s,t), 
\]

\[
X_{\bm{\a},t} = 
X_{\a_1,\a_3,t} ~ X_{\a_2,\a_4,t} ,\quad Y_{\bm{\a},t} = 
Y_{\a_1,\a_3,t} ~ Y_{\a_2,\a_4,t} ,
\]
and
\[
V_{\bm{\a}}(x,t) = \frac{1}{2\pi i} \int_{(1)} \frac{G(s)}{s} g_{\bm{\a}}(s,t)x^{-s} ds.
\]
As before, $X_{\bm{\a},t} = Y_{\bm{\a},t}^2(1+O(1/t))$ and we have the decay estimate
\begin{equation}\label{eqn:VFourthDecay}
t^j \frac{\del^j}{\del t^j} V_{\bm{\a}}(x,t)  \ll_{A,j} (1 + |x|/t^2)^{-A}.
\end{equation}
The appropriate approximate functional equation from theorem 5.3 of \cite{IK} takes the following form.

\begin{lemma}\label{lem:AFE4th}
Let $t\in [T/4,4T]$, $\Re \a_j \ll 1/ \log T$, $|\Im \a_j| \leq (\log T)^\theta$ for all $j$, and $\theta > -1$. Then for all $A > 0$ 
\begin{align*}
    \zeta(\tfrac{1}{2} + \a_1 + i t) \zeta(\tfrac{1}{2} + \a_2 + i t)\zeta(\tfrac{1}{2} + \a_3 - &i t) \zeta(\tfrac{1}{2} + \a_4 - i t)\\
    = \sum_{m,n} \frac{\sigma_{\a_1,\a_2}(m)\sigma_{\a_3,\a_4}(n)}{\sqrt{mn}} \left(\frac{m}{n}\right)^{- i t} &V_{\bm{\a}}( mn,t)\\
    + X_{\bm{\a},t} \sum_{m,n} \frac{\sigma_{-\a_3,-\a_4}(m)\sigma_{-\a_1,-\a_2}(n)}{\sqrt{mn}} \left(\frac{m}{n}\right)^{- i t} V_{-\pi\bm{\a}}&( mn,t) + O_A((1 + t)^{-A}).
\end{align*}
\end{lemma}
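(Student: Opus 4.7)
The plan is to apply the standard approximate functional equation machinery (Theorem 5.3 of \cite{IK}) directly to the four-fold product
\[
L(s) := \zeta(\tfrac{1}{2} + s + \a_1 + it)\zeta(\tfrac{1}{2} + s + \a_2 + it)\zeta(\tfrac{1}{2} + s + \a_3 - it)\zeta(\tfrac{1}{2} + s + \a_4 - it),
\]
viewed as a meromorphic function completed by $g_{\bm{\a}}(s,t)$. The argument mirrors the proof of the second-moment Lemma \ref{lem:AFE2nd}, with the pairing $(\a_1,\a_3),(\a_2,\a_4)$ built into $g_{\bm{\a}}$ replacing the single pairing there. Start from
\[
I := \frac{1}{2\pi i}\int_{(3)} \frac{G(s)}{s}\, g_{\bm{\a}}(s,t)\, L(s)\, ds.
\]
On $\Re s = 3$ the product $L(s)$ is given by the absolutely convergent Dirichlet series $\sum_{m,n} \sigma_{\a_1,\a_2}(m)\sigma_{\a_3,\a_4}(n) (mn)^{-1/2 - s}(m/n)^{-it}$; interchanging sum and integral and recognizing the inner contour integral as $V_{\bm{\a}}(mn,t)$ gives exactly the first term of the lemma.

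Next, shift the contour from $\Re s = 3$ to $\Re s = -3$. The simple pole at $s = 0$ has residue $G(0) g_{\bm{\a}}(0,t) L(0) = L(0)$, which is the quantity we are computing. The remaining poles, coming from each $\zeta(\tfrac{1}{2} + s + \a_j \pm it)$ at $s = \tfrac{1}{2} - \a_j \mp it$ (hence $|\Im s| \approx t$), are killed by the factor $e^{s^2}$ in $G$, which there has size $e^{1/4 - t^2 + O(1/\log T)}$, so they contribute $O_A((1+t)^{-A})$. On $\Re s = -3$, apply $\zeta(w) = \lam(1-w)\zeta(1-w)$ to each of the four zetas. Using the explicit formula $\lam(\tfrac{1}{2} + s + \a \pm it) = \pi^{s+\a\pm it}\Gamma((\tfrac{1}{2} - s - \a \mp it)/2)/\Gamma((\tfrac{1}{2} + s + \a \pm it)/2)$ and checking cancellations pair-by-pair against $g_{\a_1,\a_3}(s,t)$ and $g_{\a_2,\a_4}(s,t)$, one verifies the key identity
\[
g_{\bm{\a}}(s,t) \prod_j \lam(\tfrac{1}{2} + s + \a_j \pm it) = X_{\bm{\a},t}\cdot g_{-\pi\bm{\a}}(-s,t).
\]
After the substitution $s \mapsto -s$ (which flips the contour back to $\Re s = 3$ and uses that $G$ is even), the reflected product $\prod_j \zeta(\tfrac{1}{2} + s - \a_j \mp it)$ has Dirichlet series with coefficients $\sigma_{-\a_3,-\a_4}(m)\sigma_{-\a_1,-\a_2}(n)$, and the inner integral becomes $V_{-\pi\bm{\a}}(mn,t)$; this produces precisely the second term of the lemma, with the prefactor $X_{\bm{\a},t}$ inherited from the identity above.

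The main obstacle is the algebraic verification of the displayed gamma-factor identity: the denominator gammas of $g_{\a_1,\a_3}(s,t)$ must cancel with those arising from $\lam(\tfrac{1}{2} + \a_1 + it)\lam(\tfrac{1}{2} + \a_3 - it) = X_{\a_1,\a_3,t}$, leaving numerator gammas that recombine into $g_{-\a_3,-\a_1}(-s,t)$ with the correct powers of $\pi$; the analogous $(2,4)$ computation provides the second factor in $g_{-\pi\bm{\a}}(-s,t)$. Secondary technicalities---the Dirichlet series tail convergence and the decay estimate (\ref{eqn:VFourthDecay}) for $V_{\bm{\a}}$ (obtained by pushing the $V$-contour far right and using Stirling, which yields $g_{\bm{\a}}(s,t) \asymp (t^2)^s$, so the integrand is controlled by $(t^2/x)^s$)---are handled exactly as in Lemma \ref{lem:AFE2nd}.
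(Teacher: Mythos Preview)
Your proposal is correct and is precisely the standard contour-shifting argument behind Theorem~5.3 of \cite{IK}, which the paper simply cites without writing out; the gamma-factor identity you isolate is the heart of that argument and your verification of it is sound. One cosmetic point: at the zeta poles $s=\tfrac{1}{2}-\a_j\mp it$ the error in $\Re s^2$ coming from $\Im\a_j$ is $O(t(\log T)^\theta)$ rather than $O(1/\log T)$, but since this is still $o(t^2)$ the conclusion $e^{\Re s^2}\ll e^{-t^2/2}\ll_A (1+t)^{-A}$ is unaffected.
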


\noindent
Finally, we set
\[
B_{z_1,z_2,z_3,z_4}(n) = \prod_{p^m \|  n} \left(\sum_{j\geq 0} \frac{\sigma_{z_1,z_2}(p^{j}) \sigma_{z_3,z_4} (p^{j+m})}{p^j} \right)
\left(\sum_{j\geq 0} \frac{\sigma_{z_1,z_2}(p^{j}) \sigma_{z_3,z_4} (p^j)}{p^j} \right)^{-1},
\]

\[
A(z_1,z_2,z_3,z_4) = \frac{\zeta(1+z_1 + z_3)\zeta(1+z_1 + z_4)\zeta(1+z_2 + z_3)\zeta(1+z_2 + z_4)}{\zeta(2+z_1 + z_2+ z_3 + z_4)},
\]
and let
\[
\Delta(z_1,z_2,z_3,z_4) = \prod_{1\leq j < k \leq 4} (z_k - z_j)
\]
denote the Vandermonde determinant.

\begin{theorem}\label{thm:twisted4th}
Let $\theta > -1$, $w$ be a smooth non-negative function supported on $[1/2,4]$, and  
\[
A_{\bm{\a}}(s) = \sum_{n\leq T^\eta} \frac{a_{\bm{\a}}(n)}{n^s} \]
be a Dirichlet polynomial with $\eta < 1/11$ and $a_{\bm{\a}}(n) \ll_\ve n^\ve$. 
Then uniformly for $\Re \a_j \ll 1/ \log T$ and $|\Im \a_j| \leq (\log T)^\theta$
\begin{align*}
\int_{\R} \zeta(\tfrac{1}{2} + \a_1 + i t) \zeta(\tfrac{1}{2} &+ \a_2 + i t)\zeta(\tfrac{1}{2} + \a_3 - i t) \zeta(\tfrac{1}{2} + \a_4 - i t) |A_{\bm{\a}}(\tfrac{1}{2} + i t)|^2 w(t/T) ~ dt\\ 
= \frac{1}{4(2\pi i)^4}&\oint\displaylimits_{|z_j - \a_j| \asymp 1/\log T} A(\bm{z}) \Delta(z_1,z_2,-z_3,-z_4)^2 F_{\bm{\a}}(\bm{z})\\
&\int_\R  Y_{\bm{\a},t}~ Y_{-\pi\bm{z},t} ~w(t/T) ~ dt ~\prod_{ j \leq 4} \prod_{k\leq 4} \frac{dz_j}{(z_j-\a_k)}+ O(T^{1-\d})
\end{align*}
for some $\d > 0$, where $\bm{z} = (z_1,z_2,z_3,z_4)$, $-\pi\bm{z} = (-z_3,-z_4,-z_1,-z_2)$ and 
\[
F_{\bm{\a}}(\bm{z}) = \sum_{n,m\leq T^\eta} \frac{a_{\bm{\a}}(n) \overline{a_{\bm{\a}}(m)}}{[n,m]} B_{\bm{z}}\left(\frac{n}{(n,m)}\right) B_{-\pi\bm{z}}\left(\frac{m}{(n,m)}\right).
\]
\end{theorem}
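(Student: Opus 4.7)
The plan is to mirror the structure of the proof of Theorem \ref{thm:twisted2nd}, but with the technical machinery of Hughes and Young \cite{HYTwisted4} replacing the elementary divisor-sum evaluation. First I apply Lemma \ref{lem:AFE4th} to decompose the integrand as two sums over $m,n$ (a ``main'' piece and a ``dual'' piece twisted by $X_{\bm{\a},t}$), and expand $|A_{\bm{\a}}(\tfrac{1}{2}+it)|^2$ to obtain a sixfold sum whose outer coefficients are $a_{\bm{\a}}(h)\overline{a_{\bm{\a}}(k)}/\sqrt{hk}$ with $h,k\leq T^\eta$. The contribution is split into diagonal terms $hm=kn$ and off-diagonal terms $hm \neq kn$.

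For the diagonal, I insert the contour-integral definition of $V_{\bm{\a}}$, parameterize $hm=kn$ by a single variable, and evaluate the inner Dirichlet series. This produces, after using the Ramanujan-type local factor identity, the combination $A(z_1,z_2,z_3,z_4)B_{\bm{z}}(n/(n,m))B_{-\pi\bm{z}}(m/(n,m))$ times a product of gamma ratios. Shifting the $s$-contour leftward picks up only the pole at $s=0$ since the four explicit factors of $G(s)$ cancel each of the potential poles of $A(\bm{z})$ at $z_j+z_k=0$. After replacing $X_{\bm{\a},t}$ with $Y_{\bm{\a},t}^2$ and using $Y^{-1}_{\bm{\a},t}=Y_{-\pi\bm{\a},t}$ to symmetrize, a direct application of \cite[Lemma 2.5.1]{CFKRS} repackages the two surviving terms as the single fourfold contour integral in the theorem statement, with the Vandermonde factor $\Delta(z_1,z_2,-z_3,-z_4)^2$ arising from that lemma.

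The principal obstacle is the off-diagonal contribution $hm\neq kn$, where the decay estimate (\ref{eqn:VFourthDecay}) only forces $mn\ll T^{2+\varepsilon}$, so integration by parts alone (which sufficed in the proof of Theorem \ref{thm:twisted2nd}) is inadequate. Here I invoke the full strength of \cite{HYTwisted4}: the $\delta$-method is used to separate the oscillatory factor $(hm/kn)^{-it}$ and to open up shifted convolution sums of the divisor functions $\sigma_{\a_1,\a_2}$ and $\sigma_{\a_3,\a_4}$. These convolution sums are evaluated via the Kloosterman-sum/Kuznetsov-formula apparatus and estimated using Weil's bound together with the spectral large sieve. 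The constraint $\eta<1/11$ arises exactly as in \cite{HYTwisted4} from balancing the Kloosterman treatment against the trivial bound.

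The work that remains specific to this paper is to verify uniformity in the shifts $|\Im\a_j|\leq (\log T)^\theta$, whereas \cite{HYTwisted4} is written for $\a_j\ll 1/\log T$. At each point where the analysis invokes Stirling's approximation for gamma factors, estimates for $g_{\bm{\a}}(s,t)$ or $X_{\bm{\a},t}$, or bounds on the convolution coefficients, the relevant dependence is polynomial in $|\a_j|$, so enlarging the imaginary parts from $O(1)$ to $(\log T)^\theta$ inflates the intermediate bounds by at most a power of $\log T$. This is absorbed harmlessly by the power savings $T^{-\delta}$ coming from the Kloosterman-sum treatment, so one recovers an error of size $O(T^{1-\delta'})$ for some slightly smaller $\delta'>0$. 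The expected main obstacle is therefore not new mathematics but the bookkeeping required to propagate this uniformity through every estimate of \cite{HYTwisted4}, particularly in the spectral large-sieve step, where one must check that the shift-dependence of the test functions does not destroy the savings.
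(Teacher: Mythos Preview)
Your overall plan matches the paper's: apply the approximate functional equation, split into diagonal and off-diagonal, import the Hughes--Young machinery for the latter, and verify uniformity in the enlarged shifts. However, there is a structural gap in how you treat the off-diagonal.

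In the twisted fourth moment the off-diagonal terms $hm\neq kn$ are \emph{not} purely error: the shifted convolution analysis in \cite{HYTwisted4} extracts four additional main terms (the $P^{(0)}_{\bm{\a}}$ terms in the paper's notation), one for each ``partial swap'' of the shifts. The diagonal gives only two of the six terms predicted by the CFKRS recipe; you cannot apply \cite[Lemma~2.5.1]{CFKRS} to the two diagonal pieces alone and recover the full contour integral with the Vandermonde factor. Your sentence ``a direct application of \cite[Lemma 2.5.1]{CFKRS} repackages the two surviving terms as the single fourfold contour integral'' is therefore incorrect---you must first add in the four off-diagonal main terms before that lemma applies.

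Relatedly, your account of the uniformity issue misidentifies where the work lies. The error terms from the $\delta$-method are indeed robust to shifts of size $(\log T)^\theta$, as you say. But the off-diagonal \emph{main} terms in \cite{HYTwisted4} are written with factors $(t/2\pi)^{-\a_j-\a_k}$, which is only a valid approximation to the true gamma ratio when $|\a_j|\ll 1$. The paper's actual modification is to use a more precise Stirling approximation (in place of equation~(94) of \cite{HYTwisted4}), so that the off-diagonal main terms carry factors $X_{\a_j,\a_k,t}$, which are then replaced by $Y_{\a_j,\a_k,t}^2$. Only after this correction does the symmetrization go through. So the obstacle is not bookkeeping in the error bounds but rewriting the off-diagonal main terms in a form compatible with the larger shifts.

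A minor point: \cite{HYTwisted4} uses the $\delta$-method together with Voronoi summation and the Weil bound for Kloosterman sums; there is no Kuznetsov formula or spectral large sieve in that argument, so your final paragraph's concern about shift-dependence in the spectral step does not arise.
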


\noindent By using Lemma \ref{lem:AFE4th}, one can prove that theorem 1.1 of \cite{HYTwisted4} is also valid for shifts with $\Re \a_j \ll 1/ \log T$ and $|\Im \a_j| \leq (\log T)^\theta$ provided one slightly modifies the main term.
We shall now sketch the necessary modifications.

Proceeding in the same manner of the proof of Theorem \ref{thm:twisted2nd}, we find that the diagonal term $I_D^{(1)}(h,k)$ defined by (38) of \cite{HYTwisted4} 
is given by
\[
I_D^{(1)}(h,k) = \frac{1}{\sqrt{h k}} \int_\R Z_{\a_1,\a_2,\a_3,\a_4,h,k}(0) \phi(t/T) dt + O(T^{1/2+\ve} (h k)^{-1/4}),
\]
where $Z_{\bm{\a},h,k}(s)$ is defined by (15) in \cite{HYTwisted4}.
Similarly, the term $I_D^{(2)}(h,k)$ defined by (39) of \cite{HYTwisted4} is given by 
\[
I_D^{(2)}(h,k) = \frac{1}{\sqrt{h k}} \int_\R Y_{\a_1,\a_2,\a_3,\a_4,t}^2  Z_{-\a_3,-\a_4,-\a_1,-\a_2,h,k}(0) \phi(t/T) dt + O(T^{1/2+\ve} (h k)^{-1/4}).
\]
Note we have also replaced $X_{\bm{\a},t}$ with $Y_{\bm{\a},t}^2$ which incurs a negligible cost. 
All of the $J_{\bm{\a}}$ terms defined by (56) and (57) of \cite{HYTwisted4}  vanish since the zeros of $G$ cancel out the poles of the integrand.

We now handle the off-diagonal contribution. 
Since the real parts of the shifts are small, the error term arising from the delta method in \cite{HYTwisted4}  is unchanged, but we have a slightly different main term.
The key modification is that  in place of (94) of \cite{HYTwisted4} one needs a more precise version of Stirling's approximation
\begin{align*}
\frac{\Gamma(\tfrac{1}{2} -\a_1 - s \pm i t)}{\Gamma(\tfrac{1}{2} +\a_3 + s \pm i t)}  &= \exp\bigg[ (\a_1 + s + i t) \log \left(1+ \frac{\a_1+ \a_3 + 2s}{\tfrac{1}{2} - \a_1 - s - it}\right) + \a_1 +\a_3 +2s \\
-(\a_1 + \a_3 + 2s&) \log(t-\Im \a_3) \mp (\a_1 + \a_3 + 2s) \frac{\pi i}{2}  \bigg] \left(1 + O\left(\frac{(1 + |s|^2 )(\log T)^{2\t}}{t}\right)\right),
\end{align*} 
which holds uniformly in $|\a_1|,|\a_3|\leq (\log T)^\t < T/10$ and $t\in [T/4,4T]$.
Notice also that the hypotheses of lemma 2 of \cite{BettinUS} are also satisfied in this range, so we may write
\begin{align*}
&\quad X_{\a_1,\a_3,t} = \lam(\tfrac{1}{2}+\a_1 + i t)\lam(\tfrac{1}{2} + \a_3 - i t)  =   \left(1 + O\left(\frac{(\log T)^{2\t}}{t}\right)\right)\\
\times \exp&\bigg(-(\a_1 + \a_3) \log \left(\frac{t- \Im \a_3}{2\pi} \right)+ (\a_1 + i t) 
\log\left(1+ \frac{\a_1+\a_3}{\tfrac{1}{2}-\a_1-it}\right) + \a_1 + \a_3 \bigg).
\end{align*}
One may then continue following the argument of section 6 of \cite{HYTwisted4} with $X_{\a_1,\a_3,t}$ in place of $(t/2\pi)^{-\a_1-\a_3}$. 
The expression for $P_{\bm{\a}}^{(0)}$ in proposition 6.8 of \cite{HYTwisted4} will now be replaced by
\[
P_{\a_1,\a_2,\a_3,\a_4}^{(0)} = \frac{1}{\sqrt{h k}} \int_\R 
Y_{\a_1,\a_3,t}^2 ~ Z_{-\a_3,\a_2,-\a_1,\a_4,h,k}(0) \phi(t/T) dt,
\]
where again we have replaced $X_{\a_1,\a_3,t}$ with $Y_{\a_1,\a_3,t}^2$ at a negligible cost.

Upon adding up all the diagonal and off-diagonal terms, we now conclude the proof by symmetrizing the integrand as we did for Theorem \ref{thm:twisted2nd}.
Simply note that we can divide out a factor of $Y_{\bm{\a}}$ from the integrand by using that $Y_{\g,\d,t}^{-1} = Y_{-\d,-\g,t}$ and 
\[
Y_{\a_1,\a_2,\a_3,\a_4}^{-1} = Y_{-\a_3,-\a_1,t}~Y_{-\a_4,-\a_2,t} = Y_{-\a_4,-\a_1,t}~Y_{-\a_3,-\a_2,t}.
\]
This puts the integrand in a form where lemma 2.5.1 of \cite{CFKRS} is applicable, and we may conclude by summing over $h$ and $k$.

\section{Upper Bounds}\label{sec:upperBounds}

To prove Theorem \ref{thm:mainUpperBounds}, we must estimate how zeta is correlated on average along intervals of size $(\log T)^\t$.
When the difference $|h_1 - h_2|$ is of size $\ll 1 / \log T$, one could simply appeal to a $4\b^{\text{th}}$ moment bound for zeta and the Cauchy-Schwarz inequality to obtain a bound of the right order of magnitude.
To handle larger shifts, stronger techniques are needed however since $\zeta(\tfrac{1}{2}+ i t + i h_1)$ and $\zeta(\tfrac{1}{2}+ i t + i h_2)$ decouple.
We will adapt the method of Heap, Radziwiłł, and Soundararajan \cite{HRS} and use twisted moments to efficiently estimate the correlation.
Throughout this section we will select the following parameters:
Denote by $\log_j$ the $j$-fold iterated logarithm, and we will take $\ell$ to be the largest integer so that $\log_\ell T \geq 10^4$.
Let $T_0 = e^2$ and for $1\leq j \leq \ell$ take
\[
T_j = \exp\left(\frac{\log T}{(\log_{j+1} T)^2} \right)
\]
and $K_j = 250P_j$ for $1\leq j \leq \ell$.

The first step will be to bound the integrand in (\ref{eqn:MoMTensor}) by products of integral powers of zeta and Dirichlet polynomials following \cite{HRS}.

\begin{prop}\label{prop:interpolation}
For $0 \leq \b \leq 1$ and $s,w \in \C$
\begin{align*}
    |\zeta(s)|^{2\b} |\zeta(w)|^{2\b} &\ll  |\zeta(s)|^2|\zeta(w)|^2 \prod_{1\leq j \leq \ell} |\n_{j}(s;\b-1)|^2|\n_{j}(w;\b-1)|^2 \\
    +  \prod_{1\leq j \leq \ell} |\n_{j}(s;\b)|^2|\n_{j}(w;\b)&|^2
    + \sum_{1\leq v \leq \ell} \bigg(|\zeta(s)|^2|\zeta(w)|^2 \prod_{1\leq j <v} |\n_{j}(s;\b-1)|^2|\n_{j}(w;\b-1)|^2 \\
    + \prod_{1\leq j < v} &|\n_{j}(s;\b)|^2|\n_{j}(w;\b)|^2\bigg)\left(\Bigg|\frac{\p_v(s) }{50 P_v}\Bigg|^{2 \lceil 50 P_v \rceil} + \Bigg|\frac{\p_v(w) }{50 P_v}\Bigg|^{2 \lceil 50 P_v \rceil}\right).
\end{align*}
\end{prop}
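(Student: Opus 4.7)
The plan is to adapt the incremental stopping-time decomposition of \cite{HRS} to the product $|\zeta(s)\zeta(w)|^{2\b}$. The key intuition is that $\n_j(u;\b)$ is the degree-$K_j$ truncation of the Taylor expansion of $\exp(\b\p_j(u))$, and when $|\p_j(u)| \leq 50 P_j$ the remainder of that expansion is negligible (by Stirling applied to $|\b\p_j(u)|^{K_j+1}/(K_j+1)!$, using the ample ratio $K_j = 250 P_j$). Consequently on the event that $|\p_j(u)| \leq 50 P_j$, one has $|\n_j(u;\b)|^2 \asymp |\zeta_j(u)|^{2\b}$, where $\zeta_j$ denotes the partial Euler product over primes in $(T_{j-1},T_j]$. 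Because $0 \leq \b \leq 1$, the same quantity $|\zeta_j(u)|^{2\b}$ can equivalently be written as $|\zeta_j(u)|^2 |\zeta_j(u)|^{2(\b-1)} \asymp |\zeta_j(u)|^2|\n_j(u;\b-1)|^2$; this yields the two forms appearing in the main terms of the proposition.

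With this in place, I introduce a joint stopping index $v = v(s,w) \in \{1,\ldots,\ell+1\}$, defined as the smallest $j$ satisfying $|\p_j(s)| > 50 P_j$ or $|\p_j(w)| > 50 P_j$, with $v = \ell+1$ if no such $j$ exists. Using the trivial identity $1 = \mathbf{1}_{v = \ell+1} + \sum_{v=1}^\ell \mathbf{1}_{v(s,w) = v}$, I handle the cases separately. On the good event $\{v(s,w) = \ell+1\}$, the approximation above is valid at every scale and for both $u = s$ and $u = w$, so $|\zeta(s)\zeta(w)|^{2\b}$ is pointwise dominated by either $\prod_{j \leq \ell} |\n_j(s;\b)\n_j(w;\b)|^2$ or $|\zeta(s)\zeta(w)|^2 \prod_{j \leq \ell} |\n_j(s;\b-1)\n_j(w;\b-1)|^2$ — and hence by the sum of the first two (non-negative) main terms of the proposition.

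On the bad event $\{v(s,w) = v\}$ for some $v \leq \ell$, the approximation is valid only for scales $j < v$, which produces the partial products $\prod_{j < v}|\n_j(s;\b)\n_j(w;\b)|^2$ and $|\zeta(s)\zeta(w)|^2 \prod_{j < v}|\n_j(s;\b-1)\n_j(w;\b-1)|^2$ appearing in the cost terms. For scales $\geq v$, by the very definition of $v$ the factor $|\p_v(s)/(50 P_v)|^{2\lceil 50 P_v\rceil} + |\p_v(w)/(50 P_v)|^{2\lceil 50 P_v\rceil}$ is at least $1$ on this event, and is large enough that $0 \leq \b \leq 1$ makes the tail product $\prod_{j \geq v}|\zeta_j(s)\zeta_j(w)|^{2\b}$ absorbable into it. Summing over $v \in \{1,\ldots,\ell\}$ produces the full collection of cost terms, completing the proof.

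The main obstacle will be making the basic approximation $|\n_j(u;\b)|^2 \asymp |\zeta_j(u)|^{2\b}$ \emph{uniform} in $j$ and $u$ over the entire stopping regime, with implied constants independent of $j$. This rests on two points: the ratio $K_j/P_j = 250$ being a fixed absolute constant (so Stirling gives uniform savings at every scale), and a careful comparison of the truncated exponential series with the Euler sub-product $\zeta_j$, which is where the freedom to write $|\zeta_j|^{2\b}$ in either of the two forms above is essential. Once this uniform comparison is secured, the stopping-time case analysis described above delivers the proposition.
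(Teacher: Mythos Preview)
There is a genuine gap. Your argument implicitly assumes a pointwise factorization $|\zeta(s)|^{2\b} \asymp \prod_{j\leq \ell} |\zeta_j(s)|^{2\b}$ (where $\zeta_j$ is your partial Euler product, essentially $e^{\p_j}$), and this is false. On the critical line $\sum_j \Re\p_j(s)$ only approximates $\log|\zeta(s)|$ on average, not pointwise: $\zeta(s)$ has zeros, and there is a nontrivial contribution from primes beyond $T_\ell$. Concretely, neither of your two claimed individual dominations on the good event holds. The first, $|\zeta(s)|^{2\b} \ll \prod_{j\leq\ell}|\n_j(s;\b)|^2 \asymp \exp\bigl(2\b\sum_j \Re\p_j(s)\bigr)$, fails whenever $|\zeta(s)|$ is much larger than this Dirichlet polynomial proxy; the second, $|\zeta(s)|^{2(\b-1)} \ll \prod_{j\leq\ell}|\n_j(s;\b-1)|^2$, is equivalent (since $\b-1\leq 0$) to $|\zeta(s)|^2 \gg \exp\bigl(2\sum_j\Re\p_j(s)\bigr)$, which fails near zeros of $\zeta$. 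Since neither $A$ nor $B$ dominates individually, you cannot conclude domination by $A+B$ from your reasoning. The same issue recurs on the bad event: there is no ``tail product $\prod_{j\geq v}|\zeta_j(s)\zeta_j(w)|^{2\b}$'' to absorb, because no such pointwise decomposition of $|\zeta|^{2\b}$ exists, and in any case the penalty factor is merely $\geq 1$ and has no reason to dominate an arbitrary product over scales $j\geq v$.

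The missing idea is Young's inequality, which is precisely how the paper (following \cite{HRS}) passes from the fractional power to an integer power of $\zeta$ without any pointwise comparison to partial Euler products. With $p=1/\b$, $q=1/(1-\b)$ one has
\[
|\zeta(s)\zeta(w)|^{2\b}\cdot \prod_{j<v} e^{2(1-\b)(\Re\p_j(s)+\Re\p_j(w))} \;\ll\; |\zeta(s)\zeta(w)|^{2} + \prod_{j<v} e^{2(\Re\p_j(s)+\Re\p_j(w))},
\]
and dividing through by the exponential factor yields the desired \emph{sum} of the two forms directly. Only after this step does one invoke the Taylor approximation $e^{2\alpha\Re\p_j(u)} \asymp |\n_j(u;\alpha)|^2$ on the event $|\p_j(u)|\leq 50P_j$ (this is lemma~1 of \cite{HRS}), and finally multiply by the penalty $\geq 1$ on the bad event. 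Your stopping-time framework and the handling of $\n_j$ versus $e^{\b\p_j}$ are both correct; it is the bridge from $|\zeta|^{2\b}$ to the two main terms that is missing, and Young's inequality is that bridge.
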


\begin{proof}
Take $1\leq v \leq \ell$ to be the smallest index such that either  $|\p_v(s)| > 50 P_v$ or $|\p_v(w)| > 50 P_v$, and set $v = \ell + 1$ if no such index exists.
Applying Young's inequality $ab \leq a^p/p + b^q/q$ for conjugate exponents $p = 1/\b$, $q = 1/(1-\b)$ gives
\[
|\zeta(s)|^{2\b}|\zeta(w)|^{2\b} \prod_{1\leq j < v} e^{2(1-\b)(\Re \p_j(s)+\Re \p_j(w))} \ll  |\zeta(s)|^{2}|\zeta(w)|^{2} + \prod_{1\leq j < v} e^{2(\Re \p_j(s)+\Re \p_j(w))},
\]
hence
\[
|\zeta(s)|^{2\b}|\zeta(w)|^{2\b} \ll  |\zeta(s)|^{2}|\zeta(w)|^{2} \prod_{1\leq j < v} e^{2(\b-1)(\Re \p_j(s)+\Re \p_j(w))} + \prod_{1\leq j < v} e^{2\b (\Re \p_j(s)+\Re \p_j(w))}.
\]
The next step is to expand the exponentials into Dirichlet polynomials.
Since $|\p_j(s)| \leq 50 P_j$ and $|\p_j(w)| \leq 50 P_j$ for $j < v$, lemma 1 of \cite{HRS} furnishes the bound
\begin{align*}
|\zeta(s)|^{2\b}|\zeta(w)|^{2\b} &\ll  |\zeta(s)|^{2}|\zeta(w)|^{2} \prod_{1\leq j < v} |\n_j(s;\b-1)|^2 |\n_j(w;\b-1)|^2 (1-e^{-P_j})^{-2} \\ &+\prod_{1\leq j < v} |\n_j(s;\b-1)|^2 |\n_j(w;\b-1)|^2 (1-e^{-P_j})^{-2}.
\end{align*}
To conclude, note that when $1\leq v \leq \ell$ we may multiply the right-hand side by
\[
\Bigg|\frac{\p_v(s) }{50 P_v}\Bigg|^{2 \lceil 50 P_v \rceil} + \Bigg|\frac{\p_v(w) }{50 P_v}\Bigg|^{2 \lceil 50 P_v \rceil} \geq 1.
\]
Since $\prod_{ 1\leq j \leq \ell} (1-e^{-P_j})^{-2} \leq 4$, the claim follows by summing over $1\leq v \leq \ell + 1$.

\end{proof}

Therefore proving Theorem \ref{thm:mainUpperBounds} is now reduced to a handful of moment calculations. 
To simplify the notation, we will write
\[
\n_{h_1,h_2,j}(s;\b) := \n_{j}(s + i h_1;\b)\n_j(s+ i h_2;\b).
\]

\begin{prop}\label{prop:upperBoundTwistedFourth}
If $\b \leq 1$ and  $|h_1|,|h_2| \leq (\log T)^\t$, then for large $T$
\begin{align*}
\frac{1}{T}\int_T^{2T}  | \zeta(\tfrac{1}{2} + i t + i &h_1)\zeta(\tfrac{1}{2} + i t + i h_2)|^2 \prod_{1\leq j \leq \ell} |\n_{h_1,h_2,j}(\tfrac{1}{2} + i t;\b-1)|^2 dt \stepcounter{equation}\tag{\theequation}\label{eqn:twisted2Nint}\\
&\ll (\log T)^{2\b^2} |\zeta(1 + 1/\log T + i(h_1- h_2))|^{2\b^2}
\end{align*}
and for $1\leq v \leq \ell$, $k \in \{1,2\}$ and $0 \leq r \leq \lceil 50 P_v \rceil$
\begin{align*} 
\frac{1}{T}\int_T^{2T}  |\zeta(\tfrac{1}{2} &+ i t + i h_1)\zeta(\tfrac{1}{2} + i t + i h_2)|^2 \prod_{1\leq j < v} |\n_{h_1,h_2,j}(\tfrac{1}{2} + i t;\b-1)|^2 |\p_v(\tfrac{1}{2} + i (t + h_k))|^{2r} dt \\
&\ll (\log T)^2 \left(\frac{(\log T_{v-1})^2}{\log T} \right)^{2\b^2-2}  |\zeta(1 + 1/\log T + i(h_1- h_2))|^{2\b^2} \left(18^r r! P_v^r \exp(P_v)\right).
\end{align*}
\end{prop}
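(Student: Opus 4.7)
My plan is to apply Theorem \ref{thm:twisted4th} with shifts $(\a_1,\a_2,\a_3,\a_4) = (ih_1, ih_2, -ih_1, -ih_2)$, which converts $|\zeta(\tfrac12+it+ih_1)\zeta(\tfrac12+it+ih_2)|^2$ into the four-fold zeta product appearing there. For (\ref{eqn:twisted2Nint}) the natural twist is
\[
A_{\bm\a}(s) = \prod_{1\leq j\leq \ell}\n_j(s+ih_1;\b-1)\n_j(s+ih_2;\b-1),
\]
so $|A_{\bm\a}(\tfrac12+it)|^2$ reproduces $\prod_j |\n_{h_1,h_2,j}(\tfrac12+it;\b-1)|^2$ in the integrand. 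For the second bound I exploit that $\p_v(\tfrac12+i(t+h_k))$ is a real quantity, whence
\[
|\p_v(\tfrac12+i(t+h_k))|^{2r}\leq |\mathcal{A}_v(\tfrac12+it)^r|^2,\qquad \mathcal{A}_v(s):=\sum_{T_{v-1}<p\leq T_v} p^{-(s+ih_k)},
\]
and take $A_{\bm\a}(s) = \prod_{j< v}\n_j(s+ih_1;\b-1)\n_j(s+ih_2;\b-1)\cdot\mathcal{A}_v(s)^r$. The hypotheses of Theorem \ref{thm:twisted4th} (Dirichlet polynomial length $\leq T^{1/11}$ and coefficients $\ll_\ve n^\ve$) are verified from the definitions of $T_j, K_j$ together with $|\b-1|,g(n)\leq 1$ and the bound $r!\leq (50\log_2 T)!\ll_\ve T^\ve$ which handles the $\mathcal A_v^r$ coefficients.

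Applying the theorem reduces each integral to a four-fold contour integral over circles $|z_j-\a_j|=3^j/\log T$, plus an error $O(T^{1-\d})$ that is absorbed. Since $|h_1-h_2|\geq 200/\log T$ exceeds the total contour radius $\sum_j 3^j/\log T\leq 81/\log T$, the factors $\zeta(1+z_1+z_4)$ and $\zeta(1+z_2+z_3)$ in $A(\bm z)$ remain analytic on the contour and contribute $|\zeta(1+i(h_1-h_2))|^2$ near $\bm z = \bm\a$, whereas $\zeta(1+z_1+z_3)$ and $\zeta(1+z_2+z_4)$ develop simple poles at $\bm z = \bm\a$ which combine with the factors $\prod(z_j-\a_k)^{-1}$ to yield a $(\log T)^2$ piece after integration. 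The arithmetic factor $F_{\bm\a}(\bm z)$ is multiplicative and splits over the disjoint prime blocks $(T_{j-1},T_j]$; each block factors further into local Euler factors at each prime, which are computed using the Schur-type combinatorial identities already exploited in \cite{HRS, HYTwisted4}. After removing the truncations $K_j$ by standard large deviation estimates (at negligible cost since $K_j = 250 P_j$), the arithmetic factor combines with the matched zeta poles of $A(\bm z)$ to produce the correction $(\log T)^{2\b^2-2}|\zeta(1+1/\log T_\ell+i(h_1-h_2))|^{2\b^2-2}$, where $1/\log T_\ell$ acts as the regularizer at the scale of the largest prime block and the exponent $2\b^2-2$ reflects the $2(\b-1)^2$ combinatorial weight summed over the two decorrelation pairings of the shifts.

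The main technical obstacle is the uniform residue bookkeeping as one shifts contours: the integrand has simple poles both from $\prod(z_j-\a_k)^{-1}$ and from the $\zeta(1+z_1+z_3)\zeta(1+z_2+z_4)$ factors, and these must be controlled by the zeros of $\Delta(z_1,z_2,-z_3,-z_4)^2$ while keeping all bounds uniform in shifts of size $(\log T)^\t$. For the second bound, the extra $\mathcal A_v(s)^r$ twist contributes, via the local Euler factor at each prime in block $v$ combined with the $F$-factor from the first $v-1$ blocks, an additional arithmetic factor bounded by $r!\, P_v^r$ (essentially the $r$-th binomial moment of the block-$v$ prime sum), while the absence of the $\n_j$ twists for $j\geq v$ leaves an untruncated Euler contribution at block $v$ bounded by $\exp(P_v)$; the residual factor $18^r$ absorbs combinatorial slack together with the loss in the crude inequality $|\p_v|^{2r}\leq |\mathcal A_v^r|^2$.
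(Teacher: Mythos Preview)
Your proposal is essentially correct and follows the paper's approach: apply Theorem~\ref{thm:twisted4th} with shifts $(ih_1,ih_2,-ih_1,-ih_2)$ and the Dirichlet polynomials you describe (including the $\mathcal A_v^r$ trick, which is exactly how the paper handles $|\p_v|^{2r}$), then remove the $K_j$-truncations via Rankin's trick and evaluate the resulting Euler products using Lemma~\ref{lem:cosPrimeSum}.

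The one place you overcomplicate is the ``uniform residue bookkeeping as one shifts contours.'' No contour-shifting is needed: the paper simply bounds each factor of the integrand pointwise on the fixed circles $|z_j-\a_j|=3^j/\log T$. Since $|h_1-h_2|\geq 200/\log T$, on these contours one has directly
\[
A(\bm z)\ll (\log T)^2 |\zeta(1+i(h_1-h_2))|^2,\qquad \Delta(z_1,z_2,-z_3,-z_4)^2\ll (\log T)^{-4}|h_1-h_2|^8,
\]
and $\int_\R Y_{\bm\a,t}Y_{-\pi\bm z,t}\,w(t/T)\,dt\ll T$; the contour volume and the factors $\prod_{j,k}(z_j-\a_k)^{-1}$ then balance against $\Delta^2$ to give the right power of $\log T$. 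So the $(\log T)^2|\zeta|^2$ piece comes not from residues but from the crude bound on $A(\bm z)$, and the $(\log T)^{2\b^2-2}|\zeta|^{2\b^2-2}$ piece comes separately from the bound on $\prod_j F_{h_1,h_2,j}(\bm z)$ via the Euler product computation (using Lemmas~\ref{lem:BEstimate} and~\ref{lem:BContinuity} to identify the local factor $1+(2\b^2-2)(1+\cos((h_1-h_2)\log p))/p+O(p^{-2})$). Your heuristic that the $2\b^2-2$ exponent ``reflects $2(\b-1)^2$ summed over two pairings'' is not quite the mechanism; it emerges from computing $a_{h_1,h_2,j}(p)B_{\bm\a}(p)+\text{conj.}+|a_{h_1,h_2,j}(p)|^2$ explicitly. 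Likewise the $18^r$ is not slack from $|\p_v|^{2r}\leq|\mathcal A_v^r|^2$ but comes from $|B_{\bm z}(n)|\ll 3^{\Omega(n)}$ combined with the combinatorial bound from \cite{HRS}.
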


\begin{prop}\label{prop:upperBoundDPMV}
If $\b \leq 1$ and $|h_1|,|h_2| \leq (\log T)^\t$, then for large $T$
\begin{equation}\label{eqn:DPUpperBound}
\frac{1}{T}\int_T^{2T}  \prod_{1\leq j \leq \ell} |\n_{h_1,h_2,j}(\tfrac{1}{2} + i t;\b)|^2 dt \ll (\log T)^{2\b^2} |\zeta(1 + 1/\log T + i(h_1- h_2))|^{2\b^2}
\end{equation}
and for $1\leq v \leq \ell$, $k \in \{1,2\}$ and $0 \leq r \leq \lceil 50 P_v \rceil$
\begin{align*} 
\frac{1}{T}\int_T^{2T}  &\prod_{1\leq j < v} |\n_{h_1,h_2,j}(\tfrac{1}{2} + i t;\b)|^2 |\p_v(\tfrac{1}{2} + i (t + h_k))|^{2r} dt \\ 
&\ll \left(\log T \right)^{2\b^2} |\zeta(1 + 1/\log T + i(h_1- h_2))|^{2\b^2} \left(r! P_v^r \right).
\end{align*}
\end{prop}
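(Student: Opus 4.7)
The plan is a Dirichlet polynomial mean value computation in the style of \cite{HRS}, adapted to the two shifts $h_1,h_2$. Expanding $\prod_{j\leq\ell}|\n_{h_1,h_2,j}(\tfrac12+it;\b)|^2$ as a Dirichlet polynomial in $t$, the mean value over $[T,2T]$ reduces to a diagonal sum (with the off-diagonal absorbed by the sparse multiplicative structure of the coefficients), and the diagonal factors as an Euler product $\prod_{p\leq T_\ell}L_p$ over primes.

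At a prime $p\in(T_{j-1},T_j]$, after discarding the tail from the $\Omega\leq K_j$ truncation (negligible since $K_j=250P_j$ greatly exceeds the typical size of $|\p_j|$), the local factor at $p$ evaluates to
\[
L_p=\sum_{m\ge 0}\frac{(\b^2/p)^m\,|1+p^{-i(h_1-h_2)}|^{2m}}{(m!)^2}=\exp\!\Bigl(\tfrac{2\b^2}{p}+\tfrac{2\b^2}{p}\Re p^{-i(h_1-h_2)}\Bigr)\bigl(1+O(1/p^2)\bigr),
\]
via the identity $\sum_{a+b=m}z^a/(a!b!)=(1+z)^m/m!$ applied twice, with $z=p^{-i(h_1-h_2)}$. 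Multiplying over $p\leq T_\ell$, Mertens' theorem handles $\prod_p\exp(2\b^2/p)\asymp(\log T_\ell)^{2\b^2}\asymp(\log T)^{2\b^2}$, and the standard approximation $\log|\zeta(\sigma+i\tau)|\approx\Re\sum_p p^{-\sigma-i\tau}$ at $\sigma=1+1/\log T_\ell$ yields $\prod_p\exp((2\b^2/p)\Re p^{-i(h_1-h_2)})\asymp|\zeta(1+1/\log T_\ell+i(h_1-h_2))|^{2\b^2}$. The hypothesis $|h_1-h_2|\geq 200/\log T$ keeps us safely off the pole of $\zeta$, completing the first bound.

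For the second bound, the factor $|\p_v(\tfrac12+i(t+h_k))|^{2r}$ is supported on primes in $(T_{v-1},T_v]$, disjoint from those supporting $\prod_{j<v}\n_{h_1,h_2,j}$; at the diagonal the two contributions decouple. The first factors give the $j<v$ analogue of the first bound (with $T_\ell$ replaced by $T_{v-1}$), while the second is the standard Khintchine/Gaussian-type moment estimate $\frac1T\int_T^{2T}|\p_v(\tfrac12+i(t+h_k))|^{2r}dt\ll r!\,P_v^r$, valid uniformly for $r\leq\lceil 50P_v\rceil$. The main technical obstacle I anticipate is controlling the off-diagonal error in the mean value computation: the naive length of $\prod_j|\n_{h_1,h_2,j}|^2$ can exceed $T$, so one must leverage the fact that the coefficient support is effectively concentrated on small $n$ (via the rapid decay of $|c_n|$ in $\Omega(n)$) rather than invoking the Montgomery--Vaughan theorem as a black box. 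The truncation tail from $\Omega\leq K_j$ is then a routine consequence of the generous choice $K_j=250P_j$.
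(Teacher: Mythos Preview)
Your approach is essentially the same as the paper's: apply the mean value theorem for Dirichlet polynomials \cite[Theorem~9.1]{IK} to $\prod_j \n_{h_1,h_2,j}$, drop the truncations $\Omega\leq K_j$ via Rankin's trick at cost $O(e^{-70P_j})$, factor the diagonal as an Euler product with local factor $1+2\b^2(1+\cos((h_1-h_2)\log p))/p+O(1/p^2)$, and then invoke Mertens together with Lemma~\ref{lem:cosPrimeSum}. Your binomial identity $\sum_{a+b=m}z^a/(a!b!)=(1+z)^m/m!$ gives the coefficient $a_{h_1,h_2,j}(p^m)=\b^m(p^{-ih_1}+p^{-ih_2})^m/m!$ and hence exactly this local factor. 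For the second bound the disjointness of prime supports makes the diagonal sum factor, and the remaining piece is the standard estimate $\sum_{\Omega(n)=r,\,p\mid n\Rightarrow p\in(T_{v-1},T_v]}(r!g(n))^2/n\leq r!P_v^r$ from \cite{HRS}.

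Your anticipated obstacle, however, is not a real one. The paper invokes Montgomery--Vaughan as a black box because the parameters $T_j,K_j$ are chosen precisely so that $\prod_j \n_{h_1,h_2,j}$ is a short Dirichlet polynomial: the companion Proposition~\ref{prop:upperBoundTwistedFourth} feeds the polynomials $\prod_j \n_{h_1,h_2,j}(s;\b-1)$, which have \emph{identical support} to yours, into the twisted fourth moment formula of Theorem~\ref{thm:twisted4th}, and that already requires length $<T^{1/11}$. So the length is automatically well below $T$, and there is no need to devise an argument based on coefficient concentration. You should simply verify the length bound directly from the parameter choices (summing $K_j\log T_j$) rather than replace the mean value theorem with something ad hoc.
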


\noindent
Before embarking on the proofs of these propositions, we first show how to deduce the upper bound in Theorem \ref{thm:shiftedMoments} and then Theorem \ref{thm:mainUpperBounds}.

\begin{proof}[Proof of Theorem \ref{thm:shiftedMoments}: upper bound case.]
Propositions \ref{prop:interpolation}, \ref{prop:upperBoundTwistedFourth}, and \ref{prop:upperBoundDPMV} imply
\begin{align*}
\frac{1}{T} \int_T^{2T} |\zeta(\tfrac{1}{2} + &i t + ih_1)\zeta(\tfrac{1}{2} + i t + ih_2)|^{2\b} ~dt \\
\ll (\log T)^{2\b^2}& |\zeta(1 + 1/\log T + i (h_1 -  h_2))|^{2\b^2} \\
\times\Bigg[1 + \sum_{1\leq v \leq \ell} 
\Bigg(\left(\frac{\log T}{\log_{v-1 }T}\right)^4
&\frac{18^{\lceil 50 P_v\rceil} \lceil 50 P_v\rceil! P_v^{\lceil 50 P_v\rceil} \exp(P_v)}{(50P_v)^{2\lceil 50 P_v\rceil}} 
+ \frac{\lceil 50 P_v\rceil! P_v^{\lceil 50 P_v\rceil}}{(50P_v)^{2\lceil 50 P_v\rceil}} \Bigg) \Bigg] .
\end{align*}
A quick calculation shows the sum over $v$ is $O(1)$, so the result follows.

\end{proof}

\begin{proof}[Proof of Theorem \ref{thm:mainUpperBounds}]
Split the range of integration in (\ref{eqn:MoMTensor}) depending on whether $|h_1-h_2| < 1/\log T$ or $|h_1-h_2| \geq 1/\log T$.
By the Laurent expansion for zeta near 1, the contribution of the region where $ |h_1-h_2| < 1/\log T$ to the integral (\ref{eqn:MoMTensor}) is $\ll (\log T)^{4\b^2 + \t - 1}$, which is admissible.
To handle the integral when $|h_1 - h_2| \geq 1/\log T$, we treat the cases $\t \leq 0$ and $\t > 0$ separately.
When $\t \leq 0$, applying the upper bound in Theorem \ref{thm:shiftedMoments} the Laurent expansion for zeta gives the bound
\[
\MoM_T(2,\b) \ll (\log T)^{2\b^2 + \t}\int_{1/\log T}^{2(\log T)^\t} \frac{dh}{h^{2\b^2}}.
\]
Therefore when $\t \leq 0$ Theorem \ref{thm:mainUpperBounds} immediately follows. 
When $\t > 0$, standard estimates for the moments of zeta to the right of the one line \cite[Theorem 7.1]{Titchmarsh} now show that 
\[
\MoM_T(2,\b) \ll (\log T)^{2\b^2 + \t}\int_{1/\log T}^{1} \frac{dh}{h^{2\b^2}} + (\log T)^{2\b^2 + 2\t}.
\]
Note the implicit constant is absolute because $\b \leq 1$.

\end{proof}

\subsection*{Proof of Proposition \ref{prop:upperBoundTwistedFourth}}

We will apply on Theorem \ref{thm:twisted4th} with $\bm{\a} = (i h_1, i h_2, -i h_1, - i h_2)$ and $w(t)$ a smooth majorant of $1_{t\in[1,2]}$ to the Dirichlet polynomials  
\begin{equation}\label{eqn:upperBdDP1}
\prod_{1\leq j \leq \ell} \n_{h_1,h_2,j}(s;\b - 1)
\end{equation}
and
\begin{equation}\label{eqn:upperBdDP2}
\prod_{1\leq j \leq v} \n_{h_1,h_2,j}(s;\b - 1) \p_v(\tfrac{1}{2}  + i (t+ h_k))^r
\end{equation}
for $k\in\{1,2\}$.
We may choose circular contours for the $z_j$ so that
\[
A(\bm{z}) \ll (\log T)^2 |\zeta(1+ 1/\log 
T + i(h_1 - h_2))|^{2},
\]
\[
\Delta(z_1,z_2,-z_3,-z_4)^2 \ll (\log T)^{-4} |h_1 - h_2 + 1/\log T|^8.
\]
Because $\Re z \ll 1/\log T$ it readily follows that
\[
\int_\R Y_{\bm{\a},t} ~ Y_{-\pi{\bm{z}},t} ~ w(t/T) ~ dt \ll T.
\]
Therefore, by multiplicativity we are left with the task of bounding the  sums
\[
F_{h_1,h_2,j}(\bm{z}) = \sum_{n,m\leq T^\eta} \frac{a_{h_1,h_2,j}(n) \overline{a_{h_1,h_2,j}(m)}}{[n,m]} B_{\bm{z}}\left(\frac{n}{(n,m)}\right) B_{\pi\bm{z}}\left(\frac{m}{(n,m)}\right)
\]
and
\[
F_{v,r,h_k}(\bm{z}) = \sum_{n,m\leq T^\eta} \frac{b_{v,r,h_k}(n) \overline{b_{v,r,h_k}(m)}}{[n,m]} B_{\bm{z}}\left(\frac{n}{(n,m)}\right) B_{\pi\bm{z}}\left(\frac{m}{(n,m)}\right),
\]
uniformly for $|z_j - \a_j| \ll 1/\log T$, where $a_{h_1,h_2,j}(n)$ are the coefficients of the Dirichlet polynomials $\n_{h_1,h_2,j}(s)$, and $b_{v,r,h_k}(n)$ are the coefficients of $\p_v(s+ i h_k)^{2r}$.

First we estimate $F_{h_1,h_2,j}(\bm{z})$. Note we can write
\[
\n_{h_1,h_2,j}(\tfrac{1}{2} + i t;\b - 1) = \sum_{\substack{p\mid n \Rightarrow p \in (T_{j-1},T_j] \\ \Omega(n) \leq K_j}} \frac{1}{n^{1/2 + i t}} \sum_{c d = n } \frac{(\b-1)^{\Omega(c) + \Omega(d)} g(c) g(d)}{c^{i h_1} d^{i h_2}},
\]
so $a_{h_1,h_2,j}(p) = (\b-1)(p^{-i h_1} + p^{-ih_2}) = (\b-1) \sigma_{i h_1, i h_2}(p)$ for $p\in(T_{j-1},T_j]$. Since $\b \leq 1$ it also follows that $|a_{h_1, h_2,j}(n)| \leq d(n)$, where $d$ is the divisor function. 
We will require the following estimates for $B_{\bm{z}}(n)$.

\begin{lemma}\label{lem:BEstimate}
For $m\geq 1$
\begin{align*}
 B_{\bm{\a}}(p^m) =  \frac{\sig_{\a_3,\a_4}(p^m) - \sig_{\a_3,\a_4}(p^{m-1}) p^{-1-\a_3-\a_4}(p^{-\a_1}+p^{-\a_2}) + \sig_{\a_3,\a_4}(p^{m-2})p^{-2-\a_1-\a_2-2\a_3-2\a_4} }{1-p^{-2-\a_1-\a_2-\a_3-\a_4}},
\end{align*}
where by convention we set $\sig_{\a_3,\a_4}(p^{-1}) = 0$.
Furthermore, for integers $n$ composed of primes at most $T^{10^{-8}}$ and $\Re \a_j \ll 1/\log T$ for each $j$
\[
|B_{\bm{\a}}(n)| \ll d_3(n),
\]
where $d_3$ is the ternary divisor function.
\end{lemma}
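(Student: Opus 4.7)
The plan is to use multiplicativity to reduce both parts to statements at a single prime power, and then compute directly. By inspection of the definition, $B_{\bm{\a}}(n) = \prod_{p^m \| n} B_{\bm{\a}}(p^m)$ is multiplicative, so both the formula and the bound reduce to corresponding statements at $p^m$.

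For the explicit formula, I would set $A = p^{-\a_1}$, $B = p^{-\a_2}$, $C = p^{-\a_3}$, $D = p^{-\a_4}$, and $x = p^{-1}$ (working in the generic case $A\neq B$, $C\neq D$ and taking limits otherwise), and use $\sigma_{\a_3,\a_4}(p^{j+m}) = (C^{j+m+1} - D^{j+m+1})/(C-D)$ to split the numerator in the definition of $B_{\bm{\a}}(p^m)$ into two geometric series. Each is summable via the elementary identity $\sum_{j\geq 0}\sigma_{\a_1,\a_2}(p^j) y^j = 1/[(1-Ay)(1-By)]$, giving
\[
\sum_{j\geq 0}\sigma_{\a_1,\a_2}(p^j)\sigma_{\a_3,\a_4}(p^{j+m}) x^j = \frac{1}{C-D}\left[\frac{C^{m+1}}{(1-ACx)(1-BCx)} - \frac{D^{m+1}}{(1-ADx)(1-BDx)}\right].
\]
The denominator sum in $B_{\bm{\a}}(p^m)$ is the Euler factor at $p$ (with $s=1$) of the classical identity $\sum_{n}\sigma_{\a_1,\a_2}(n)\sigma_{\a_3,\a_4}(n)\,n^{-s} = \prod_{i\in\{1,2\},k\in\{3,4\}}\zeta(s+\a_i+\a_k)/\zeta(2s+\a_1+\a_2+\a_3+\a_4)$, and so equals $(1-ABCDx^2)/[(1-ACx)(1-ADx)(1-BCx)(1-BDx)]$. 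Taking the ratio, the four factors $(1-ACx)(1-ADx)(1-BCx)(1-BDx)$ cancel, leaving
\[
B_{\bm{\a}}(p^m) = \frac{C^{m+1}(1-ADx)(1-BDx) - D^{m+1}(1-ACx)(1-BCx)}{(C-D)(1-ABCDx^2)}.
\]
Expanding the numerator as a polynomial in $x$ and factoring using $C^{k+1} - D^{k+1} = (C-D)\sigma_{\a_3,\a_4}(p^k)$ yields $(C-D)[\sigma_{\a_3,\a_4}(p^m) - (A+B)CD\sigma_{\a_3,\a_4}(p^{m-1})x + ABC^2D^2\sigma_{\a_3,\a_4}(p^{m-2})x^2]$. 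Dividing by $(C-D)$ and substituting back $A = p^{-\a_1}$, $B = p^{-\a_2}$, $C = p^{-\a_3}$, $D = p^{-\a_4}$, $x = p^{-1}$ gives exactly the claimed closed form.

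For the bound, the formula permits a term-by-term estimate. Under the hypotheses $p\leq T^{10^{-8}}$ and $|\Re\a_j| \ll 1/\log T$, each $|p^{\pm\a_j}| = p^{O(1/\log T)} \leq 1 + O(10^{-8})$, so the denominator $|1-p^{-2-\a_1-\a_2-\a_3-\a_4}|$ stays close to $1-p^{-2}$ and is therefore bounded below uniformly for $p\geq 2$. The numerator terms are each controlled by $|\sigma_{\a_3,\a_4}(p^k)| \leq (k+1)\max(|p^{-\a_3}|,|p^{-\a_4}|)^k \leq (k+1)\,p^{O(k/\log T)}$, yielding $|B_{\bm{\a}}(p^m)| \ll (m+1)\, p^{O(m/\log T)}$ with absolute implicit constant. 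Multiplying over $p \mid n$ gives $|B_{\bm{\a}}(n)| \ll d(n)\cdot n^{O(1/\log T)}$; in the effective range of the applications $n$ is bounded by a fixed power of $T$, so $n^{O(1/\log T)}$ contributes only a bounded constant, leaving $|B_{\bm{\a}}(n)| \ll d(n) \leq d_3(n)$.

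The whole argument is computational; the only mildly tedious step is the algebraic collapse over the common denominator and re-identification of the $\sigma_{\a_3,\a_4}(p^{m-r})$ coefficients. The bound is then an immediate consequence of the closed form, since the hypothesis $p \leq T^{10^{-8}}$ is tailored precisely to make the factor $p^{O(m/\log T)}$ negligible over the range of $n$ that actually appears in the arguments of Section~\ref{sec:upperBounds}.
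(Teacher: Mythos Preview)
Your approach is essentially the same as the paper's: for the formula you carry out directly the generating-function computation that the paper cites from lemma~6.9 of Hughes--Young, and for the bound both arguments estimate $|\sigma_{\a_3,\a_4}(p^k)|$ trivially using $|\Re\a_j|\ll 1/\log T$ and then multiply over primes.

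There is one small gap in your multiplicative step. You write $|B_{\bm\a}(p^m)| \ll (m+1)\,p^{O(m/\log T)}$ with an absolute implied constant and then conclude $|B_{\bm\a}(n)| \ll d(n)\,n^{O(1/\log T)}$; but multiplying a per-prime bound with absolute constant $C>1$ over the prime powers dividing $n$ produces an extra factor $C^{\omega(n)}$, which is not $O(1)$. The paper avoids this by tracking the shape of the constant, showing
\[
|B_{\bm\a}(p^m)| \leq d(p^m)\Bigl(1 + O\Bigl(\tfrac{\log p}{\log T} + \tfrac{1}{p}\Bigr)\Bigr),
\]
so that the product of correction factors is $\ll \exp(\log n/\log T)\cdot (3/2)^{\omega(n)}$, and then absorbs $(3/2)^{\omega(n)}$ into $d_3(n)/d(n)$. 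Your argument is easily repaired along the same lines. (Both your argument and the paper's implicitly need $\log n \ll \log T$ to dispose of the factor $n^{O(1/\log T)}$; you flag this explicitly, the paper leaves it tacit.)
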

\begin{proof}
The first formula follows from lemma 6.9 of \cite{HYTwisted4} and that 
\[
\sigma_{z_1,z_2}(p^m) = \frac{p^{-z_3(m+1)} - p^{-z_4(m+1)} }{p^{-z_3} - p^{-z_4}}.
\]
To prove the second bound, first note by assumption on the size of the shifts and the size of the primes $p$
\[
|\sigma_{\a_3,\a_4}(p^m)| \leq \sum_{ab = p^m} a^{\Re \a_3} b^{\Re \a_4} \leq (1+m)\left(1 + O\left(\frac{\log p}{\log T}\right)\right).
\]
Whence
\[
|B_{\bm{\a}}(p^m)| \leq d(p^m)\left(1 + O\left(\frac{\log p}{\log T} + \frac{1}{p}\right)\right).
\]
It now follows by assumption on $n$ that
\[
|B_{\bm{\a}}(n)| \ll d(n) \prod_{p^m \| n} \left(1 + O\left(\frac{m\log p}{\log T}\right) \right) \left(1 + O\left(\frac{1}{p}\right)\right) \ll d_3(n)
\]
where we have used $\prod_{p^m \| n} \left(1 + O\left(\frac{m\log p}{\log T}\right) \right) \ll \exp(\log n/\log T)$ and $\prod_{p \mid n} \left(1 + O\left(\frac{1}{p}\right)\right) \ll (3/2)^{\omega(n)}$, where $\omega(n)$ is the number of distinct prime divisors of $n$.

\end{proof}

The following lemma is almost identical to the proof of Lemma 24 of \cite{FHKUpper}. 
The proof is a straightforward application of Cauchy's theorem.
The only necessary modification is to use Lemma \ref{lem:BEstimate} to bound $B_{\bm{z}}(p)$ because here $z$ need not have small imaginary part.

\begin{lemma}\label{lem:BContinuity}
Let $m \geq 1$ be an integer and  $\bm{z} = (z_1,z_2,z_3,z_4)$,  $\bm{w} = (w_1,w_2,w_3,w_4)$ vectors such that $|w_j - z_j| \ll 1/\log T$ for all $j$.
Then
\[
|B_{\bm{z}}(p) - B_{\bm{w}}(p)| \ll \frac{ \log p}{\log T}.
\]
\end{lemma}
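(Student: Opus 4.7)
The plan is to view $B_{\bm\a}(p)$ via the explicit formula in Lemma \ref{lem:BEstimate} as a holomorphic function of $\bm\a$ in a suitable polydisc, and then use Cauchy's integral formula to convert control on the \emph{sup norm} of $B_{\bm\a}(p)$ into control on the difference $B_{\bm z}(p) - B_{\bm w}(p)$. The role of Lemma \ref{lem:BEstimate} is crucial precisely because $\Im z_j$ may be as large as $(\log T)^\t$: the explicit formula for $B_{\bm\a}(p^m)$ depends on $p^{-\a_j}$ only through $p^{-\Re \a_j}$, and so one still has $|B_{\bm\a}(p)|\ll 1$ uniformly whenever the real parts of the shifts are $\ll 1/\log T$, regardless of how large the imaginary parts are.

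Concretely, I would first telescope
\[
B_{\bm z}(p) - B_{\bm w}(p) = \sum_{j=1}^{4}\bigl(B_{\bm u^{(j)}}(p) - B_{\bm u^{(j-1)}}(p)\bigr),
\]
where $\bm u^{(0)} = \bm w$ and $\bm u^{(j)}$ agrees with $\bm z$ in the first $j$ coordinates and with $\bm w$ in the rest, so that each summand is a single-variable difference. For fixed $j$, set $h_j(\zeta) := B_{\bm v_j(\zeta)}(p)$ where $\bm v_j(\zeta)$ equals $\bm u^{(j-1)}$ with its $j$-th coordinate replaced by $\zeta$; the formula in Lemma \ref{lem:BEstimate} shows $h_j$ is holomorphic on a disc containing both $w_j$ and $z_j$. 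Since $|z_j-w_j|\le 200/\log T$, Cauchy's integral formula gives
\[
h_j(z_j) - h_j(w_j) = \frac{z_j-w_j}{2\pi i} \oint_{|\zeta-w_j| = r} \frac{h_j(\zeta)}{(\zeta-z_j)(\zeta-w_j)}\,d\zeta
\]
for any $r$ on which $h_j$ is holomorphic and bounded.

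The key parameter choice is $r \asymp 1/\log p$. With this choice, for $\zeta$ on the contour one has $|\Re \zeta| \ll 1/\log T + 1/\log p$, which is still small enough that Lemma \ref{lem:BEstimate} yields $|h_j(\zeta)| \ll 1$. Then the Cauchy integral above is bounded by $|z_j - w_j|\cdot \sup|h_j|/r \ll (\log p)/\log T$, and summing over the four coordinates gives the stated inequality.

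The main obstacle, and the only real difference from the argument of Lemma 24 of \cite{FHKUpper}, is verifying that the sup-norm bound on the Cauchy contour still holds in our regime; this is where the large imaginary parts of the $z_j$ would cause trouble with the simpler bound used in \cite{FHKUpper}, and where Lemma \ref{lem:BEstimate} takes over. Once that input is in place, the rest is essentially a single application of Cauchy's formula together with the mean-value-type estimate $|f(z)-f(w)| \leq |z-w|\sup|f'|$.
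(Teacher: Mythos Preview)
Your approach is essentially identical to the paper's: a straightforward application of Cauchy's theorem, with Lemma~\ref{lem:BEstimate} supplying the uniform bound $|B_{\bm\a}(p)|\ll 1$ on the contour (which is the one new ingredient needed because the imaginary parts of the shifts are not small). One small point: on the contour $|\zeta-w_j|=r\asymp 1/\log p$ the real part of $\zeta$ is only $\ll 1/\log p$ rather than $\ll 1/\log T$, so strictly speaking you are using the explicit formula in Lemma~\ref{lem:BEstimate} (from which $|B_{\bm\a}(p)|\ll 1$ follows whenever $|p^{-\a_j}|\ll 1$) rather than the stated $d_3$ bound --- but this is exactly the intended use.
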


We will now bound $F_{h_1,h_2,j}(\bm{z})$ by a product over primes.
To accomplish this, we will drop the terms with $\Omega(m),\Omega(n)\leq K_j$ and use Rankin's trick to show the error incurred is negligible.
First note that $\exp(\Omega(n) + \Omega(n) - K_j) \geq 1$ when either $\Omega(n)$ or $\Omega(m)$ exceed $K_j$.
Now using Lemma \ref{lem:BEstimate} and that $|\b-1|\leq 1$, we see that adding in the terms of $F_{h_1,h_2,j}(\bm{z})$ with $\Omega(m) > K_j$ or $\Omega(n) > K_j$ contributes an error of at most
\begin{align*}
    &\ll e^{-K_j} \sum_{p| m,n \Rightarrow p \in (T_{j-1},T_j]} \frac{(|\b-1|e)^{\Omega(n)+ \Omega(m)}}{[n,m]} d_3(n)d_3(m)\\
    &\ll e^{-K_j} \prod_{T_{j-1} < p \leq T_j} \left(1 + \frac{6e+ 9e^2}{p} + O\left(\frac{1}{p^2}\right) \right) \ll e^{-70 P_j}
\end{align*}
Therefore upon dropping the assumption that $\Omega(m),\Omega(n)\leq K_j$ and using Lemmas \ref{lem:BEstimate}, \ref{lem:BContinuity}, and that $|a_{h_1,h_2}(n)| \leq d(n)$, we may bound $F_{h_1,h_2,j}(\bm{z})$ by
\begin{align*}
 \prod_{T_{j-1} < p \leq T_j} \bigg(1 + \frac{a_{h_1,h_2,j}(p)B_{ih_1,ih_2,-ih_1,-ih_2}(p) + a_{-h_1,-h_2,j}(p)B_{-ih_1,-ih_2,ih_1,ih_2}(p) + |a_{h_1,h_2}(p)|^2}{p} \\
 + O\left(\frac{\log p}{p \log T} +  \frac{1}{p^2} \right)\bigg) + O(e^{-70 P_j}) \qquad \qquad \qquad
\end{align*} 
By Lemma \ref{lem:BEstimate}, the numerator of the second term in parentheses equals
\[
\frac{(\b-1)(\b-1 + p + \b p)}{1+ p} (2 + 2\cos((h_1-h_2)\log p)) = (2\b^2 - 2)(1 + \cos((h_1-h_2)\log p)) + O\left(\frac{1}{p}\right).
\]
Therefore, we must control the products
\begin{align*}
 \prod_{T_{j-1} < p \leq T_j} \bigg(1 + \frac{ (2\b^2 - 2)(1 + \cos((h_1-h_2)\log p)) }{p} + O\left(\frac{\log p}{p \log T} +  \frac{1}{p^2} \right)\bigg) 
\end{align*} 
To this end, we will need the following special case of lemma 3.2 of \cite{Koukoulopoulos}.
\begin{lemma}\label{lem:cosPrimeSum}
Given $h \in \R$ and $X \geq 2$ 
\begin{align*}
\sum_{p\leq X} &\frac{\cos(h \log p)}{p}  = \log|\zeta(1 + 1/\log X + i h)| + O(1).
\end{align*}
\end{lemma}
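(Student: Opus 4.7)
The plan is to relate the sum over primes directly to $\log\zeta$ via the Euler product and then show the truncation at $X$ costs only $O(1)$. Write $\cos(h\log p) = \Re(p^{-ih})$, so
\[
\sum_{p\leq X} \frac{\cos(h\log p)}{p} = \Re \sum_{p\leq X} \frac{1}{p^{1+ih}}.
\]
On the other hand, taking logarithms of the Euler product and setting $\sigma = 1 + 1/\log X$, one has
\[
\log\zeta(\sigma + ih) = \sum_p \frac{1}{p^{\sigma+ih}} + \sum_p \sum_{k\geq 2} \frac{1}{k p^{k(\sigma+ih)}},
\]
and the double sum is bounded absolutely by $\sum_p p^{-2\sigma} \ll 1$. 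Taking real parts,
\[
\log|\zeta(\sigma + ih)| = \Re\sum_p \frac{1}{p^{\sigma+ih}} + O(1),
\]
so it suffices to show
\[
\Re\sum_p \frac{1}{p^{\sigma+ih}} = \Re\sum_{p\leq X}\frac{1}{p^{1+ih}} + O(1).
\]

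The difference splits as $\Re \sum_{p\leq X} p^{-ih}(p^{-\sigma}-p^{-1}) + \Re\sum_{p>X} p^{-\sigma - ih}$. For the first piece, use $|p^{-\sigma} - p^{-1}| = p^{-1}|1 - p^{-1/\log X}| \leq (\log p)/(p\log X)$, so by Mertens
\[
\Bigl|\sum_{p\leq X} p^{-ih}(p^{-\sigma}-p^{-1})\Bigr| \leq \frac{1}{\log X}\sum_{p\leq X}\frac{\log p}{p} = O(1).
\]
For the tail, apply partial summation together with the prime number theorem: with the substitution $u = X^t$,
\[
\sum_{p>X} \frac{1}{p^{\sigma}} \ll \int_X^\infty \frac{du}{u^{\sigma}\log u} = \int_1^\infty \frac{e^{-t}}{t}\,dt = O(1),
\]
since $\sigma - 1 = 1/\log X$ makes $u^{\sigma - 1} = e^t$. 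Trivially $|\sum_{p>X} p^{-\sigma-ih}| \leq \sum_{p>X} p^{-\sigma}$, giving the desired bound.

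Combining these pieces yields the identity up to $O(1)$. The only non-routine input is the prime number theorem (used in the tail estimate via partial summation) and Mertens' theorem $\sum_{p\leq X}\log p/p = \log X + O(1)$; both are standard, so there is no substantial obstacle. The most delicate point is ensuring the constants are absolute and uniform in $h$, but this is automatic since all bounds above depend only on trivial estimates for $|p^{-ih}| = 1$ and on $X$ through $\sigma-1 = 1/\log X$.
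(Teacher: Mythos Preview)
Your proof is correct and follows essentially the same approach as the paper: both expand $\log|\zeta(1+1/\log X+ih)|$ via the Euler product, discard the prime-power terms as $O(1)$, and then compare the resulting sum to $\sum_{p\le X}\cos(h\log p)/p$ by bounding the tail $\sum_{p>X}p^{-1-1/\log X}$ and using $|1-p^{-1/\log X}|\le (\log p)/\log X$ together with Mertens' estimate for the small primes. The only cosmetic difference is that you justify the tail bound via an explicit integral (for which Chebyshev's estimate already suffices in place of the prime number theorem), while the paper simply asserts $\sum_{p>X}p^{-1-1/\log X}\ll 1$.
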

\begin{rmk}
While similar results have appeared in previous literature, this formulation is due to Granville and Soundararajan \cite{GS}.
\end{rmk}
\begin{proof}
Because the Euler product of $\zeta(s)$ is convergent for $\Re s > 1$ we may write
\begin{align*}
\log|\zeta(1 + 1/\log X + i h)| &= \Re \left(\sum_{p} \frac{1}{p^{1+1/\log X + i h}} + \sum_p \sum_{m\geq 2} \frac{1}{mp^{m(1+1/\log X + i h)}}\right) \\
&= \sum_{p} \frac{\cos(h \log p)}{p^{1+1/\log X}} + O(1).
\end{align*}
The primes larger than $X$ contribute at most
\[
\sum_{p > X} \frac{1}{p^{1+ 1/\log X}}  \ll 1. 
\]
For $p \leq X$ the mean value theorem gives  $p^{1/\log X} = 1 + O(\log p /\log X)$, so the claim follows from Merten's first estimate $\sum_{p\leq X} \frac{\log p}{p} = \log X + O(1).$
\end{proof}

Therefore uniformly for $|z_j - \a_j|\ll 1/\log T$, the product $\prod_{1\leq j < v}  F_{h_1,h_2,j}(\bm{z})$ is of order at most
\begin{align*}
\prod_{1\leq j < v} \Bigg( \prod_{T_{j-1} < p \leq T_j} \bigg(1 + \frac{ (2\b^2 - 2)(1 + \cos((h_1-h_2)\log p)) }{p} &+ O\left(\frac{\log p}{p \log T} +  \frac{1}{p^2} \right)\bigg) + O(e^{-70 P_j}) \Bigg) \\ 
\ll (\log T_{v-1})^{2\b^2 - 2} \left(\frac{\log T_{v-1}}{\log T_{\ell}} \right)^{2\b^2 - 2}|\zeta(1 &+ 1/\log T_\ell + i(h_1-h_2))|^{2\b^2 - 2}
\\
\ll \left(\frac{(\log T_{v-1})^2}{\log T_{\ell}} \right)^{2\b^2 - 2} |\zeta(1 + 1/\log T_\ell &+ i(h_1-h_2))|^{2\b^2 - 2}.
\end{align*}
Since $\log T_\ell \asymp \log T$, this proves the first bound in Proposition \ref{prop:upperBoundTwistedFourth}.
All that remains is to estimate the $F_{v,r,h_k}(\bm{z})$. 
Note that $|d_3(n)|\ll 3^{\Omega(n)}$ and we can trivially estimate $|n^{-ih_k}| \leq 1$, so
\[
|F_{v,r,h_k}(\bm{z})| \leq 9^r \sum_{\substack{p|m,n \Rightarrow p\in (T_{v-1},T_v]\\ \Omega(n) = \Omega(m) = r}} \frac{r!^2 g(n)g(m)}{[n,m]}.
\]
In the proof of proposition 3 of \cite{HRS}, it was shown that the right-hand side of is at most $18^r r! P_v^r \exp (P_v)$, which concludes the proof of the second bound of Proposition \ref{prop:upperBoundTwistedFourth}.

\subsection*{Proof of Proposition \ref{prop:upperBoundDPMV}}

To handle the first bound in Proposition \ref{prop:upperBoundDPMV},  we will use the mean value theorem for Dirichlet polynomials \cite[Theorem 9.1]{IK} in place of Theorem \ref{thm:twisted4th}.
We will abuse of notation and also denote the coefficients of $N_{h_1,h_2,j}(s;\b)$ by $a_{h_1,h_2,j}(n)$ now with $\b$ in place of $\b-1$. Therefore
\begin{align*}
\frac{1}{T}\int_T^{2T}  \prod_{1\leq j \leq \ell} |\n_{h_1,h_2,j}(\tfrac{1}{2} + i t;\b)|^2 dt \ll \prod_{1\leq j \leq \ell} \sum_{\substack{p\mid n \Rightarrow p \in (T_{j-1},T_j] \\ \Omega_j(n) \leq K_j \forall j}} \frac{ |a_{h_1,h_2,j}(n)|^2}{n}.
\end{align*}
The proof is similar to the proof of Proposition \ref{prop:upperBoundTwistedFourth}, but simpler because there is no need to use Rankin's trick because all the terms we will discard are positive. 
The argument in the proof of Proposition \ref{prop:upperBoundTwistedFourth} now gives the bound
\begin{align*}
\ll \prod_{1\leq j \leq \ell}  \prod_{T_{j-1} < p \leq T_j} &\left(1 + \frac{ 2\b^2 (1 + \cos((h_1-h_2)\log p)) }{p} + O\left(\frac{\log p}{p \log T} +  \frac{1}{p^2} \right) \right)  \\ 
&\ll (\log T)^{2\b^2} |\zeta(1 + 1/\log T + i(h_1-h_2))|^{2\b^2}.
\end{align*}
To prove the second bound of Proposition \ref{prop:upperBoundDPMV}, note that the mean value theorem for Dirichlet polynomials and the same reasoning as the proof of Proposition \ref{prop:upperBoundTwistedFourth}  now gives a bound of
\begin{align*}
\ll \prod_{1\leq j < v} \prod_{T_{j-1} < p \leq T_j} &\left(1 + \frac{ 2\b^2 (1 + \cos((h_1-h_2)\log p)) }{p} + O\left(\frac{\log p}{p \log T} +  \frac{1}{p^2} \right)\right) \sum_{\substack{p| n \Rightarrow p \in (T_{v-1}, T_v]\\ \Omega(n) = r}} \frac{(r! g(n))^2}{n}
\\
&\ll  \left(\log T \right)^{2\b^2} |\zeta(1 + 1/\log T+ i(h_1-h_2))|^{2\b^2} (r! P_v^r).
\end{align*}
Here we have used that the sum over $n$ above is bounded by $r! P_v^r$, which is a porism of proposition 2 of \cite{HRS}.
This concludes the proofs of Propositions \ref{prop:upperBoundTwistedFourth} and  \ref{prop:upperBoundDPMV}, so Theorem \ref{thm:shiftedMoments} hence Theorem \ref{thm:mainUpperBounds} follows. \qed

\section{Lower Bounds}\label{sec:lowerBounds}

Theorem \ref{thm:mainLowerBound} follows by integrating the lower bound given by Theorem \ref{thm:shiftedMoments} over the range $0 \leq |h_1 - h_2|  \leq 2(\log T)^\t$ using the Laurent expansion of $\zeta$ and standard moment estimates for $\zeta$ to the right of the one line.
We will first prove Theorem \ref{thm:shiftedMoments}  in the case $\b \leq 1$.

\subsection{Proof of Theorem \ref{thm:shiftedMoments} for $\b \leq 1$}

Throughout this subsection, we will let $T_0 = e^2$, let $\ell$ be the largest integer such that $\log_\ell T \geq 10^4,$ and for $1\leq j \leq \ell$ set
\[
T_j = \exp\left(\frac{\b \log T}{(\log_j T)^2}\right)
\]
and $K_j = 250 P_j$.
With this choice of parameters, $\n(s;\a)$ is a Dirichlet polynomial of length at most $ T^{\b/18}$.
Inspired by the method of Heap and Soundararajan \cite{HS}, our key inequality is the following consequence of Hölder's inequality:

\begin{align} \label{eqn:InterpolB<1} 
\Bigg|\int_\R \zeta(\tfrac{1}{2} + it + i h_1) \zeta(\tfrac{1}{2} + it &+ i h_2) \n(\tfrac{1}{2}+ i t + i h_1;\b-1) \n(\tfrac{1}{2}+ i t + i h_2;\b-1) \\
&\quad\times \n(\tfrac{1}{2} - i t - i h_1;\b) \n(\tfrac{1}{2}- i t - i h_2;\b) w(t/T)~ dt  \Bigg| \nonumber \\
\leq \bigg(\int_\R &|\zeta(\tfrac{1}{2} + it + i h_1)  \zeta(\tfrac{1}{2} + it + i h_2)|^{2\b} w(t/T) ~dt\bigg)^{\tfrac{1}{2}} \nonumber  \\
\times\bigg(\int_\R |\zeta(\tfrac{1}{2} + it + i h_1) &\zeta(\tfrac{1}{2} + it + i h_2)|^2 \nonumber  \\
\times |\n(\tfrac{1}{2}+ i t &+ i h_1;\b-1)\n(\tfrac{1}{2}+ i t + i h_2;\b-1)|^2 w(t/T) ~ dt\bigg)^{\tfrac{1-\b}{2}} \nonumber \\ 
\times\bigg(\int_\R |\n(\tfrac{1}{2}+ i t + i h_1;\b-1)&\n(\tfrac{1}{2} + i t + i h_2;\b-1)|^2 \nonumber\\
&\times |\n(\tfrac{1}{2}+ i t + i h_1;\b)\n(\tfrac{1}{2}+ i t + i h_2;\b)|^{2/\b} w(t/T) ~ dt \bigg)^{\tfrac{\b}{2}}, \nonumber
\end{align}
 
\noindent 
where $0\leq w(t)\leq 1$ is a smooth function supported on the interval $[1.1, 1.9]$ with $w(t) = 1$ for $t\in [1.2,1.8]$, say.
Note that Proposition \ref{prop:upperBoundTwistedFourth} gives the upper bound  
\begin{align*}
\int_\R |\zeta(\tfrac{1}{2} + it + i h_1) \zeta(\tfrac{1}{2} + it + i h_2)|^2  &|\n(\tfrac{1}{2}+ i t + i h_1;\b-1)\n(\tfrac{1}{2}+ i t + i h_2;\b-1)|^2 w(t/T) ~dt
\\
&\ll T(\log T)^{2\b^2} |\zeta(1+1/\log T + i(h_1-h_2))|^{2\b^2}
\end{align*}
so we are left with two moment computations. 

\begin{rmk}
It is also natural to try to prove Theorem \ref{thm:mainLowerBound} by computing the twisted  moment
\begin{align*}
\int_\R \zeta(\tfrac{1}{2} + it + i h_1) \zeta(\tfrac{1}{2} - it - i h_2) &\n(\tfrac{1}{2}+ i t + i h_1;\b-1) \n(\tfrac{1}{2}+ i t + i h_2;\b) \\
&\times \n(\tfrac{1}{2} - i t - i h_1;\b) \n(\tfrac{1}{2}- i t - i h_2;\b-1) w(t/T)~ dt.
\end{align*}
While this would allow us to use Theorem \ref{thm:twisted2nd},
it seems more difficult to evaluate this integral asymptotically.
\end{rmk}

Before carrying out these computations, it will be convenient to introduce some notation.
For $1\leq j \leq \ell$, we set
\[
A_{h_1,h_2,j}(\tfrac{1}{2} + it) :=   \n_{j}(\tfrac{1}{2} + it + i h_1;\b-1)\n_j(\tfrac{1}{2} + it + i h_2;\b-1),
\]
\[
B_{h_1,h_2,j}(\tfrac{1}{2} + it) :=   \n_{j}(\tfrac{1}{2} + it + i h_1;\b)\n_j(\tfrac{1}{2} + it + i h_2;\b).
\]
These are Dirichlet polynomials supported on integers $n$  such that $\Omega(n) \leq K_j$ and $n$ is composed of primes $p\in (T_{j-1},T_j]$ with coefficients
\[
a_{h_1,h_2,j}(n) :=  \sum_{cd = n} \frac{(\b-1)^{\Omega(c) + \Omega(d)}  g(c)g(d)}{c^{i h_1} d^{i h_2}},
\]
\[
b_{h_1,h_2,j}(n) := \sum_{cd = n} \frac{\b^{\Omega(c) + \Omega(d)} g(c)g(d)}{c^{i h_1} d^{i h_2}}.
\]
Additionally, for $a = 1,2$ define 
\[
M_a(t) = |\n(\tfrac{1}{2}+ i t + i h_a;\b-1)| |\n(\tfrac{1}{2}+ i t + i h_a;\b)|^{1/\b}.
\]
The lower bound of Theorem \ref{thm:shiftedMoments} will follow from the following two propositions.

\begin{prop}\label{prop:lowerTwistedB<1}
Uniformly in $|h_1|, |h_2| \leq (\log T)^\t$
\begin{align*}
\Bigg|\frac{1}{T}\int_\R \zeta(\tfrac{1}{2} + it + i h_1) \zeta(\tfrac{1}{2} - it - i h_2) \prod_{1\leq j\leq \ell} &A_{h_1,h_2,j}(\tfrac{1}{2}+i t)\overline{B_{h_1,h_2,j}(\tfrac{1}{2}+i t)} w(t/T) ~dt ~\Bigg| \\
&\gg_\b (\log T)^{2\b^2} |\zeta(1+1/\log T +i(h_1-h_2))|^{2\b^2}.
\end{align*}
\end{prop}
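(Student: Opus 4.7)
The plan is to apply Theorem~\ref{thm:twisted2nd} with $\a_1 = ih_1$, $\a_2 = -ih_2$ and Dirichlet polynomials $A_{\a_1,\a_2}(s) = \prod_{j \leq \ell} A_{h_1,h_2,j}(s)$ and $B_{\a_1,\a_2}(s) = \prod_{j \leq \ell} B_{h_1,h_2,j}(s)$, so that the integrand in the proposition matches precisely the input of that theorem. The hypotheses must be checked: with our choice of $T_j$ and $K_j$ the total length of $A, B$ is at most $T^{\b/18}$, so $\eta < 1/2$, and the coefficients are divisor-bounded, hence $\ll_\ve n^\ve$. The theorem then reduces the evaluation to a double contour integral along $|z_j - \a_j| = 3^j/\log T$, up to a negligible $O(T^{1-\d})$ error.

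The next step is to extract the main term of the contour integral by residues. The factor $\zeta(1+z_1+z_2)$ has no pole inside either contour because $|\a_1 + \a_2| = |h_1 - h_2| \geq 200/\log T$. Depending on whether $|\a_1 - \a_2| = |h_1+h_2|$ is small or large compared to the contour radii, I pick up residues either only at $(z_1, z_2) = (\a_1, \a_2)$ or at additional swapped pole combinations such as $(\a_2,\a_1)$. The $t$-integral $\int_\R Y_{\a_1,\a_2,t} Y_{-z_2,-z_1,t} w(t/T)\, dt$ behaves like $T\,(T/2\pi)^{(z_1+z_2-\a_1-\a_2)/2}$ times a bounded factor, and its Taylor expansion about $z = \a$ supplies additional powers of $\log T$ through differentiation during the residue calculation, especially in the near-coincident-pole regime. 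The resulting main term has size $T \cdot \zeta(1+i(h_1-h_2)) \cdot F_{\a_1,\a_2}(\a_1,\a_2)$, possibly enhanced by extra $\log T$ factors.

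The central arithmetic step is to estimate $|F_{\a_1,\a_2}(\a_1,\a_2)|$ from below via its Euler product, mirroring the upper-bound analysis of Proposition~\ref{prop:upperBoundTwistedFourth}. Rankin's trick removes the truncation $\Omega(n), \Omega(m) \leq K_j$ at cost $\ll e^{-70 P_j}$ per interval, so $F$ factors as a product over primes $p \leq T_\ell$ with local factor
\[
L_p(\a_1,\a_2) = 1 + \frac{(2\b^2-1)\bigl(1+\cos((h_1-h_2)\log p)\bigr)}{p} - \frac{i(2\b-1)\sin((h_1-h_2)\log p)}{p} + O\!\left(\frac{1}{p^2}\right).
\]
Summing $\Re \log L_p$ over $p \leq T_\ell$ via Lemma~\ref{lem:cosPrimeSum} and Mertens' estimate yields the evaluation $|F_{\a_1,\a_2}(\a_1,\a_2)| \asymp_\b (\log T)^{2\b^2-1}|\zeta(1 + 1/\log T_\ell + i(h_1-h_2))|^{2\b^2-1}$.

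The main obstacle is verifying that the product of the contour residues with the $F$-evaluation really achieves the target $(\log T)^{2\b^2}$ uniformly in $h_1, h_2$, rather than the smaller $(\log T)^{2\b^2-1}$ suggested by naive single-residue accounting. The missing factor of $\log T$ must come either from the Taylor expansion of the $t$-integral in the near-coincident pole regime, or by constructively combining the residues at $(\a_1,\a_2)$ and $(\a_2,\a_1)$ when both lie inside their respective contours. I would also need to check that the imaginary part of $\log F$ does not conspire with $\arg \zeta(1+i(h_1-h_2))$ to cause cancellation in the final product. Handling this delicate uniformity across the entire range of shifts, and in particular at the transition between the two pole regimes, is the technically most demanding part of the argument.
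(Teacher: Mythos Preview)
Your approach has a genuine gap that you correctly flag but do not close: the missing factor of $\log T$. Applying Theorem~\ref{thm:twisted2nd} with $\a_1=ih_1$, $\a_2=-ih_2$ and taking the residue at $(z_1,z_2)=(\a_1,\a_2)$ produces a main term of size $T\,|\zeta(1+i(h_1-h_2))|\,|F_{\a_1,\a_2}(\a_1,\a_2)|$, and your Euler-product computation giving local factor $(2\b^2-1)(1+\cos((h_1-h_2)\log p))/p$ is correct, so $|F_{\a_1,\a_2}(\a_1,\a_2)|\asymp(\log T)^{2\b^2-1}|\zeta(1+1/\log T_\ell+i(h_1-h_2))|^{2\b^2-1}$. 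For shifts with $|h_1-h_2|\asymp 1$ both zeta values on the $1$-line are $\asymp 1$ and you obtain $T(\log T)^{2\b^2-1}$, a full power of $\log T$ short. None of your proposed recoveries work: at the principal residue the $Y$-integral is exactly $T\|w\|_1$ with no logarithm; the swapped residue $(\a_2,\a_1)$ lies inside the contours only when $|h_1+h_2|=|\a_1-\a_2|<O(1/\log T)$, a condition on $h_1+h_2$ that is completely unrelated to the hypothesis on $|h_1-h_2|$ and fails generically; and since the poles are simple and well separated there is no Taylor expansion to harvest. The paper itself, in the Remark following Proposition~\ref{prop:lowerDPMVB<1}, explicitly notes that the Theorem~\ref{thm:twisted2nd} route ``seems more difficult to evaluate asymptotically.''

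The paper proceeds differently. First, the integrand actually has $\zeta(\tfrac{1}{2}+it+ih_2)$ rather than $\zeta(\tfrac{1}{2}-it-ih_2)$ (the proposition statement contains a sign typo; compare the left side of~\eqref{eqn:InterpolB<1} and the definition of $\I(\b,h_1,h_2)$ in the proof). With both zetas carrying $+it$, the paper writes down a \emph{separate} approximate functional equation for $\zeta(\tfrac{1}{2}+\a_1+it)\zeta(\tfrac{1}{2}+\a_2+it)$ (the second Lemma~\ref{lem:AFE2nd}, with the tilded quantities $\wt V,\wt X$). The crucial gain is that the dual sum now carries a factor $\wt X_{h_1,h_2,t}\approx e^{-if(t)}$ with $f'(t)\gg\log t$, so a single integration by parts kills it entirely. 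The surviving diagonal becomes $\sum_{h\mid k}a_{h_1,h_2,j}(h)\overline{b_{h_1,h_2,j}(k)}\sigma_{h_1,h_2}(k/h)/k$; the extra $\sigma_{h_1,h_2}$ combines with $a_{h_1,h_2,j}(p)=(\b-1)\sigma_{h_1,h_2}(p)$ and $b_{h_1,h_2,j}(p)=\b\sigma_{h_1,h_2}(p)$ to give local factor $2\b^2(1+\cos((h_1-h_2)\log p))/p$ --- the full $2\b^2$ rather than your $2\b^2-1$. Lemma~\ref{lem:cosPrimeSum} then yields $(\log T)^{2\b^2}|\zeta(1+1/\log T_\ell+i(h_1-h_2))|^{2\b^2}$ directly, with nothing left to recover.
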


\begin{prop}\label{prop:lowerDPMVB<1}
Uniformly in $|h_1|, |h_2| \leq (\log T)^\t$ 
\[
\frac{1}{T}\int_\R M_1 M_2(t)^2 w(t/T) dt \ll_\b (\log T)^{2\b^2} |\zeta(1+1/\log T +i(h_1-h_2))|^{2\b^2}.
\]
\end{prop}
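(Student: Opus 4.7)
\textbf{Proof proposal for Proposition \ref{prop:lowerDPMVB<1}.} The strategy is to compare $M_1(t)^2 M_2(t)^2$ to $\prod_{j\leq\ell}|\n_{h_1,h_2,j}(\tfrac{1}{2}+it;\b)|^2$, whose mean value has already been bounded in the first estimate of Proposition \ref{prop:upperBoundDPMV}. The heuristic is that since $\n_j(s;\a)$ is a truncation of $\exp(\a\p_j(s))$, writing $s_a = \tfrac{1}{2}+it+ih_a$ we have formally
\[
M_a(t)^2 = |\n(s_a;\b-1)|^2|\n(s_a;\b)|^{2/\b} \approx |\exp((\b-1)\p(s_a))|^2|\exp(\b\p(s_a))|^{2/\b} = \exp(2\b\Re\p(s_a)) \approx |\n(s_a;\b)|^2,
\]
so $M_1^2M_2^2$ ought to be comparable to $\prod_j |\n_{h_1,h_2,j}(s;\b)|^2$. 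The non-integer exponent $2/\b$ prevents a direct Dirichlet polynomial expansion, so this comparison must be made rigorous through a good-event analysis.

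\textbf{Step 1 (good event).} Let $\mathcal G$ be the set of $t$ for which $|\p_j(\tfrac{1}{2}+it+ih_a)| \leq K_j/(10\b)$ for every $1\leq j \leq \ell$ and $a\in\{1,2\}$. By lemma 1 of \cite{HRS}, on $\mathcal G$ each factor satisfies $\n_j(s;\a) = \exp(\a\p_j(s))(1+O(e^{-K_j/2}))$ uniformly for $|\a|\leq 1/\b$, so multiplying these approximations over $j$ and $a$ gives the pointwise bound
\[
M_1(t)^2M_2(t)^2 \ll_\b \prod_{1\leq j \leq \ell}|\n_{h_1,h_2,j}(\tfrac{1}{2}+it;\b)|^2 \qquad (t\in\mathcal G).
\]
Integrating against $w(t/T)$ and invoking the first bound of Proposition \ref{prop:upperBoundDPMV} shows the contribution of $\mathcal G$ is $\ll_\b (\log T)^{2\b^2}|\zeta(1+1/\log T_\ell+i(h_1-h_2))|^{2\b^2}$, which is exactly the claimed bound.

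\textbf{Step 2 (bad event).} Partition the complement $\R\setminus\mathcal G$ into sets $\mathcal G_v$, $1\leq v\leq\ell$, where $v$ is the smallest index at which the inequality defining $\mathcal G$ fails. On $\mathcal G_v$ we insert the switch
\[
1_{\mathcal G_v}(t) \leq \sum_{a=1,2}\left(\frac{10\b\,|\p_v(\tfrac{1}{2}+it+ih_a)|}{K_v}\right)^{2R_v}
\]
with $R_v = \lceil 25P_v \rceil$. For $j<v$ we are still on the good event, so those factors of $M_1^2M_2^2$ can be replaced, as in Step 1, by $|\n_{h_1,h_2,j}(s;\b)|^2$. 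For $j\geq v$ we use the crude pointwise bound $|\n_j(s;\a)| \leq \exp(|\a||\p_j(s)|)$, which yields
\[
\prod_{j\geq v}|\n_j(s_a;\b-1)|^2|\n_j(s_a;\b)|^{2/\b} \leq \prod_{j\geq v}\exp(2|\p_j(s_a)|) \leq \prod_{j\geq v}\bigl(e^{2\p_j(s_a)}+e^{-2\p_j(s_a)}\bigr).
\]
Expanding each $e^{\pm 2\p_j}$ as a product over primes in $(T_{j-1},T_j]$ produces genuine Dirichlet polynomials whose mean values can be controlled by the mean value theorem for Dirichlet polynomials; the $j<v$ factors together with the switch $|\p_v|^{2R_v}$ can be handled via the second bound in Proposition \ref{prop:upperBoundDPMV} (or directly by the mean value theorem). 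The factor $(K_v/(10\b))^{-2R_v}$ from the switch combines with the moment bound $R_v!\,P_v^{R_v}$ for $|\p_v|^{2R_v}$ and with an $\exp(O(\sum_{j\geq v}P_j))$ contribution from the crude $j\geq v$ bounds; with the choices $K_j=250P_j$ and $R_v=\lceil 25 P_v\rceil$, Stirling's formula leaves exponential savings in $P_v$, so the contribution from $\mathcal G_v$ is summable over $v$ and absorbed into the bound from Step 1.

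\textbf{Main obstacle.} The technical core is Step 2: since $2/\b$ is not an integer we cannot expand $|\n_j(s;\b)|^{2/\b}$ as a Dirichlet polynomial, so we are forced to rely on the pointwise exponential bound off the good event. One must check that the factorial gain $R_v!$ coming from the switch's moment comfortably beats both the normalization $(K_v/(10\b))^{2R_v}$ and the geometric $\exp(2P_j)$ blow-up of the crude bounds for $j\geq v$; the liberal choice $K_j = 250P_j$ is precisely what provides this margin.
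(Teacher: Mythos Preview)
Your Step 1 is fine and matches the main term of the paper's argument. The problem is Step 2. After passing, for $j\ge v$, to the crude bound $|\n_j(s_a;\a)|\le \exp(|\a|\,|\p_j(s_a)|)$ and writing $e^{c|\p_j|}\le e^{c\p_j}+e^{-c\p_j}$, you assert that expanding $e^{\pm c\p_j}$ ``produces genuine Dirichlet polynomials.'' It does not: $e^{\pm c\p_j(s)}$ is an \emph{infinite} Dirichlet series, and the mean value theorem you invoke requires a finite length cutoff. To truncate with a controllable tail you would need $|\p_j(s_a)|$ to be bounded, but for $j>v$ that is exactly what you do not know --- the good event only constrains indices $j<v$. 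So the argument is circular. (A smaller slip: the exponent coming out of the crude bound is $4-2\b$, not $2$, since $|\b-1|+1=2-\b$ here.)

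The paper sidesteps this by abandoning the first-bad-index decomposition for this proposition. Instead it proves a pointwise inequality valid for every $t$ and every $j$ separately (Lemma~\ref{lem:LBInterp}): $|M_{1,j}M_{2,j}|^2$ is bounded by $|\n_j(s_1;\b)\n_j(s_2;\b)|^2$ plus correction terms $Q_{a,j}(t)$ that are \emph{finite} polynomials in $|\p_j|$ of degree at most $2K_j(1+1/\b)$, hence honest short Dirichlet polynomials by construction. A splitting device (Lemma~\ref{lem:Splitting}) then factorises $\tfrac{1}{T}\int\prod_j(\cdots)\,dt$ into $\prod_j\bigl(\tfrac{1}{T}\int(\cdots)\,dt\bigr)$, and Lemma~\ref{lem:LBInterpErrorControl} shows each $Q_{a,j}$ contributes only $O(e^{-K_j})$ to its factor. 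The essential difference from your outline is that the bad event is handled \emph{locally} at each $j$ by an object that is already a short Dirichlet polynomial, rather than globally via an exponential that would have to be re-truncated without any size control on $\p_j$.
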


\subsubsection*{Proof of Proposition \ref{prop:lowerTwistedB<1}}

The task at hand is to evaluate the integral
\begin{align*}
\I(\b,h_1,h_2) =\int_\R \zeta(\tfrac{1}{2} + it + i h_1) \zeta(\tfrac{1}{2} + it &+ i h_2) \n(\tfrac{1}{2}+ i t + i h_1;\b-1) \n(\tfrac{1}{2}+ i t + i h_2;\b-1) \\
&\times \n(\tfrac{1}{2} - i t - i h_1;\b) \n(\tfrac{1}{2}- i t - i h_2;\b) w(t/T)~ dt.   
\end{align*}
We will accomplish this with an appropriate approximate functional equation.
Set
\[
\wt{g}_{\a_1,\a_2}(s,t) = \pi^{-s} \frac{\Gamma\left(\frac{\tfrac{1}{2} + \a_1 + s + i t}{2}\right)\Gamma\left(\frac{\tfrac{1}{2} + \a_2 + s + i t}{2}\right)}{\Gamma\left(\frac{\tfrac{1}{2} + \a_1 + i t}{2}\right)\Gamma\left(\frac{\tfrac{1}{2} + \a_2  + i t}{2}\right)},
\]
\[
\wt{X}_{\a_1,\a_2,t} =  \lam(\tfrac{1}{2} + \a_1  + i t) \lam(\tfrac{1}{2} + \a_2 + i t),
\]
and
\[
\wt{V}_{\a_1,\a_2}(x,t) = \frac{1}{2\pi i} \int_{(1)} \frac{e^{s^2}}{s} \wt{g}_{\a_1,\a_2}(s,t)x^{-s} ~ ds,
\]
which satisfies the decay estimate
\[
t^j \frac{\del^j}{\del t^j} \wt{V}_{\a_1,\a_2}(x,t)  \ll_{A,j} (1 + |x|/t)^{-A}.
\]

\begin{lemma}[\cite{IK} Theorem 5.3]\label{lem:AFE2nd}
Let $t\in [T,2T]$, $\Re \a_j \ll 1/ \log T$ and $|\Im \a_j| \leq (\log T)^\theta$ for $j = 1,2$ and $\theta > -1$. Then for all $A > 0$ 
\begin{align*}
    \zeta(\tfrac{1}{2} + \a_1 + i t) \zeta(\tfrac{1}{2} + \a_2 + i t) = \sum_{m,n} \frac{1}{m^{1/2 + \a_1} n^{1/2 + \a_2}} \left(m n \right)^{- i t} \wt{V}_{\a_1,\a_2}(mn,t)\\
    + \wt{X}_{\a_1,\a_2, t} \sum_{m,n} \frac{1}{m^{1/2 - \a_2} n^{1/2 - \a_1}} \left(mn\right)^{i t} \wt{V}_{-\a_2,-\a_1}(mn,t) + O_A((1 + t)^{-A}).
\end{align*}
\end{lemma}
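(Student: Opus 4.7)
The plan is to prove this approximate functional equation by the standard Mellin--Barnes argument (cf.~Theorem~5.3 of \cite{IK}), specialised to the two-zeta product with both shifts on the same side of the $t$-axis. I would begin with the auxiliary contour integral
\[
I(t) \;:=\; \frac{1}{2\pi i}\int_{(1)} \frac{e^{s^2}}{s}\,\wt{g}_{\a_1,\a_2}(s,t)\,\zeta(\tfrac12+\a_1+s+it)\,\zeta(\tfrac12+\a_2+s+it)\,ds.
\]
On the line $\Re s = 1$ both zetas are given by absolutely convergent Dirichlet series (since $\Re \a_j \ll 1/\log T$), so expanding them, interchanging sum and integral, and recognising the definition of $\wt{V}_{\a_1,\a_2}$ gives $I(t)$ as the first sum on the right-hand side of the lemma.

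Next, I would shift the contour from $\Re s = 1$ down to $\Re s = -1$. The simple pole at $s = 0$ from the factor $1/s$ has residue $\zeta(\tfrac12+\a_1+it)\,\zeta(\tfrac12+\a_2+it)$, which is the left-hand side of the AFE. Each zeta also contributes a simple pole at $s = \tfrac12 - \a_j - it$ lying in the strip $-1 \le \Re s \le 1$. However, since $t \asymp T$ while $|\Im \a_j| \le (\log T)^\t$ is much smaller, these poles sit at imaginary distance $\asymp T$ from the origin, so the Gaussian weight $e^{s^2}$ renders their residues $O_A(T^{-A})$ for every $A$; they are absorbed into the error term $O_A((1+t)^{-A})$.

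On the shifted contour $\Re s = -1$, I would change variables $s \mapsto -s$ (returning to $\Re s = 1$ and picking up an overall sign) and invoke the functional equation $\zeta(w) = \lam(w)\zeta(1-w)$ for each of the two zetas, turning each $\zeta(\tfrac12+\a_j-s+it)$ into $\lam(\tfrac12+\a_j-s+it)\,\zeta(\tfrac12-\a_j+s-it)$. The crucial computation is then the gamma-function identity
\[
\wt{g}_{\a_1,\a_2}(-s,t)\,\prod_{j=1,2}\frac{\lam(\tfrac12+\a_j-s+it)}{\lam(\tfrac12+\a_j+it)} \;=\; \wt{g}_{-\a_2,-\a_1}(s,t),
\]
which follows by substituting $\lam(w) = \pi^{w-1/2}\Gamma((1-w)/2)/\Gamma(w/2)$ and telescoping the gammas. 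Pulling out $\wt{X}_{\a_1,\a_2,t} = \lam(\tfrac12+\a_1+it)\lam(\tfrac12+\a_2+it)$ and expanding the dual zetas as absolutely convergent Dirichlet series on $\Re s = 1$ (and harmlessly relabelling $m\leftrightarrow n$) then yields exactly the second sum in the statement.

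The main technical obstacle is the telescoping gamma identity together with careful tracking of the sign introduced by $s\mapsto -s$; it is elementary but requires bookkeeping of the powers of $\pi$ and of the swap $\a_1\leftrightarrow\a_2$ induced by $m\leftrightarrow n$. A secondary concern is uniformity in the imaginary shifts, but Stirling's approximation shows $\wt{g}_{\a_1,\a_2}(s,t)$ and the $\lam$ ratios grow at most polynomially in $|\Im s|$ and $t$ on the vertical lines in play, while the Gaussian factor $e^{s^2}$ provides more than enough damping to control both the residues at $s = \tfrac12-\a_j-it$ and the tails, uniformly for $|\Im \a_j| \le (\log T)^\t$ once $T$ is taken sufficiently large in terms of $\t$.
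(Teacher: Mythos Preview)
Your sketch is correct and is precisely the argument behind Theorem~5.3 of \cite{IK}, which the paper merely cites without giving its own proof; there is nothing to compare. One bookkeeping point worth double-checking when you write it up: the telescoped identity you state lands on $\wt{g}_{-\a_2,-\a_1}(s,-t)$ rather than $\wt{g}_{-\a_2,-\a_1}(s,t)$ (the dual zetas carry $-it$), so make sure the sign of $t$ in the resulting $\wt{V}$ matches the convention used in the lemma's statement.
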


Therefore, up to a negligible error, that $I(\b,h_1,h_2)$ is given by 
\begin{align} \label{eqn:lowerBound2Term}
&\prod_{j\leq \ell} \sum_{h,k,m,n} \frac{a_{h_1,h_2,j}(h)\overline{b_{h_1,h_2,j}(k)}}{m^{h_1}n^{h_2}\sqrt{hkmn}}  \int_\R  \left(\frac{hmn}{k}\right)^{-it} \wt{V}_{h_1,h_2}(mn,t)  w(t/T)~dt \\
+  
\prod_{j\leq \ell}&\sum_{h,k,m,n} \frac{a_{h_1,h_2,j}(h)\overline{b_{h_1,h_2,j}(k)}}{m^{-h_2}n^{-h_1}\sqrt{hkmn}} \int_\R \wt{X}_{h_1,h_2,t} \left(\frac{h}{kmn}\right)^{-it} \wt{V}_{-h_2,-h_1}(mn,t) w(t/T)~dt. \nonumber
\end{align}
Using the decay of $\widetilde{V}_{\a_1,\a_2}$, we can discard the contribution of terms with $hmn \neq k$ and $h\neq kmn$ in the first and second lines of (\ref{eqn:lowerBound2Term}) respectively. 
Next we can parameterize the diagonal sums in the first and second lines by $\sum_{h|k, mn = k/h}$ and $\sum_{k|h, mn = h/k}$ respectively.
Then after using the definition of $\wt{V}_{\a,\b}$, shifting the contour, and recognizing the sum over $m$ and $n$ as a familiar Dirichlet convolution we find
\begin{align}\label{eqn:lowerBound2TermDiag}
\I(\b, h_1,h_2) &= \prod_{j\leq \ell} \sum_{h|k} \frac{a_{h_1,h_2,j}(h)\overline{b_{h_1,h_2,j}(k)}}{k} \sigma_{h_1,h_2}(k/h)  \int_\R  w(t/T)~dt \\
+  
\prod_{j\leq \ell}&\sum_{k|h} \frac{a_{h_1,h_2,j}(h)\overline{b_{h_1,h_2,j}(k)}}{h} \sigma_{-h_2,-h_1}(h/k) \int_\R \wt{X}_{h_1,h_2,t} ~ w(t/T)~dt + O(T^{1-\d}). \nonumber
\end{align}
for some $\d > 0$.

Due to the oscillating term $\wt{X}_{h_1,h_2,t}$ the second line in (\ref{eqn:lowerBound2TermDiag})  will not contribute to the leading order, and we will control this term first. 
Because $w(t/T)$ is supported on $[T,2T]$ and that $|h_1|,|h_2| \leq (\log T)^\t$, Stirling's approximation gives $\wt{X}_{h_1,h_2,t} = e^{-i f(t)}(1+ O(1/T))$ where
\[
f(t) = (h_1 + t) \log\left(\frac{t + h_1}{2\pi}\right) + (h_2 + t) \log\left(\frac{t + h_2}{2\pi}\right) -2t - h_1 - h_2 - \frac{\pi}{2}. 
\]
For $t\in [T,2T]$ note that $f'(t) \gg \log t$ and that $f''(t) \ll 1/t$.
Therefore, an integration by parts à la van der Corput gives the bound
\[
\int_\R \wt{X}_{h_1,h_2,t} ~ w(t/T)~dt \ll 1/\log T.
\]
Therefore because $\prod_{1\leq j\leq \ell} A_{h_1,h_2,j}(s)\overline{B_{h_1,h_2,j}(s)}$ is a Dirichlet polynomial of length $\leq T^{\b/9}$, we may absorb the second line of (\ref{eqn:lowerBound2TermDiag}) into the error term $O(T^{1-\d})$.

It remains to analyze the sums 
\[
S_j := \sum_{h|k} \frac{a_{h_1,h_2,j}(h)\overline{b_{h_1,h_2,j}(k)}}{k} \sigma_{h_1,h_2}(k/h) = \sum_{a,h} \frac{\sigma_{h_1,h_2}(a)  a_{h_1,h_2,j}(h) \overline{b_{h_1,h_2,j}(ah)}}{ah} 
\]
for $j \leq \ell$.
Here we are summing over all integers $a,h$ composed of primes in $(T_{j-1},T_j]$ such that $\Omega(a) + \Omega(h) \leq K_j$.
If we discard the latter condition on $a$ and $h$ via Rankin's trick, we incur an error at most 
\begin{align*}
e^{-K_j} &\prod_{p \in (T_{j-1},T_j]} \sum_{r,s \geq 0} \frac{|\sigma_{h_1,h_2}(p^r)  a_{h_1,h_2,j}(p^s)  \overline{b_{h_1,h_2,j}(p^{r+s})}|}{p^{r+s} } e^{\Omega(r) + \Omega(s)}\\
&\ll e^{-K_j} \prod_{p \in (T_{j-1},T_j]} \left(1 + \frac{2e  }{p} + O\left(\frac{1}{p^2}\right)\right) \ll e^{-100P_j},
\end{align*}
whence
\[
S_j = \prod_{p \in (T_{j-1},T_j]} \sum_{r,s \geq 0} \frac{\sigma_{h_1,h_2}(p^r)  a_{h_1,h_2,j}(p^s)  \overline{b_{h_1,h_2,j}(p^{r+s})}}{p^{r+s} } + O(e^{-100P_j}).
\]
Now since $ a_{h_1,h_2,j}(p) = (\b - 1)\sigma_{h_1,h_2}(p)$ and $b_{h_1,h_2,j}(p) = \b \sigma_{h_1,h_2}(p)$ it follows that 
\[
S_j = \prod_{p \in (T_{j-1},T_j]}\left(1 + \frac{2\b^2(1 + \cos((h_1-h_2) \log p))}{p} + O\left(\frac{1}{p^2}\right) \right) + O(e^{-100P_j}),
\]
so up to a power savings $\I(\b,h_1,h_2)$ is equal to 
\[
T\prod_{j\leq \ell} \left( \prod_{p \in (T_{j-1},T_j]}\left(1 + \frac{2\b^2(1 + \cos((h_1-h_2) \log p))}{p} + O\left(\frac{1}{p^2}\right) \right) + O(e^{-100P_j})\right) \|w\|_1.
\]
Using Lemma \ref{lem:cosPrimeSum} and that $\|w\|_1 > 0.8$ we conclude
\[
|\I(\b,h_1,h_2)| \gg_\b T (\log T)^{2\b^2} |\zeta(1 + 1/\log T_\ell + i (h_1-h_2))|,
\]
and Proposition \ref{prop:lowerTwistedB<1} follows. \qed

\subsubsection*{Proof of Proposition \ref{prop:lowerDPMVB<1}}

Since we only need an upper bound, we will bound the weight $w(t/T)$ by the characteristic function of $[T,2T]$.
Therefore, we must simply evaluate the mean value of the Dirichlet polynomial $M_1 M_2(t)$ on $[T,2T]$.
We will decompose each
\[
M_a(t) = \prod_{j\leq \ell} M_{a,j} (t)
\]
where
\[
M_{a,j} (t) := |\n_j(\tfrac{1}{2}+ i t + i h_a;\b-1)| |\n_j(\tfrac{1}{2}+ i t + i h_a;\b)|^{1/\b}.
\]
Following \cite{HS} we will now bound $|M_{1,j}M_{2,j}|^2$ by products of genuine Dirichlet polynomials.

\begin{lemma}\label{lem:LBInterp}
For $j\leq \ell$
\begin{align*}
|M_{1,j}(t) M_{2,j}(t)|^2 \leq  
|\n_j(\tfrac{1}{2} + i t &+ i h_1; \b) \n_j(\tfrac{1}{2}+ i t + i h_2; \b)|^2 (1+O(e^{-K_j/10})) \\
 + O\big(2^{2/\b} Q_{1,j}(t) |\n_j(\tfrac{1}{2}&+ i t + i h_2; \b)|^2(1+O(e^{-K_j/10})) \\
+ 2^{2/\b}Q_{2,j}(t) |\n_j(\tfrac{1}{2}+ &i t + i h_2; \b)|^2(1+O(e^{-K_j/10}))\\
 + 2^{4/\b} &Q_{1,j}(t) Q_{2,j}(t) \big),
\end{align*} 
where for $a = 1,2$
\[
Q_{a,j}(t) := \left(\frac{12|\p_j(\tfrac{1}{2} + i t + i h_a)|}{K_j}\right)^{2K_j} \sum_{r = 0}^{K_j/\b} \left(\frac{2e|\p_j(\tfrac{1}{2} + i t + i h_a)|}{r+1}\right)^{2r}
\]
\end{lemma}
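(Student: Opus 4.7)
The plan is to first establish the single-index bound
\[
M_{a,j}(t)^2 \leq \bigl|\n_j(\tfrac{1}{2}+it+ih_a;\b)\bigr|^2\bigl(1+O(e^{-K_j/10})\bigr) + O\bigl(2^{2/\b}Q_{a,j}(t)\bigr)
\]
for each $a\in\{1,2\}$, and then to deduce the lemma by simply multiplying the two bounds. Writing this single-index bound as $M_{a,j}^2 \leq A_a + B_a$, the expansion $M_{1,j}^2 M_{2,j}^2 \leq (A_1+B_1)(A_2+B_2) = A_1A_2 + A_1B_2 + A_2B_1 + B_1B_2$ produces exactly the four summands appearing on the right-hand side of the lemma (modulo an evident transcription $h_2\mapsto h_1$ in the third line of the statement).

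For the single-index bound, set $x := \p_j(\tfrac{1}{2}+it+ih_a)$ and let $E_K(z) := \sum_{m=0}^K z^m/m!$ denote the truncated exponential, so that $\n_j(\tfrac{1}{2}+it+ih_a; c) = E_{K_j}(cx)$ for each real $c$ and $M_{a,j}(t)^2 = |E_{K_j}((\b-1)x)|^2 \, |E_{K_j}(\b x)|^{2/\b}$. The argument splits on whether $|x|$ lies below or above the threshold $K_j/12$. In the small regime $|x|\leq K_j/12$, Stirling's inequality $m!\geq (m/e)^m$ furnishes the tail estimate $|E_{K_j}(z)-e^z|\leq 2(e/12)^{K_j+1}$ on the disc $|z|\leq K_j/12$; combined with $|e^z|\geq e^{-K_j/12}$ this gives $E_{K_j}(z) = e^z(1+O(e^{-K_j/2}))$ uniformly there. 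Applying this with $z=(\b-1)x$ and $z=\b x$ (both of modulus at most $K_j/12$ since $|\b-1|,\b\leq 1$), raising to the $2/\b$ power and multiplying yields
\[
M_{a,j}(t)^2 = e^{2\b \Re x}\bigl(1+O(e^{-K_j/10})\bigr) = \bigl|\n_j(\tfrac{1}{2}+it+ih_a;\b)\bigr|^2\bigl(1+O(e^{-K_j/10})\bigr),
\]
so the single-index bound holds with the $B_a$ term unused.

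In the large regime $|x|>K_j/12$ the penalty factor $(12|x|/K_j)^{2K_j}$ now exceeds one, and we show instead that $Q_{a,j}(t)$ alone dominates $M_{a,j}(t)^2$. Using $|c|\leq 1$ for $c\in\{\b-1,\b\}$ gives the crude bound $M_{a,j}(t)^2 \leq \bigl(\sum_{m=0}^{K_j}|x|^m/m!\bigr)^{2+2/\b}$. The multinomial identity $\sum_{m_1+m_2=m}\tfrac{1}{m_1!m_2!}=\tfrac{2^m}{m!}$ gives $\bigl(\sum_m|x|^m/m!\bigr)^2\leq E_{2K_j}(2|x|)$, and an analogous $\lceil 2/\b\rceil$-fold convolution handles the $2/\b$ power at the cost of a factor $2^{2/\b}$. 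The Stirling-type bound $|x|^r/r!\ll (2e|x|/(r+1))^r$ then identifies the moderate-$m$ contribution with the $r$-sum in $Q_{a,j}$, while the tail contribution from $m$ close to $K_j$ is absorbed by the penalty $(12|x|/K_j)^{2K_j}$.

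The main obstacle is the careful elementary bookkeeping in the large regime: matching the tail of $\bigl(\sum_{m\leq K_j}|x|^m/m!\bigr)^{2+2/\b}$ against each of the two factors of $Q_{a,j}(t)$ and tracking the constants $12$, $2e$ and $2^{2/\b}$. This is essentially the two-shift analog of the inequalities underlying Propositions 2 and 3 of \cite{HRS} and the author's earlier work \cite{Curran}; once the single-index bound is in hand, the passage to the final product statement is cosmetic.
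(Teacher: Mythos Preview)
Your strategy—establish a single-index bound $M_{a,j}^2 \leq A_a + B_a$ by case-splitting on the size of $|\p_j(\tfrac12+it+ih_a)|$ and then multiply the two bounds—is exactly the paper's approach (phrased there as a direct four-way split and deferred to lemma~1 of \cite{HS}), and your small-regime argument agrees with the paper's up to the cosmetic choice of threshold $K_j/12$ versus $K_j/10$. The one point to correct is your large-regime sketch. You collapse both factors of $M_{a,j}^2$ into $S^{2+2/\b}$ with $S=\sum_{m\leq K_j}|x|^m/m!$, and then propose to match ``moderate-$m$'' and ``tail'' pieces of this to the $r$-sum and to the penalty factor of $Q_{a,j}$ respectively. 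But $Q_{a,j}$ is a \emph{product} of the penalty $(12|x|/K_j)^{2K_j}$ and the $r$-sum, not a sum of two ranges, so an additive decomposition of $S^{2+2/\b}$ does not reproduce it. The paper (following \cite{HS}) never merges the two factors in the first place: one bounds $|\n_j(\cdot;\b-1)|$ by the penalty $(12|x|/K_j)^{2K_j}$ directly, and $|\n_j(\cdot;\b)|^{1/\b}$ by $2^{2/\b}$ times the $r$-sum, so that their product is $2^{2/\b}Q_{a,j}$ by construction. This is not a genuine gap—the inequalities you need are indeed those of \cite{HS,HRS}—but the bookkeeping you describe is misaligned with the multiplicative structure of $Q_{a,j}$.
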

\begin{proof}
The proof is nearly identical to the proof of lemma 1 of \cite{HS}.
If $|\p_j(\tfrac{1}{2} + i t + i h_a)|\leq K_j/10$, then we may write 
\begin{align*}
|M_{a,j}(t)|^2 &= \exp\left(2\b \Re \p_j(\tfrac{1}{2} + i t + i h_a)+\right) (1+O(e^{-K_j/10}))^2  \\
&= |\n_j(\tfrac{1}{2} + i t + i h_a; \b)|^2 (1+O(e^{-K_j/10})).
\end{align*} 
In the case $|\p_j(\tfrac{1}{2} + i t + i h_a)| > K_j/10$ for some $a$ then the proof of lemma 1 of \cite{HS} provides  the bounds 
\[
|\n_j(\tfrac{1}{2}+ i t + i h_a;\b-1)| \leq \left(\frac{12|\p_j(\tfrac{1}{2} + i t + i h_a)|}{K_j}\right)^{2K_j}
\]
and
\[
|\n_j(\tfrac{1}{2}+ i t + i h_a;\b)|^{1/\b} \ll 2^{2/\b} \sum_{r = 0}^{K_j/\b} \left(\frac{2e|\p_j(\tfrac{1}{2} + i t + i h_a)|}{r+1}\right)^{2r}.
\]
The claim now follows  by summing over all four cases where $|\p_j(\tfrac{1}{2} + i t + i h_1)|$ and $|\p_j(\tfrac{1}{2} + i t + i h_2)|$ are either smaller or larger than $K_j/10$.
\end{proof}

\begin{lemma}\label{lem:LBInterpErrorControl} For $a = 1,2$
\[
\frac{1}{T} \int_T^{2T} Q_{a,j}(t) dt \ll e^{-K_j},  \quad  \frac{1}{T} \int_T^{2T} Q_{a,j}(t)^2 dt \ll e^{-K_j}.
\]
\end{lemma}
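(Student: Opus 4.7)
Both bounds reduce to a high-moment estimate for the Dirichlet polynomial $\p_j(\tfrac{1}{2}+it+ih_a)$. Expanding $Q_{a,j}$ linearly gives
\[
\frac{1}{T}\int_T^{2T}Q_{a,j}(t)\,dt \;=\; \Bigl(\tfrac{12}{K_j}\Bigr)^{2K_j}\sum_{r=0}^{K_j/\b}\Bigl(\tfrac{2e}{r+1}\Bigr)^{2r}M_{K_j+r},
\]
where $M_m := \tfrac{1}{T}\int_T^{2T}|\p_j(\tfrac{1}{2}+it+ih_a)|^{2m}\,dt$. Since $\p_j$ is the real part of a short prime-supported Dirichlet polynomial, expanding $\p_j^{2m}$ via the multinomial theorem and applying the mean value theorem for Dirichlet polynomials yields the Gaussian-type bound $M_m \ll \tfrac{(2m)!}{2^{2m}m!}P_j^m$, uniformly in $m$ provided $T_j^{2m}=o(T)$. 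In our range $m \leq 2K_j/\b$ this is amply satisfied, since $K_j=250P_j$ and $\log T_j$ is much smaller than $\log T$; Stirling then converts the bound to $M_m \ll (mP_j/e)^m$. This is essentially Proposition~2 of \cite{HRS}.

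Substituting $M_{K_j+r}$ and using $P_j = K_j/250$, each summand factors, up to absolute constants, as $\exp(K_j\,\phi(r/K_j))$, where
\[
\phi(x) \;=\; \log\!\Bigl(\tfrac{144(1+x)}{250\,e}\Bigr) + x\log\!\Bigl(\tfrac{2e(1+x)}{125\,x^2}\Bigr).
\]
The heart of the argument is a uniform estimate $\phi(x)\leq -c$ for some absolute $c>1$ and all $x\geq 0$. At $x=0$ one has $\phi(0)=\log(144/(250e))\approx -1.55$; as $x\to\infty$ the term $-2x\log x$ drags $\phi$ to $-\infty$; and a direct calculus check locates a unique interior maximum near $x\approx 0.15$ with value $\approx -1.3$. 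The underlying inequality $144/(250e)<1/e$, equivalently $144<250$---a design feature of the numerical constants chosen in $Q_{a,j}$ and $K_j=250P_j$---is precisely what makes the estimate succeed. Summing over the $O(K_j)$ values of $r$ loses only a polynomial factor in $K_j$, which is absorbed into the exponential decay, yielding the first bound.

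The estimate for $Q_{a,j}^2$ is handled by expanding the square and running the same argument: one obtains a double sum in $(r,s)$ with $M_{2K_j+r+s}$ in place of $M_{K_j+r}$ and a reinforced penalty $(12/K_j)^{4K_j}$, and the analogous two-variable function $\widetilde\phi(x,y)$ is again bounded above by an absolute constant strictly less than $-1$, thanks to the strengthened prefactor ($288/(250e)$ in place of $144/(250e)$ at the origin) together with the same tail behavior. The main technical obstacle in both cases is keeping the Stirling constant sharp: one must use $M_m \ll (mP_j/e)^m$ rather than the cruder $M_m \ll (mP_j)^m$, since the factor $e^{-m}$ is exactly what ensures $\phi(0)<-1$ and produces the genuine $e^{-K_j}$ decay claimed.
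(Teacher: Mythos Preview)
Your approach is essentially the paper's: both bounds are reduced to the moment estimate $M_m\ll m!\,P_j^m\sim (mP_j/e)^m$ for $|\p_j|^{2m}$, after which one optimizes the summand. For the first inequality the paper simply cites Lemma~2 of \cite{HS}, which is exactly your computation with $\phi$; for the second the paper applies Cauchy--Schwarz to $Q_{a,j}^2$ before integrating, obtaining a single sum with moments $M_{2K_j+2r}$ in place of your double sum with $M_{2K_j+r+s}$. Since your $\widetilde\phi$ is maximized on the diagonal $r=s$, the two routes produce the same exponential rate.

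One point deserves care. Your claim that $\widetilde\phi$ stays strictly below $-1$ is not established by checking the origin and the tail: although $\widetilde\phi(0,0)=2\log(288/(250e))\approx -1.72$, the interior maximum (near $x=y\approx 0.2$) comes out to roughly $-0.97$, just above $-1$. The ``strengthened prefactor'' $(12/K_j)^{4K_j}$ is more than offset by the larger moment $M_{2K_j+r+s}$, so the squared case is tighter, not looser, than the first. The same marginal shortfall is implicit in the paper's Cauchy--Schwarz version; in the application (the proof of Proposition~\ref{prop:lowerDPMVB<1}) only a bound of the form $e^{-cK_j}$ for some $c>0$ is required, so this does not affect the argument, but you should not assert $\widetilde\phi<-1$ without an actual verification.
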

\begin{proof}
To handle the first estimate, note for $0\leq r \leq K_j/\b$ 
\[
\p_j(\tfrac{1}{2} + i t + i h_a)^{K_j + r} = \sum_{\substack{\Omega(n) = K_j + r \\  p|n \Rightarrow p\in (T_{j-1},T_j]}} \frac{(K_j + r)! g(n)}{n^{1/2 + i t + i h_a}}.
\]
This is a Dirichlet polynomial of length at most $T_j^{K_j(1+1/\b)}$  whose coefficients have magnitude $(K_j + r)! g(n)/n^{1/2}$, so the claim follows by the same argument used in lemma 2 of \cite{HS}. 
The only difference is that here $K_j = 250 P_j$.

To handle the second estimate an application of the Cauchy Schwarz inequality yields
\[
Q_{a,j}(t) ^2 \leq \left(\frac{12 |P_j(\tfrac{1}{2} + i t + i h_a)|}{K_j} \right)^{4K_j} \cdot \frac{K_j}{\b} \sum_{r = 0}^{K_j/\b} \left(\frac{2 e |P_j(\tfrac{1}{2} + i t + i h_a)|}{r + 1}\right)^{4r}.
\]
Next note that
\[
\p_j(\tfrac{1}{2} + i t + i h_a)^{2K_j + 2r} = \sum_{\substack{\Omega(n) = 2K_j + 2r \\  p|n \Rightarrow p\in (T_{j-1},T_j]}} \frac{(2K_j + 2r)! g(n)}{n^{1/2 + i t + i h_a}},
\]
which is also a short Dirichlet polynomial. Now the mean value theorem for Dirichlet polynomials gives
\[
\frac{1}{T}\int_T^{2T} |\p_j(\tfrac{1}{2} + i t + i h_a)|^{4K_j + 4r} dt \ll \sum_{\substack{\Omega(n) = 2K_j + 2r \\  p|n \Rightarrow p\in (T_{j-1},T_j]}} \frac{(2K_j + 2r)!^2 g(n)^2}{n} \ll (2K_j + 2r)! P_j^{2K_j + 2r},
\]
where we have used that $g(n)\leq 1$ and the definition of $P_j$.
Whence
\[
\frac{1}{T} \int_T^{2T} Q_{a,j}(t)^2 dt \ll \frac{K_j}{\b} \left(\frac{12}{K_j}\right)^{4K_j} \sum_{r = 0}^{K_j/\b} \left(\frac{2e}{r+1}\right)^{4r} (2K_j + 2r)! P_j^{2K_j + 2r}.
\]
The summand is maximized near $r$ satisfying $r^2 = 4 P_j(2K_j + 2 r).$
Since $K_j = 250P_j$ any  such $r$ necessarily lies in $[2\sqrt{P_j K_j},2.1 \sqrt{P_j K_j}]$, and we conclude in the same manner as lemma 2 of \cite{HS}.

\end{proof}

To conclude, we will use the following splitting lemma, which appears in equation (16) of \cite{HS}
\begin{lemma}\label{lem:Splitting}
Suppose for $1\leq j \leq \ell$ we have $j$ disjoint intervals $I_j$ and Dirichlet polynomials $A_j(s)= \sum_{n} a_j(n)n^{-s}$ such that $a_j(n)$ vanishes unless $n$ is composed of primes in $I_j$.
Then if $\prod_{j\leq \ell} A_j(s)$ is a Dirichlet polynomial of length $\leq N$, then
\begin{align*}
\frac{1}{T}\int_T^{2T} \prod_{j\leq \ell} |A_j(\tfrac{1}{2} + i t)|^2 dt= (1 + O(N/T))\prod_{j\leq \ell}\left(\frac{1}{T}\int_T^{2T}|A_j(\tfrac{1}{2} + i t)|^2 dt\right)
\end{align*}
\end{lemma}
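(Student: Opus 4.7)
The plan is to express both sides as the same product $\prod_{j\le\ell}\sum_{n_j}|a_j(n_j)|^2/n_j$ via Montgomery--Vaughan, and then observe that the accumulated error is $O(N/T)$.

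First I would set up the multiplicative bookkeeping. Write $\prod_{j\le\ell}A_j(s) = \sum_n a(n)n^{-s}$. Because the intervals $I_j$ are pairwise disjoint, every integer $n$ in the support factors uniquely as $n=\prod_j n_j$ with $n_j$ composed of primes from $I_j$, and correspondingly $a(n)=\prod_j a_j(n_j)$. Multiplicativity then gives the identity
\[
\sum_n \frac{|a(n)|^2}{n} \;=\; \prod_{j\le\ell}\sum_{n_j}\frac{|a_j(n_j)|^2}{n_j}.
\]

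Next, I would invoke the Montgomery--Vaughan mean value theorem
$\int_0^T|\sum_n c_n n^{-it}|^2 dt = \sum_n|c_n|^2(T+O(n))$ twice. Applied to the full product, a Dirichlet polynomial of length $\le N$, it yields
\[
\tfrac{1}{T}\int_T^{2T}\Big|\prod_{j\le\ell}A_j(\tfrac{1}{2}+it)\Big|^2 dt \;=\; (1+O(N/T))\sum_n \frac{|a(n)|^2}{n},
\]
while applied to each individual $A_j$, of length $N_j\le N$, it gives
\[
\tfrac{1}{T}\int_T^{2T}|A_j(\tfrac{1}{2}+it)|^2 dt \;=\; (1+O(N_j/T))\sum_{n_j}\frac{|a_j(n_j)|^2}{n_j}.
\]
Taking the product over $j$ and comparing with the previous displays, the lemma reduces to the estimate $\prod_{j\le\ell}(1+O(N_j/T)) = 1+O(N/T)$.

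The remaining step is pure bookkeeping. Disjointness of the prime supports prevents any cancellation among the coefficients of $\prod_j A_j$, so its length equals $\prod_j N_j$, giving $\prod_j N_j\le N$. Trivial factors ($N_j=1$) contribute no MV error and may be discarded, so we may assume $N_j\ge 2$, forcing $\ell\le\log_2 N$. A short induction then shows $\sum_j N_j \le N+\ell-1 = O(N)$, and expanding $\prod_j(1+cN_j/T) = 1+O(\sum_j N_j/T) + O((\sum_j N_j/T)^2)$ yields the desired $1+O(N/T)$ in the regime $N\le T$ (the statement being trivial otherwise). I anticipate no serious obstacle: the only substantive ingredient is the classical Montgomery--Vaughan estimate, and everything else is straightforward manipulation of Dirichlet polynomials with disjoint prime supports.
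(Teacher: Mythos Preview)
Your argument is correct and is exactly the approach the paper has in mind: the paper does not give a self-contained proof but refers to equation (16) of \cite{HS} together with the sharper Montgomery--Vaughan mean value theorem (Theorem 9.1 of \cite{IK}), which is precisely the combination you spell out. Your bookkeeping step $\prod_j(1+O(N_j/T))=1+O(N/T)$ via $\sum_j N_j \le N+\ell-1$ and $\ell\le\log_2 N$ is a clean way to handle the accumulated error that the references leave implicit.
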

\noindent
In \cite{HS} equation (16), there is an additional factor of $\log N$ in the error term which arises because \cite{HS}  uses a simpler treatment of the mean value theorem for Dirichlet polynomials. 
This however can be easily removed by using a version of the mean value theorem for Dirichlet polynomials with a slightly stronger error term, i.e. theorem 9.1 of \cite{IK}.
In fact a version of this splitting lemma has also appeared in \cite[lemma 14]{FHKUpper}.

To conclude the proof, note the mean value theorem for Dirichlet polynomials gives
\begin{align}
\frac{1}{T}\int_{T}^{2T}& |\n_j(\tfrac{1}{2} + i t + i h_1; \b) \n_j(\tfrac{1}{2}+ i t + i h_2; \b)|^2 dt \nonumber \\
&=(1 + O(T^{-8/9})) \sum_{\substack{p|n \Rightarrow p\in (T_{j-1},T_j] \\ \Omega(n) \leq K_j}} \frac{|b_{h_1,h_2,j}(n)|^2}{n} \nonumber \\
&\leq (1 + O(T^{-8/9}))  \prod_{p\in (T_{j-1},T_j]}\left(1 + \frac{2\b^2(1+\cos((h_1-h_2)\log p))}{p} + O\left(\frac{1}{p^2}\right)\right).
\end{align}
Similar reasoning gives that for $a = 1, 2$
\begin{align}
\frac{1}{T}\int_{T}^{2T} |\n_j(\tfrac{1}{2} + i t + i h_a; \b) |^2 dt &\leq  (1 + O(T^{-8/9}))  \prod_{p\in (T_{j-1},T_j]}\left(1 + \frac{\b^2}{p} + O\left(\frac{1}{p^2}\right)\right) \nonumber \\
\leq &(1 + O(T^{-8/9})) \left(\frac{\log T_j}{\log T_{j-1}}\right)^{\b^2}.
\end{align}
Combining these calculations with lemma \ref{lem:LBInterp}, \ref{lem:LBInterpErrorControl}, and \ref{lem:Splitting} and bounding integrals of products using the Cauchy Schwarz inequality, we find
\begin{align*}
\frac{1}{T} \int_\R M_1 M_2(t)^2 w(t/T) dt & \\
\ll_\b
\prod_{1\leq j \leq \ell} \Bigg( \prod_{T_{j-1} < p \leq T_j}  \Bigg(1 + \frac{2\b^2(1+\cos((h_1-h_2)\log p))}{p} &+ O\left(\frac{1}{p^2}\right)\Bigg)  + O(e^{-K_j / 10}) \Bigg) \\
\ll_\b  (\log T)^{2\b^2} |\zeta(1+1/\log T_\ell+i(h_1&-h_2))|^{2\b^2}.
\end{align*}
This completes the proof of Proposition \ref{prop:lowerDPMVB<1}.

\subsection{Proof of Theorem \ref{thm:shiftedMoments} for $\b \geq 1$}

The case of $\b \geq 1$  a bit simpler than the case $\b \leq 1$.
Now let $T_0 = \b^4 e^2$, let $\ell$ be the largest integer such that $\log_\ell T \geq 10^4,$ for $1\leq j \leq \ell$ set
\[
T_j = \exp\left(\frac{\log T}{\b^2(\log_j T)^2}\right),
\]
and take $K_j = 250 \b^2 P_j$.
With this choice of parameters, $\n(s;\a)$ is a Dirichlet polynomial of length at most $ T^{1/18}$.
Now because $\b \geq 1$, Hölder's inequality gives

\begin{align} \label{eqn:InterpB>1} 
\Bigg|\int_\R \zeta(\tfrac{1}{2} + it + i h_1) \zeta(\tfrac{1}{2} + it &+ i h_2) \n(\tfrac{1}{2}+ i t + i h_1;\b-1) \n(\tfrac{1}{2}+ i t + i h_2;\b-1) \\
&\quad\times \n(\tfrac{1}{2} - i t - i h_1;\b) \n(\tfrac{1}{2}- i t - i h_2;\b) w(t/T)~ dt  \Bigg| \nonumber \\
\leq \bigg(\int_\R &|\zeta(\tfrac{1}{2} + it + i h_1)  \zeta(\tfrac{1}{2} + it + i h_2)|^{2\b} w(t/T) ~dt\bigg)^{\tfrac{1}{2\b}} \nonumber  \\
\times\bigg(\int_\R |\n(\tfrac{1}{2}+ i t + i h_1;&\b-1)\n(\tfrac{1}{2} + i t + i h_2;\b-1)  \nonumber\\
\times \n(\tfrac{1}{2}&+ i t + i h_1;\b)\n(\tfrac{1}{2}+ i t + i h_2;\b)|^{\frac{2\b}{2\b - 1}} w(t/T) ~ dt \bigg)^{1-\tfrac{1}{2\b}}.\nonumber
\end{align}

Therefore, if we denote
\[
N_{a,j}(t) := |\n_j(\tfrac{1}{2}+ i t + i h_1;\b-1) \n_j(\tfrac{1}{2}+ i t + i h_1;\b)|^{\tfrac{2\b}{2\b - 1}}
\]
all we must show is 

\begin{prop}\label{prop:DPMVB>1}
For $\b \geq 1$ and $|h_1|, |h_2| \leq (\log T)^\t$ 
\begin{align*}
\frac{1}{T} \int_\R  \prod_{j\leq \ell}  &N_{1,j}(t)  N_{2,j}(t)  w(t/T) ~ dt \\
&\ll_\b  (\log T)^{2\b^2} |\zeta(1+1/\log T +i(h_1-h_2))|^{2\b^2}.
\end{align*}
\end{prop}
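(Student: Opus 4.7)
The plan is to mirror the argument used for Proposition~\ref{prop:lowerDPMVB<1}: bound each $N_{a,j}(t)$ pointwise by a principal Dirichlet polynomial plus a ``bad-set'' error, apply the splitting Lemma~\ref{lem:Splitting} to factor the mean over $j\leq\ell$, evaluate each resulting factor via the mean value theorem for Dirichlet polynomials, and combine using Lemma~\ref{lem:cosPrimeSum}.

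The first step is an analog of Lemma~\ref{lem:LBInterp}. For each $a\in\{1,2\}$ and $j\leq\ell$, split according to whether $|\p_j(\tfrac12+it+ih_a)|\leq K_j/(C\b)$ for a suitably small constant $C$. With the choice $K_j = 250\b^2 P_j$, the Taylor truncations satisfy $\n_j(s;\gamma) = \exp(\gamma\p_j(s))(1+O(e^{-cK_j}))$ for both $\gamma=\b-1$ and $\gamma=\b$ in the good regime, so
\[
N_{a,j}(t) = \exp\!\bigl(2\b\,\Re\p_j(\tfrac12+it+ih_a)\bigr)(1 + O_\b(e^{-cK_j})) = |\n_j(\tfrac12+it+ih_a;\b)|^2(1 + O_\b(e^{-cK_j})).
\]
In the bad regime, crude upper bounds on $|\n_j(\cdot;\b-1)|$ and $|\n_j(\cdot;\b)|$ in terms of $|\p_j|$, analogous to those in Lemma~\ref{lem:LBInterp}, yield a Dirichlet polynomial $R_{a,j}(t)\geq N_{a,j}(t)$ with $\tfrac{1}{T}\int_T^{2T} R_{a,j}(t)^k\,dt \ll_\b e^{-K_j/C_\b}$ for $k=1,2$.

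The second step is to expand $\prod_{j\leq\ell} N_{1,j}(t)N_{2,j}(t)$ using this decomposition, separating the ``all good'' contribution from cross terms involving one or more $R_{a,j}$. For the principal piece $\prod_{j\leq\ell}|\n_j(\tfrac12+it+ih_1;\b)\n_j(\tfrac12+it+ih_2;\b)|^2$, which is a Dirichlet polynomial in $t$ of length at most $T^{1/18}$, Lemma~\ref{lem:Splitting} reduces the mean to a product over $j$ of single-scale means. The mean value theorem for Dirichlet polynomials (theorem~9.1 of~\cite{IK}) then gives
\[
\tfrac{1}{T}\!\int_T^{2T}\!|\n_j(\tfrac12+it+ih_1;\b)\n_j(\tfrac12+it+ih_2;\b)|^2 dt \ll \!\!\prod_{T_{j-1}<p\leq T_j}\!\!\left(1 + \frac{2\b^2(1+\cos((h_1-h_2)\log p))}{p} + O\!\left(\tfrac{1}{p^2}\right)\right),
\]
using that $b_{h_1,h_2,j}(p) = \b\,\sigma_{ih_1,ih_2}(p)$ together with Rankin's trick to remove the $\Omega(n)\leq K_j$ constraint. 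Cross terms involving any $R_{a,j}$ are controlled by Cauchy--Schwarz: each such factor contributes a saving of $e^{-K_j/C_\b}$, and the sum over patterns of ``bad'' scales converges since the $K_j$ grow quickly.

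Combining the single-scale products and invoking Lemma~\ref{lem:cosPrimeSum} together with Mertens' estimate yields a bound of order $(\log T_\ell)^{2\b^2}|\zeta(1+1/\log T_\ell+i(h_1-h_2))|^{2\b^2}$, matching the claim since $\log T_\ell \asymp \log T$. The principal obstacle is the bad-regime analysis: the exponent $2\b/(2\b-1)$ in $N_{a,j}$ blows up as $\b\to 1^+$, so the construction of $R_{a,j}$ must absorb this exponent into $\b$-dependent constants. This is precisely why the implied constant in the proposition depends on $\b$, and why $K_j$ is taken proportional to $\b^2 P_j$: the extra factor of $\b^2$ provides enough slack in $K_j$ to dominate the amplification of moments caused by the $2\b/(2\b-1)$ exponent in the bad case.
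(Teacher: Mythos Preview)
Your approach is the same as the paper's and is correct. One correction to your closing diagnosis: the exponent $2\b/(2\b-1)$ does \emph{not} blow up as $\b\to 1^{+}$; it equals $2$ at $\b=1$ and decreases monotonically toward $1$ as $\b\to\infty$. This is exactly why the paper calls the $\b\geq 1$ case ``a bit simpler'': in the bad regime $|\p_j|>K_j/10$ one has $\sum_{r\leq K_j}(\b|\p_j|)^r/r!\geq 1$, so
\[
|\n_j(\cdot;\a)|^{2\b/(2\b-1)} \;\leq\; \Bigl(\sum_{r\leq K_j}\tfrac{(\b|\p_j|)^r}{r!}\Bigr)^{2} \;\leq\; \Bigl(\tfrac{12\b|\p_j|}{K_j}\Bigr)^{2K_j}
\]
for both $\a=\b-1$ and $\a=\b$, yielding a single-term $R_{a,j}$ with no trailing sum over $r$ (compare $Q_{a,j}$ in Lemma~\ref{lem:LBInterp}). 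The factor $\b^2$ in $K_j=250\b^2 P_j$ is there to absorb the $\b^{\Omega(n)}$ weighting in the Dirichlet coefficients when applying Rankin's trick and the moment bounds for $R_{a,j}$, not to tame the H\"older exponent.
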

\noindent
This is a lot like Proposition \ref{prop:lowerDPMVB<1}, but the proof is even simpler.
The analog of Lemma \ref{lem:LBInterp} is

\begin{lemma}\label{lem:b>1LBInterp}
For $j\leq \ell$
\begin{align*}
N_1,j(t) N_{2,j}(t) \leq  
|\n_j(\tfrac{1}{2} + i t &+ i h_1; \b) \n_j(\tfrac{1}{2}+ i t + i h_2; \b)|^2 (1+O(e^{-K_j/10})) \\
 + O\big(R_{1,j}(t) |\n_j(\tfrac{1}{2}&+ i t + i h_2; \b)|^2(1+O(e^{-K_j/10})) \\
+ R_{2,j}(t) |\n_j(\tfrac{1}{2}+ &i t + i h_2; \b)|^2(1+O(e^{-K_j/10}))\\
 +  &R_{1,j}(t) R_{2,j}(t) \big),
\end{align*} 
where for $a = 1,2$
\[
R_{a,j}(t) := \left(\frac{12 \b|\p_j(\tfrac{1}{2} + i t + i h_a)|}{K_j}\right)^{2 K_j} .
\]
\end{lemma}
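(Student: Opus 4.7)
The plan is to mirror the proof of Lemma \ref{lem:LBInterp} (which followed Lemma 1 of \cite{HS}), with the simplification that the Hölder exponent $\tfrac{2\b}{2\b-1}$ lies in $[1,2]$ for $\b \geq 1$. First I would perform, for each $a\in\{1,2\}$, a dichotomy on the size of $|\p_j(\tfrac{1}{2}+it+ih_a)|$: call the shift \emph{good} if $|\p_j(\tfrac{1}{2}+it+ih_a)| \leq K_j/(10\b)$ and \emph{bad} otherwise. The lemma amounts to a bound on $N_{1,j}(t) N_{2,j}(t)$ in each of the four sub-cases, and the stated inequality is obtained by writing the product as a sum over these sub-cases (after multiplying the bad-case bounds by the trivial factor $1 = \mathbf{1}_{\text{bad}}$).

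Case 1 (both good). Here I would use the standard tail estimate that when $|\b X| \leq K_j/10$ the partial Taylor sum $\sum_{m\leq K_j}(\b X)^m/m!$ agrees with $e^{\b X}$ up to a factor $1+O(e^{-K_j/10})$. Applied simultaneously to $\b-1$ and $\b$, this yields $\n_j(s;\b-1)\n_j(s;\b) = \exp((2\b-1)\p_j(s))(1+O(e^{-K_j/10}))$, so raising to the exponent $\tfrac{2\b}{2\b-1}$ gives $|\n_j(s;\b-1)\n_j(s;\b)|^{\frac{2\b}{2\b-1}} = \exp(2\b\,\Re\p_j(s))(1+O(e^{-K_j/10}))$. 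Applying the same approximation backwards to $|\n_j(s;\b)|^2$ recovers the main term of the lemma with the claimed error. Here I am using that $\tfrac{2\b}{2\b-1}\leq 2$ to keep the error $O(e^{-K_j/10})$ absorbed.

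Case 2 (at least one bad). In this regime I want an unconditional pointwise bound. I would use the crude estimate $\sum_{m\leq K}|\b X|^m/m! \leq (C\b|X|/K)^K$ for $|\b X|>K/10$ (which follows by bounding each term by its maximum, taken at $m=K$, times a geometric-like sum; absorbing constants gives $C = 12$ to match \cite{HS}). Applied to both $\n_j(s;\b-1)$ (using $|\b-1|\leq \b$ since $\b\geq 1$) and $\n_j(s;\b)$, this produces $|\n_j(s;\b-1)\n_j(s;\b)|\leq (12\b|\p_j(s)|/K_j)^{2K_j}$, and since $\tfrac{2\b}{2\b-1}\leq 2$ the same bound survives after raising to the Hölder exponent (with a harmless constant I would absorb). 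This is exactly $R_{a,j}(t)$ on the bad shift. On the other shift, if it is good I pair it with $|\n_j(\tfrac{1}{2}+it+ih_{a'};\b)|^2(1+O(e^{-K_j/10}))$ from Case~1; if it is bad I pair it with $R_{a',j}(t)$.

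The main obstacle is bookkeeping rather than new input: I need to verify that the constant $12$ in $R_{a,j}$ is robust after raising to the Hölder exponent $\tfrac{2\b}{2\b-1}$, uniformly in $\b\geq 1$. Because this exponent sits in $[1,2]$, the factor $(12\b|\p_j|/K_j)^{2K_j\cdot \frac{2\b}{2\b-1}/2}$ is dominated by $(12\b|\p_j|/K_j)^{2K_j}$ up to an extra constant that I can absorb into the $O(\cdot)$, using that $|\p_j| > K_j/(10\b)$ already produces a fixed gain per unit of $K_j$. Summing the four sub-cases—(good, good), (good, bad), (bad, good), (bad, bad)—then yields precisely the four terms on the right-hand side of the claimed inequality, finishing the proof.
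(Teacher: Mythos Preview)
Your strategy---split each shift into ``good'' ($|\p_j|$ small) and ``bad'' ($|\p_j|$ large), use the Taylor-tail approximation in the good case, and a crude pointwise bound in the bad case---is exactly the paper's, and your good-case treatment is correct.

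In the bad case your exponent bookkeeping slips. After bounding $|\n_j(s;\b-1)\n_j(s;\b)|\leq (12\b|\p_j|/K_j)^{2K_j}$ and then raising to the power $\tfrac{2\b}{2\b-1}\in(1,2]$, the exponent becomes $2K_j\cdot\tfrac{2\b}{2\b-1}\in[2K_j,4K_j]$; since the base exceeds $1$ this is \emph{not} dominated by $(12\b|\p_j|/K_j)^{2K_j}$, and the stray ``$/2$'' in your final paragraph has no justification. The paper avoids this by reversing the order of operations: it first passes to the nonnegative majorant $S:=\sum_{r\leq K_j}(\b|\p_j|)^r/r!\geq 1$, uses $S^{\,2\b/(2\b-1)}\leq S^2$ (valid precisely because $S\geq 1$), and \emph{only then} applies the crude bound $S\leq(12\b|\p_j|/K_j)^{K_j}$. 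This bounds each factor $|\n_j(s;\a)|^{2\b/(2\b-1)}$ directly by $(12\b|\p_j|/K_j)^{2K_j}$. Your threshold $K_j/(10\b)$ in place of the paper's $K_j/10$ is harmless either way since $\b\geq 1$. None of this affects the downstream application, since Lemma~\ref{lem:b>1LBInterpErrorControl} controls $R_{a,j}^2$ as well as $R_{a,j}$.
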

\begin{proof}
Following the proof of Lemma \ref{lem:LBInterp}, if $|\p_j(\tfrac{1}{2} + i t + i h_a)|\leq K_j/10$, then we may write 
\begin{align*}
N_{a,j}(t) = |\n_j(\tfrac{1}{2} + i t + i h_a; \b)|^2 (1+O(e^{-K_j/10})).
\end{align*} 
In the case $|\p_j(\tfrac{1}{2} + i t + i h_a)| > K_j/10$ for some $a$ then simply note
for $\a = \b-1$ or $\a= \b$
\begin{align*}
|\n_j(\tfrac{1}{2} &+ i t + i h_a;\a)|^{\tfrac{2\b}{2\b-1}} \leq \left( \sum_{r = 0}^{K_j} \frac{\b^r |\p_j(\tfrac{1}{2}+it + i h_a)|^r}{r!}\right)^{\tfrac{2\b}{2\b-1}} \\
&\leq  \left( \sum_{r = 0}^{K_j} \frac{\b^r |\p_j(\tfrac{1}{2}+it + i h_a)|^r}{r!}\right)^{2} \leq \left(\frac{12\b|\p_j(\tfrac{1}{2}+it + i h_a)|}{K_j}\right)^{2K_j},
\end{align*}
where we have used that $\b \geq 1$ and that $|\p_j(\tfrac{1}{2}+it + i h_a)|$ is large.
The claim now follows.
\end{proof}

\noindent
The analog of Lemma \ref{lem:LBInterp} is

\begin{lemma}\label{lem:b>1LBInterpErrorControl} For $a = 1,2$
\[
\frac{1}{T} \int_T^{2T} R_{a,j}(t) dt \ll e^{-K_j},  \quad  \frac{1}{T} \int_T^{2T} R_{a,j}(t)^2 dt \ll e^{-K_j}.
\]
\end{lemma}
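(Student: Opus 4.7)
The plan is to reduce both estimates to routine applications of the mean-value theorem for Dirichlet polynomials, in the spirit of Lemma~\ref{lem:LBInterpErrorControl} but considerably simpler since $R_{a,j}(t)$ is now a single power of $|\p_j|$ with no inner sum over $r$.

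The starting observation is that, writing $D_j(s) := \sum_{T_{j-1} < p \le T_j} p^{-s}$, one has $\p_j(\tfrac{1}{2}+it+ih_a) = \Re D_j(\tfrac{1}{2}+it+ih_a)$, so
\[
R_{a,j}(t) \;\le\; \Bigl(\frac{12\b}{K_j}\Bigr)^{2K_j}\bigl|D_j(\tfrac{1}{2}+it+ih_a)^{K_j}\bigr|^2.
\]
The Dirichlet polynomial $D_j^{K_j}$ is supported on integers $n$ with $\Omega(n)=K_j$ composed of primes from $(T_{j-1},T_j]$ and has coefficients $K_j!\,g(n)$; by the choice of parameters its length $T_j^{K_j}$ is a small power of $T$ uniformly in $j\le\ell$. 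I would then invoke the mean-value theorem for Dirichlet polynomials (Theorem~9.1 of \cite{IK}) to obtain
\[
\frac{1}{T}\int_T^{2T}\bigl|D_j(\tfrac{1}{2}+it+ih_a)\bigr|^{2K_j}\,dt \;\ll\; (K_j!)^2 \sum_{\substack{\Omega(n)=K_j \\ p\mid n \,\Rightarrow\, p\in(T_{j-1},T_j]}} \frac{g(n)^2}{n} \;\le\; K_j!\,P_j^{K_j},
\]
where the final inequality uses $g(n)\le 1$ together with the multinomial identity $P_j^{K_j}=K_j!\sum g(n)/n$.

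Combining these bounds with Stirling would give
\[
\frac{1}{T}\int_T^{2T} R_{a,j}(t)\,dt \;\ll\; \sqrt{K_j}\,\Bigl(\frac{144\b^2 P_j}{e K_j}\Bigr)^{K_j} \;=\; \sqrt{K_j}\,\Bigl(\frac{144}{250\,e}\Bigr)^{K_j} \;\ll\; e^{-K_j},
\]
after substituting $K_j=250\b^2 P_j$ and noting $144/(250e) < e^{-3/2}$. The second bound I would handle identically by applying the mean-value theorem to $D_j^{2K_j}$, which yields $(2K_j)!\,P_j^{2K_j}$ in place of $K_j!\,P_j^{K_j}$; Stirling then produces a numerical base $288/(250\,e)<1$, comfortably giving $\ll e^{-K_j}$. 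No real obstacle is foreseen: the one detail to verify is that the lengths $T_j^{2K_j}$ remain a small enough power of $T$ uniformly in $j\le\ell$ for the error term in the mean-value theorem to be negligible, and this is immediate from the parameter choices at the top of the subsection.
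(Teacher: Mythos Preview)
Your proposal is correct and is exactly the argument the paper has in mind: the paper omits the proof, noting only that it is a simpler version of Lemma~\ref{lem:LBInterpErrorControl}, and your reduction to the mean-value theorem for $D_j^{K_j}$ and $D_j^{2K_j}$ followed by Stirling is precisely that simplification. The only cosmetic point is that for the second estimate the relevant numerical check is $(288/(250e))^2 e<1$ rather than merely $288/(250e)<1$, but this holds comfortably.
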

\noindent
The proof is a simpler version of  Lemma \ref{lem:LBInterp}, and is omitted.
Now we may conclude the proof of Proposition \ref{prop:DPMVB>1} when $\b \geq 1$ by combining Lemma \ref{lem:b>1LBInterp}, Lemma \ref{lem:b>1LBInterpErrorControl} and Lemma \ref{lem:Splitting} with the mean value theorem for Dirichlet polynomials as we did in the $\b \leq 1$ case.
This concludes the proof of \ref{prop:DPMVB>1}, hence the proof of Theorems \ref{thm:mainLowerBound}  and \ref{thm:shiftedMoments}.

\end{document}